\theoremstyle{plain}
\newtheorem{theorem}{Theorem}[section]
\newtheorem{lemma}[theorem]{Lemma}
\newtheorem{proposition}[theorem]{Proposition}
\newtheorem{corollary}[theorem]{Corollary}
\newtheorem{definition}[theorem]{Definition}
\theoremstyle{remark}
\newtheorem{remark}{Remark}[section]
\newtheorem{example}{Example}[section]
\newtheorem*{acknowledgment}{Acknowledgment}
\numberwithin{equation}{section}
\newcommand{\bK}{\mathbb{K}}
\newcommand{\bP}{\mathbb{P}}
\newcommand{\cX}{\mathcal{X}}
\newcommand{\cP}{\mathcal{P}}
\newcommand{\cR}{\mathcal{R}}
\newcommand{\cI}{\mathcal{I}}
\newcommand{\cE}{\mathcal{E}} 
\newcommand{\bfa}{\mathbf{a}}
\newcommand{\bfb}{\mathbf{b}}
\newcommand{\Aut}{\mathrm{Aut}}
\newcommand{\Bij}{\mathrm{Bij}}
\newcommand{\Map}{\mathrm{Map}}
\newcommand{\im}{\mathrm{im}}
\newcommand{\id}{\mathrm{id}}
\newcommand{\pr}{\mathrm{pr}}
\newcommand{\dom}{\mathrm{dom}}
\newcommand{\loc}{\mathrm{loc}}
\newcommand{\book}{\mathrm{bookk}}
\newcommand{\Bigsetof}[2]{\begin{Bmatrix} #1 \,\Big|\, #2 \end{Bmatrix}}
\newcommand{\inv}{^{-1}}
\newcommand{\ssk}{\smallskip}
\newcommand{\nin}{\noindent}
\newcommand{\All}{\mathbf{O}}
\newcommand{\Dia}{\mathbf{I}}
\newcommand{\Gr}{\mathrm{Graph}} 
\begin{document}

\title[Universal Associative  Geometry]{Universal Associative Geometry}

\author{Wolfgang Bertram}

\address{Institut \'{E}lie Cartan de Lorraine (IECL) \\
Universit\'{e} de Lorraine, CNRS, INRIA, \\
Boulevard des Aiguillettes, B.P. 239, \\
F-54506 Vand\oe{}uvre-l\`{e}s-Nancy, France}

\email{\url{wolfgang.bertram@univ-lorraine.fr}}

\subjclass[2010]{
08A02  	
16Y30  	
18B10  	
20L05  	
58H05  	
20N10  
17C37  
}

\keywords{
associoid, bisection, (pre)groupoid, principal equivalence relation (principal bundle), (semi)torsor, binary relation, 
relational composition, universal algebra
}

\begin{abstract}
We generalize parts of the theory of {\em associative geometries} developed in
\cite{BeKi10, BeKi10b, Be12} in the framework of universal algebra:
we prove that certain {\em associoid} structures, such as {\em pregroupoids} and 
{\em principal equivalence relations},  have a natural prolongation from a set $\Omega$
to the power set o $\Omega$.
We reinvestigate  the case of {\em homogeneous pregroupoids} 
(corresponding to the {\em projective geometry of a group}, see \cite{Be12}) from the point of
view of {\em pairs of commuting principal equivalence relations}.
We use  the ternary approach to groupoids developed by Kock \cite{Ko05, Ko07}, and 
the torsors defined by our construction  can be seen
as a generalisation of the known {\em groups of bisections of a groupoid}. 
\end{abstract}

\maketitle

\setcounter{tocdepth}{1}
\tableofcontents

\section*{Introduction}

\subsection{Associativity and universal algebra.}
The associative law plays a special r\^ole in universal algebra:
on the one hand, associative structures such as groups, groupoids, semigroups  and associative algebras are {\em objects}  
studied by means of  universal algebra, on the same level as, e.g., Lie- or Jordan algebraic structures (see \cite{Cohn});
but on the other hand, the associative law is  foundational  for the topic of  universal algebra itself --
coming in via {\em composition} of mappings and  of binary relations, via {\em lattices} and {\em set-theory
 (intersections and unions)}.
Thus associativity belongs both to the ``input'' and to the ``output'' side of  universal algebra.

\ssk
In a likewise way, when developing with M.\ Kinyon the theory of {\em associative geometries} (\cite{BeKi10, BeKi10b,
BeKi12}), our first aim
was to define a geometric {\em object} (``associative
coquecigrue'') corresponding to associative algebras in a similar way as Lie groups correspond
to Lie algebras. However, in subsequent work \cite{Be12}, it became more and more visible that much of this
approach really belongs to {\em methods of universal algebra}, and thus
there ought to be a {\em universal  geometric algebra of associativity}. 
The aim of the present work is to explain what I mean by
this. In a nutshell, it is a {\em  geometric language to describe the duality between the power set $\cP(\Omega)$ of
a set  $\Omega$
and the set $\cE(\Omega)$ of equivalence relations on $\Omega$, in analogy with the duality between a projective
space $\bP(W)$ and its dual projective space $\bP(W^*)$}.
Indeed, when coming accross  the following phrase (\cite{Cohn}, 
p. 16): ``the quotient sets of a given set $A$ are to some extent dual to the subsets of $A$, but this
duality is not complete'', I had the feeling that this remark  is an important guidepost,
and that it should be worth understanding  where it leads, when following the path to its end. 

\subsection{Terminology} 
Before explaining our results, it will be useful to fix some terminology (see 
Appendix \ref{APP} for formal definitions).
Motivated by \cite{BeKi10, BeKi10b}, and by work of Kock \cite{Ko82, Ko87, Ko05, Ko07}, we work with
{\em ternary} and possibly {\em partially defined} product maps (which we often denote
by $(xyz)$ or $[xyz]$ and Kock denotes in most papers
by $xy\inv z$), so the most general structure of this kind is a {\em semi-associoid}, a set $M$ with a partially defined
product map satisfying the 
{\em identity of para-associativity}
\[
(xy(zuv))= (x(uzy)v)=((xyz)uv)\,   \tag{PA}
\]
and which  is called an {\em associoid} if it satisfies, moreover,  the 
\emph{idempotent law} 
\[
(xxy)=y, \quad (wzz)=w  \, . \tag{IP}
\]
According to the nature of the domain $D \subset M^3$ of definition of the product map, the following more or less
classical associative objects are defined, summarized by the following diagram
of structures (categories, in fact) and their inclusions\footnote{There is a serious problem of terminology: 
for most of these objects, there is no universally accepted term, und we are well-aware that the terminology we use
here may be unsatisfying. We hope that, in a not too distant future, the mathematical community will agree on a better terminology, once
the importance of ternary product structures is more widely recognized.}: 
$$
\begin{matrix}
\mbox{\em  semi-associoid } & \supset & \mbox{\em associoid }
\cr
 \cup & & \cup 
 \cr
\mbox{\em semi-pregroupoid } & \supset & \mbox{\em pregroupoid }
\cr
 & & \cup 
 \cr
\cup  &  & \mbox{\em (left or right) principal equivalence relation (prev) }
\cr
   & & \cup 
 \cr
\mbox{\em semi-torsor (semi-pregroup) } & \supset & \mbox{\em torsor (pregroup) }
\end{matrix}
$$
Objects in the last line have {\em everywhere defined} products: $D = M^3$. 
A {\em group} is nothing but a torsor with some fixed base point $y$ (unit element) and product
$xz = (xyz)$, and a {\em groupoid}
is nothing but a pregroupoid with some fixed bisection (set of units).
The notion of {\em (left or right) prev} is an abstract-algebraic version of the one of {\em principal bundle}, 
stripped off the usual topological  conditions. 
The domain $D$ for (semi-)pregroupoids is defined in terms of {\em two equivalence relations $a,b$}, the fibers of
the two projections ``target'' and ``domain'':
\begin{equation}
\begin{matrix} M/a & \leftarrow & M & \rightarrow & M/b \end{matrix} \, ,
\end{equation}
whereas for left or right prev's, just one of the two projections suffices to define $D$.

\subsection{First stage: naked sets, and
composition of  binary relations}
We fix a ground set $\Omega$ throughout this work.
{\em Duality} between the power set $\cX:=\cP(\Omega)$ and the set of equivalence relations $\cX':=\cE(\Omega)$ 
means that $\cX'$ ``structurizes'' or ``coordinizes'' $\cX$, and vice versa -- but this duality is not complete!
For instance, 
\begin{itemize}
\item
there is a natural {\em remoteness} or {\em transversality} relation for pairs
$(x,a) \in \cP(\Omega) \times \cE(\Omega)$ and for pairs
$(a,b) \in \cE(\Omega) \times \cE(\Omega)$ -- but not for pairs $(x,y)$ of subsets,
\item
we may {\em compose relations} (relational composition) -- but we cannot  ``compose sets''. 
\end{itemize}

\nin
Our first result (Theorem \ref{th:semitorsor1}) says that the second claim is not quite true: we can ``compose sets'', 
{\em provided given a fixed pair $(a,b)$ of commuting equivalence relations}.
Statement and proof of this result are elementary and go back to the very definition of relational composition:
start by observing that usual relational composition $x\circ y\inv \circ z$ of subsets $x,y,z \subset \Omega$ 
would be well-defined if there were given a direct product structure $\Omega = \Omega_1 \times \Omega_2$, but it would of course
depend on this direct product structure.
Next, observe that a direct product structure on $\Omega$ can be described by
{\em two commuting transversal equivalence relations $a,b$}, namely the fibers of the two projections $\pr_1, \pr_2$
(Lemma \ref{la:transv2}). Then rewrite the definition of relational composition $xy\inv z$ in terms of $x,a,y,b,z$; you get
\begin{equation}\label{eqn:0.1} 
x y\inv z =
(xyz)_{ab}^\book: =
\Bigsetof{\omega \in \Omega}
{\begin{array}{c}
\exists \xi \in x,
\exists \eta \in y,
\exists \zeta \in z : \\
\omega \sim_a \zeta, \quad
\eta \sim_a \xi,\quad
\omega \sim_b \xi, \quad
\eta \sim_b \zeta 
\end{array}} .
\end{equation}
It is truly remarkable that the  same formula still defines a para-associative ternary composition
if $a,b$ commute but  are no longer transversal, nor even everywhere defined:
it suffices that $(a,b)$ be a {\em pair of commuting equivalence relations ``in'' $\Omega$} to obtain
a semitorsor structure on $\cP(\Omega)$, which we call  the {\em book-keeping semitorsor}
because this structure underlies the stronger structures to be defined later.
The definition of the set $(xyz)_{ab}^\book$ may be described by the following ``parallelogram figure''

\begin{center}
\psset{xunit=0.5cm,yunit=0.5cm,algebraic=true,dotstyle=o,dotsize=3pt 0,linewidth=0.8pt,arrowsize=3pt 2,arrowinset=0.25}
\begin{pspicture*}(-5.3,-2.00)(19.46,2.5)
\psline(0.36,1.12)(3.7,1.16)
\psline(0.36,1.12)(0.36,-1.14)
\psline(0.36,-1.14)(3.74,-1.12)
\psline(3.7,1.16)(3.74,-1.12)
\pscircle(3.7,1.16){0.25}
\pscircle(3.74,-1.12){0.25}
\pscircle(0.36,1.12){0.25}
\pscircle(0.36,1.12){0.25}
\rput[tl](-5.02,0.56){$ (xyz)_{ab}^\book =$}
\psline(6.86,1.28)(6.84,-1.14)
\psline(6.84,-1.14)(9.82,-1.12)
\psline(9.82,-1.12)(9.78,1.26)
\psline(6.86,1.28)(9.78,1.26)
\pscircle(6.86,1.28){0.25}
\pscircle(6.84,-1.14){0.25}
\pscircle(9.82,-1.12){0.25}
\psline(13.08,1.42)(16.56,1.42)
\psline(16.56,1.42)(16.58,-1.14)
\psline(13.08,1.42)(13.1,-1.18)
\psline(13.1,-1.18)(16.58,-1.14)
\pscircle(16.56,1.42){0.25}
\pscircle(16.58,-1.14){0.25}
\pscircle(13.1,-1.18){0.25}
\rput[tl](5.2,0.36){$=$}
\rput[tl](11.46,0.38){$=$}
\begin{scriptsize}
\psdots[dotstyle=*,linecolor=blue](0.36,1.12)
\psdots[dotstyle=*,linecolor=blue](3.7,1.16)
\psdots[dotstyle=*,linecolor=blue](0.36,-1.14)
\rput[bl](0.06,-1.54){{$\omega$}}
\psdots[dotstyle=*,linecolor=blue](3.74,-1.12)
\rput[bl](4.2,1.66){$y$}
\rput[bl](4.38,-1.78){$z$}
\rput[bl](0.32,1.88){$x$}
\psdots[dotstyle=*,linecolor=blue](6.86,1.28)
\psdots[dotstyle=*,linecolor=blue](6.84,-1.14)
\psdots[dotstyle=*,linecolor=blue](9.82,-1.12)
\psdots[dotstyle=*,linecolor=blue](9.78,1.26)
\rput[bl](9.86,1.38){{$\omega$}}
\rput[bl](6.98,1.94){$z$}
\rput[bl](6.32,-1.96){$y$}
\rput[bl](10.22,-1.72){$x$}
\psdots[dotstyle=*,linecolor=blue](13.08,1.42)
\rput[bl](13.16,1.54){{$\omega$}}
\psdots[dotstyle=*,linecolor=blue](16.56,1.42)
\psdots[dotstyle=*,linecolor=blue](16.58,-1.14)
\psdots[dotstyle=*,linecolor=blue](13.1,-1.18)
\rput[bl](17.22,1.36){$z$}
\rput[bl](17.08,-1.7){$y$}
\rput[bl](12.58,-1.84){$x$}
\end{scriptsize}
\end{pspicture*}
\end{center}

\nin where sets $x,y,z$ are indicated by circles, elements $\xi,\eta,\zeta$ by solid points inside these circles;
 a horizontal link indicates that the points are in relation $a$, and a vertical link that they are in relation $b$. 
This kind of presentation and of 
results  can be extended to general $n$-ary composition, and in particular to binary composition;
and it should also be possible to generalize it to a general categorical framework (``allegories'', see
\cite{FS90} or  \cite{Jo02}  Section A3). We will not do this here, but we try to use from the very beginning  assumptions and
notations that are adapted to possible ``allegorical'' generalizations.

\subsection{Second stage: pregroupoids and their (local) bisections}
Now, our naked set will get dressed:
we assume that its underwear is a semi-pregroupoid structure $(\Omega,a,b,[\quad])$.
Then the preceding result generalizes in the following way (Theorem \ref{th:semitorsor2}):
{\em the power set $\cP(\Omega)$ becomes a semitorsor when equipped with the everywhere defined
ternary law given by
\begin{equation}\label{eqn:0.3}
(xyz)_{ab}  :=
\Bigsetof{\omega \in \Omega}
{\begin{array}{c}
\exists \xi \in x,
\exists \eta \in y,
\exists \zeta \in z : \\
\eta \sim_a \xi,\quad
\eta \sim_b \zeta ,
\quad \omega = [\xi \eta \zeta] 
\end{array}}  ,
\end{equation}
and, moreover,}

\begin{enumerate}
\item
{\em with its natural equivalence relations $\bfa,\bfb$ induced by $a,b$, $(\cP(\Omega),(\quad)_{ab})$
becomes itself a semi-pregroupoid,
\item
the {\em space $U_{ab}^\loc$ of local bisections of $(a,b)$} is a sub semi-pregroupoid of the preceding.}
\end{enumerate}
 
\nin
Now
let us dress up our set $\Omega$ a bit more: assume it is a pregroupoid, that is, it satisfies moreover the idempotent law.
This will not imply that $(\cP(\Omega),(\quad)_{ab})$ satisfies the idempotent law, but  we have
the following (Theorem \ref{th:torsor1}):
{\em the set $U_{ab}^\loc$ of local bisections becomes a pregroupoid, and the set $U_{ab}$ of global bisections becomes
a torsor, as summarized by the following diagram:}
$$
\begin{matrix}
\mbox{\em semi-pregroupoid } (\cP(\Omega),\bfa,\bfb,(\quad)_{ab}) & \supset & \mbox{\em pregroupoid } 
(U_{ab}^\loc,\bfa,\bfb,(\quad)_{ab})
\cr
\cup  & & \cup 
 \cr
\mbox{\em semi-torsor } (\cP(\Omega),(\quad)_{ab}) & \supset & \mbox{\em torsor  } (U_{ab},(\quad)_{ab})
\end{matrix}
$$
\nin
These torsor and semi-torsor structures generalize some more classical objects:
if the pregroupoid $\Omega$ is in fact a groupoid, then, as stated in \cite{CW}, p.\ 106, {\em  the set $2^\Omega$
carries a semigroup structure} (corresponding to our semigroup $(x,z)\mapsto (xyz)_{ab}$, where $y$ is the bisection
of units), {\em having several interesting sub-semigroups}, among them the sub-semigroup of local bisections and the
group of bisections. 
The case of two transversal equivalence relations ($a \top b$) leads right back to the case of a naked set with
ordinary composition: indeed, the {\em pair pregroupoid} on $(E ,F)= (\Omega/a,\Omega/b)$ is given by purely set-theoretic data,
and the diagram above reduces to
$$
\begin{matrix}
\mbox{\em semi-pregroupoid } (\cR(E,F),\bfa,\bfb)  & \supset & \mbox{\em pregroupoid of local bijections } E \to F 
\cr
\cup  & & \cup 
 \cr
\mbox{\em semi-torsor } \cR(E,F) & \supset & \mbox{\em torsor of global bijections } E \to F
\end{matrix} 
$$
\nin where the space $\cR(E,F)$ of binary relations is equipped with its ``usual'' ternary product
$x y\inv z$.
Thus one may say that pregroupoid-like structures on $\Omega$ extend, in a canonical way,
to structures of the same kind on $\cP(\Omega)$.  Of course, this offers the possibility to repeat such constructions
on the level of higher order structures $\cP(\cP(\Omega))$, and so on.

\subsection{Third stage: commuting  principal equivalence relations} 
We continue to dress up our set by assuming that $(\Omega,a,b,[\quad])$ is a {\em pair of commuting
left and right prev's}.
Essentially, this means that the partially defined ternary map $[\quad]$ now lives on the union of
$a \times \Omega)$ and  $\Omega\times b$, and not only on their intersection, as for pregroupoids.
This has some nice additional consequences (Section \ref{sec:distrib}):
first of all, not only the space $U_{ab}$ of bisections, but also the spaces
$U_a,U_b$ of sections of $a$, resp.\ of $b$, carry canonical group structures (which we denote by $+$,
although they need not be commutative), and $U_{ab}$ acts on these groups from both sides,
such that {\em one-sided distributivity laws} hold. 
The whole structure thus resembles strongly to what one calls a {\em near-ring} (see \cite{Pi77}).
It is, then, possible to describe this object by quite explicit formulae (involving a {\em canonical kernel},
generalizing  the {\em Bergman operators}, which in turn define
the {\em Bergman kernels} appearing in the theory of bounded symmetric domains),
making it amenable to methods borrowed from the theory of associative or Jordan algebras. 

\subsection{Fourth stage: homogeneous pregroupoids}
This is the royal dress:
$\Omega$ is assumed to be a group, in which we fix two subgroups $A$ and $B$, inducing two commuting
left and right prev's $a$ and $b$ whose fibers are right cosets of $A$ and left  cosets of $B$:
\begin{equation}
\begin{matrix}
\Omega /B & \leftarrow & \Omega & \rightarrow & A\backslash \Omega \end{matrix}
\end{equation}
In this case, the results described above are precisely those presented in \cite{Be12}:
writing $\Omega$ additively (but not assumed to be commutative), the ternary law is given by
$[\xi \eta \zeta ]=\xi -\eta + \zeta$, and so (\ref{eqn:0.3}) becomes
\begin{equation}\label{eqn:str}
(xyz)_{ab} = 
\Bigsetof{\omega \in \Omega}
{\begin{array}{c}
\exists \xi \in x,
\exists \eta \in y,
\exists \zeta \in z,\exists \alpha \in A, \exists \beta \in B : \\
\eta = \alpha + \xi, \quad
\eta = \zeta + \beta , \quad
\omega = \xi - \eta + \zeta 
\end{array}} 
\end{equation}
The set $(xyz)_{ab}$ has been denoted by $\Gamma(x,a,y,b,z)$, and the
 three equations appearing here have been called the {\em structure equations} in \cite{Be12}.
Comparison with loc.\ cit.\  shows that many proofs  are now greatly  simplified, due to the 
conceptual approach, but, on the other hand, some results are specific to the group case, namely,
the presence of additional symmetries or the possibility of certain anti-symmetries  -- these items
are very important, and we are going to add some comments on this.

\subsection{Further topics; duality}
Once the viewpoint of groupoids and ternary products (pre-groupoids, prev's) is adopted, the
results presented here are completely natural, or (what is more or less  the same)  ``trivial'': 
they are just an unfolding of logical consequences of  definitions.
However, taking this impression to be the whole story  would mean to miss the main point of the present approach.
Indeed, it seems as if there were some ``quantization effect'':
the viewpoint presented here is ``classical'' and fits completely with general, possibly non-transitive, 
pregroupoids; however, the more the pregroupoid becomes homogeneous, by a sort of 
``quantization effect'', the structure becomes richer and more rigid due to symmetries that were not present
in the general case. Indeed, these symmetries are responsible for the particular 
``geometric flavor'' of the theory -- in particular, for the most important (in our opinion) feature: {\em duality},
and relating all this to exceptional structures (cf.\ \cite{BeKi12}) and Jordan geometries (cf.\ \cite{Be13}).
This is indeed topic for further research; we say some words about this in the last section.

\begin{acknowledgment}
I thank Anders Kock for helpful remarks, and in particular, for pointing out to me his work on pregroupoids.
\end{acknowledgment}

\section{Composition of binary relations  revisited}\label{sec:rels}

\subsection{Binary relations: notation and terminology}
Concerning binary relations,
the following notation and terminology  will be used in all of this work:

\begin{enumerate}
\item
$\Omega,\Omega',\Omega''$  sets considered to be fixed,
\item
$\cP(\Omega)$ the power set (set of subsets) of a set $\Omega$,
\item
$\cR(\Omega,\Omega') := \cP(\Omega \times \Omega')$ set of binary relations between $\Omega$ and $\Omega'$,
\item
{\em graph} of a mapping $f:\Omega \to \Omega'$: 
$\Gr_f := \{ (f(\omega),\omega) \mid \, \omega \in \Omega \} \in \cR(\Omega',\Omega)$
(and every {\em single valued and everywhere defined} relation is of this form),
\item
{\em relational composition} of $b \in \cR(\Omega'',\Omega')$ with  $a \in \cR(\Omega',\Omega)$:
\begin{equation}
\quad b\circ a := ba := \{ (\omega,\alpha) \in \Omega'' \times \Omega \mid \, 
\exists \xi \in \Omega' : \, (\omega , \xi) \in b, (\xi,\alpha) \in a \}
\end{equation}
(so that $\Gr_{g\circ f}=\Gr_g \circ \Gr_f$),
\item
{\em inverse relation} of $a \in \cR(\Omega,\Omega')$:
\begin{equation}
a\inv  := \{ (\alpha,\omega) \mid \, (\omega,\alpha) \in a \} \in \cR(\Omega',\Omega),
\end{equation}
\item
{\em domain} of $a \in \cR(\Omega',\Omega)$:
$\dom (a) = \pr_\Omega (a) = \{ \alpha \in \Omega  \mid \exists \omega \in \Omega'  : (\omega,\alpha)\in a \}$,
\item 
{\em everywhere defined} if $\dom(a) = \Omega$,
\item
{\em image} of $a \in \cR(\Omega',\Omega)$:
$\im (a) = \pr_{\Omega'} (a) = \{ \omega \in \Omega'  \mid \exists \alpha \in \Omega : (\omega,\alpha)\in a \}$,
\item
{\em single valued} if $(\alpha,\omega), (\alpha',\omega) \in a \Rightarrow \alpha = \alpha'$,
\item
{\em intersection} of two relations $a,b \in \cR(\Omega,\Omega')$: 
$a \cap b$ (usual intersection of sets),
\item
{\em natural order} on $\cR(\Omega,\Omega')$: $a \leq b$ if $a \subset b$ (usual inclusion of sets),
\item
$\cR(\Omega):=\cR(\Omega,\Omega)$ set of {\em endorelations} on $\Omega$,
\item
for $x \in \cP(\Omega)$ and $y \in \cP(\Omega')$, we define the {\em all-relation}  by
\begin{equation}
\All_{x,y} := x \times  y \in \cR(\Omega,\Omega'), \qquad
\All_x:= \All_{x,x} = x \times x \in \cR(\Omega), 
\end{equation}
\item
for $x \in \cP(\Omega)$ we define the {\em diagonal of $x$} by 
\begin{equation}
\Dia_x:= \{ (\xi,\xi) \mid \, \xi \in x\} \in \cR(\Omega),
\end{equation}
\item
an endorelation $a \in \cR(\Omega)$ is called:

\ssk
{\em transitive} if $a^2 \subset a$,

  {\em idempotent} if $a^2 = a$,

{\em symmetric} if $a = a\inv$,

{\em image reflexive} if $\Dia_{\im (a)} \subset a$ i.e, $(\omega,\eta) \in a \Rightarrow (\omega,\omega)\in a$,

{\em domain reflexive} if $\Dia_{\dom (a)} \subset a$ i.e, $(\omega,\eta) \in a \Rightarrow (\eta,\eta)\in a$,

{\em reflexive} if $\Dia_\Omega \subset a$, i.e., $(\omega,\omega) \in a$ for all $\omega\in \Omega$,

{\em regular} if $aa \inv a = a$,

{\em equivalence relation on $\Omega$} if it is transitive, symmetric and reflexive,

{\em equivalence relation in $\Omega$} if it is transitive and symmetric. 
\ssk

\item
$\cE^\loc(\Omega)$ set of equivalence relations {\bf in} $\Omega$ (local equivalence relations),
\item
$\cE(\Omega)$ set of equivalence relations {\bf on} $\Omega$,
\item
if $a \in \cE(\Omega)$, we write also $\xi \sim_a \eta$ instead of $(\xi,\eta)\in a$.
The equivalence class of $\omega$ under $a$ is denoted by $[\omega]_a$,
and the canonical projection by
\begin{equation}
\pi_a : \Omega \to \Omega / a,\quad \omega \mapsto [\omega]_a ,
\end{equation}
\item
{\em image of $x \in \cP(\Omega)$ under $a \in \cR(\Omega',\Omega)$}:

$ax := a(x):=  \{ \omega \in \Omega' \mid \exists \xi \in x : (\omega,\xi) \in a \} = \im (a \circ \Dia_x )$,

\item
{\em inverse image} of $x$ under $a$:  $xa:= a\inv x = \dom (\Dia_x \circ a)$.
\end{enumerate}
 
\nin
Note that, if $a$ is an equivalence relation {\bf in} $\Omega$ (so $a=a\inv$, $a^2 \subset a$), then
for all $\xi \in \dom(a)$, we have $(\xi,\xi) \in a$ (proof:  $\exists \eta \in \Omega$:
$(\eta,\xi) \in a$, whence also $(\xi,\eta) \in a$ and $(\xi,\xi) \in a^2 \subset a$),
so $a$ is an equivalence relation {\bf on} $\dom (a)$.

\subsection{(Local) transversality of equivalence relations and sets.}\label{ssec:transv}
An equivalence relation $a$ on $\Omega$ and a subset $x$ of $\Omega$ are called
{\em transversal} if $x$ is a set of representatives of $a$, i.e., $x$ contains exactly one element from
each equivalence class  of $a$. We then write $a \top x$ or $x \top a$.
The set of all systems of representatives of $a$ will be denoted by
\begin{equation}
U_a:= a^\top := \{ x \in \cP(\Omega) \mid \, a \top x \}.
\end{equation}
We say that $a$ and $x$ are {\em locally transversal} (notation $x \top^\loc a$) if $x$ intersects each equivalence class of $a$
at {\em at most one} element: 
$\xi,\xi' \in x, \xi \sim_a \xi' \Rightarrow \xi = \xi'$.
The set of sets that are locally transversal to $a$ will be denoted by
\begin{equation}
U_a^\loc := \{ x \in \cP(\Omega) \mid \, a \top^\loc  x \}.
\end{equation}
If $a,b$ are two equivalence relations on $\Omega$, the set of common systems of representatives, called the
set of {\em bisections of $(a,b)$}, will be denoted by
\begin{equation} \label{eqn:Uab}
U_{ab}:= a^\top \cap b^\top = \{ x \in \cP(\Omega) \mid a \top x, b \top x \} .
\end{equation}
Similarly, the {\em set of local bisections} is defined by
\begin{equation} \label{eqn:Uabloc}
U_{ab}^\loc:= U_a^\loc  \cap U_b^\loc  = \{ x \in \cP(\Omega) \mid a \top^\loc  x, b \top^\loc  x \} .
\end{equation}
To give an equivalent characterisation of (local) transversality,
for any pair $(a,x) \in \cE(\Omega)\times \cP(\Omega)$ define a relation,
called  {\em generalized projection (into $x$ and along $a$)}:
\begin{equation}
P^a_x:= \Dia_x \circ a =  \{ (\xi,\eta)  \mid \, \xi \in x , (\xi,\eta)\in a \} \in \cR(\Omega) .
\end{equation}
The proof of the following lemma is obvious:

\begin{lemma} \label{la:transv1}
A set $x$ is locally transversal to an equivalence relation $a$ if, and only if,
the generalized projection $P^a_x$ is single-valued,
and $a \top x$ iff $P^a_x$ is an operator,  i.e., everywhere defined and single-valued.
\end{lemma}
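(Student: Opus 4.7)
The plan is to unfold each side of the two equivalences directly from the definitions and verify that they line up; there is no real obstacle here, so the proof should be entirely mechanical, but I would organize it in a way that makes the two claims parallel and exploits the fact that $a$ is an equivalence relation on all of $\Omega$ (in particular reflexive) to keep the bookkeeping short.

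First I would rewrite $P^a_x$ pointwise: an ordered pair $(\xi,\eta)$ lies in $P^a_x = \Dia_x \circ a$ precisely when $\xi \in x$ and $\xi \sim_a \eta$. From this I read off the single-valuedness condition: $P^a_x$ is single valued iff for all $\alpha,\alpha'\in x$ and all $\omega\in\Omega$, the conjunction $\alpha\sim_a \omega$ and $\alpha'\sim_a\omega$ forces $\alpha=\alpha'$. Because $a$ is symmetric and transitive, this conjunction is equivalent to $\alpha\sim_a\alpha'$; conversely, using reflexivity, $\alpha\sim_a\alpha'$ always occurs as a pair with some common $\omega$ (take $\omega=\alpha'$, say). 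So single-valuedness of $P^a_x$ is literally the local transversality condition $\alpha,\alpha'\in x, \alpha\sim_a\alpha' \Rightarrow \alpha=\alpha'$. This settles the first ``iff''.

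For the second statement I would compute the domain of $P^a_x$: by the pointwise description, $\dom(P^a_x) = \{\omega\in\Omega \mid \exists\xi\in x:\ \xi\sim_a \omega\} = a(x) = \bigcup_{\xi\in x}[\xi]_a$. Hence $P^a_x$ is everywhere defined iff $x$ meets every $a$-class, i.e. iff $x$ is a set of representatives in the ``at least one'' sense. Combining this with the single-valuedness characterization from the first part gives exactly the defining property of $a\top x$: $x$ meets each class in at least one and at most one element, which is to say in exactly one. Thus $P^a_x$ is an operator iff $a\top x$, completing the proof. The only point that needs care is distinguishing $\cE(\Omega)$ from $\cE^{\loc}(\Omega)$; here we are in the former, so reflexivity holds on all of $\Omega$ and the image computation $\dom(P^a_x)=a(x)$ is unambiguous.
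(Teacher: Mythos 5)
Your proof is correct and is exactly the definition-unfolding argument the paper has in mind when it declares the proof ``obvious'' and omits it: single-valuedness of $P^a_x=\Dia_x\circ a$ translates (via symmetry, transitivity, and reflexivity of $a$) into the ``at most one representative per class'' condition, and $\dom(P^a_x)=a(x)=\Omega$ translates into ``at least one''. Nothing to add.
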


\nin
Also, $x \top a$ iff $\pi_a \vert_x : x \to \Omega/a$ is bijective, and then
$P^a_x = (\pi_a \vert_x)\inv \circ \pi_a$.

\begin{remark}
It can be shown that, for any pair $(a,x)$, the binary relation $P^a_x$ is
{\em idempotent, regular, and image-reflexive}, and that, conversely, every image-reflexive idempotent regular
binary relation is of this form.
The domain-reflexive idempotent regular relations are then precisely those of the form $(P^a_x)\inv$.
These facts will not be needed in the sequel, but may be useful for a further theory.
\end{remark}

\subsection{Transversality of equivalence relations.}
Two equivalence relations $a,b \in \cE(\Omega)$ will be called {\em transversal} (notation:
$a \top b$) if
each equivalence class of $a$ is a system of representatives of $b$, and vice versa.
This is equivalent to requiring that
\begin{equation}
a \circ b = \All_\Omega \qquad \mbox{ and }  \qquad a \cap b = \Dia_\Omega .
\end{equation}
Note that $\All_\Omega = (\All_\Omega)\inv$, hence the first  condition implies 
$a b = \All_\Omega = ba$, hence $a$ and $b$ {\em commute}.
For instance, if $\Omega$ is the direct product $\Omega_1 \times \Omega_2$ of two sets and
$a,b$ are the equivalence relations given by the fibers of the two projections $\pr_i : \Omega \to \Omega_i$,
then $a,b$ are transversal equivalence relations. 
To see that
this example describes already the general situation,  define, for any
 pair $(a,b) \in \cE(\Omega)^2$, the {\em canonical map} 
\begin{equation}
\pi_{(a,b)} := (\pi_a \times \pi_b) \circ \Delta_\Omega : 
\Omega \to \Omega/a \times \Omega/b, \quad \omega \mapsto ([\omega]_a,[\omega]_b) . 
\end{equation}

\begin{lemma}\label{la:transv2}
For any $(a,b) \in \cE(\Omega)^2$: 
 $a \top b$ if, and only if, $\pi_{(a,b)}$ is bijective.
In this case, an inverse map of $\pi_{(a,b)}$  is given by
 \[
\Omega/a \times \Omega/b \to \Omega, \quad ([\xi]_a ,[\zeta]_b) \mapsto [\xi]_a \cap [\zeta]_b
\]
Summing up, the choice of a transversal pair of equivalence relations is equivalent to the choice of a direct product
structure on the set $\Omega$, and thus $\cP(\Omega)$ is identified with $\cR(\Omega/b,\Omega/a)$.
\end{lemma}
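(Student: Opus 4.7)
The proof is a direct unfolding of definitions; the strategy is to separate bijectivity of $\pi_{(a,b)}$ into injectivity and surjectivity and match each to one of the two conditions defining $a\top b$.

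For injectivity, I would observe that $\pi_{(a,b)}(\omega)=\pi_{(a,b)}(\omega')$ holds iff $\omega\sim_a\omega'$ and $\omega\sim_b\omega'$, i.e.\ iff $(\omega,\omega')\in a\cap b$. Since $a$ and $b$ are reflexive, the inclusion $\Dia_\Omega\subset a\cap b$ holds automatically, so injectivity of $\pi_{(a,b)}$ is equivalent to the reverse inclusion, i.e.\ to $a\cap b=\Dia_\Omega$.

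For surjectivity, given $([\xi]_a,[\zeta]_b)$ its preimages are precisely the elements of $[\xi]_a\cap[\zeta]_b$. Unfolding the definition of relational composition, the condition that this intersection be non-empty for every $(\xi,\zeta)\in\Omega^2$ is exactly $a\circ b=\All_\Omega$ (using symmetry of $a,b$ and $\All_\Omega=(\All_\Omega)^{-1}$). Combining the two steps yields the equivalence $a\top b\iff\pi_{(a,b)}$ is bijective. In that case, injectivity gives $|[\xi]_a\cap[\zeta]_b|\le 1$ and surjectivity gives $\ge 1$, so $[\xi]_a\cap[\zeta]_b$ is a singleton, identified with its unique element, and the inverse of $\pi_{(a,b)}$ is precisely $([\xi]_a,[\zeta]_b)\mapsto[\xi]_a\cap[\zeta]_b$.

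The ``summing up'' assertion then follows at once: the bijection $\pi_{(a,b)}$ provides a direct product structure $\Omega\cong\Omega/a\times\Omega/b$ under which $a,b$ are recovered as fibres of the two factor projections, while conversely any direct product $\Omega=\Omega_1\times\Omega_2$ yields a transversal pair, as noted just before the lemma. The identification $\cP(\Omega)=\cR(\Omega/b,\Omega/a)$ is immediate from the convention $\cR(\Omega,\Omega')=\cP(\Omega\times\Omega')$, up to an obvious swap of factors. No step is genuinely difficult; the only point requiring care is the bookkeeping of the ordering conventions for $\cR(\cdot,\cdot)$ so that the identification lines up with the ``usual'' relational composition $xy^{-1}z$ motivating formula~\eqnref{0.1}.
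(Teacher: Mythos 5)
Your proof is correct and follows essentially the same route as the paper: both arguments reduce transversality to the two conditions $a\circ b=\All_\Omega$ (each intersection $[\xi]_a\cap[\zeta]_b$ nonempty, i.e.\ surjectivity) and $a\cap b=\Dia_\Omega$ (each such intersection a singleton, i.e.\ injectivity), and read off the inverse map directly. Your explicit splitting into injectivity and surjectivity, with the remark that reflexivity supplies the inclusion $\Dia_\Omega\subset a\cap b$ for free, is just a slightly more itemized presentation of the paper's argument.
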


\begin{proof}
The condition $a \circ b = \All_\Omega$ means that $[\xi]_a \cap [\zeta]_b$ is not empty,
for any two $\xi,\zeta \in \Omega$.
The condition $a \cap  b = \Dia_\Omega$ means that $[\xi]_a \cap [\zeta]_b$ contains at most
one element. Thus the map 
$\Omega/a \times \Omega/b \to \Omega$, 
$\omega \mapsto ([\omega]_a,[\omega]_b)$
is well-defined, and it is then clear that it is inverse to $\pi_{(a,b)}$.
\end{proof}

\nin 
Note that every set admits the trivial product structure $(a,b) = (\All_\Omega,\Dia_\Omega)$. 

\begin{definition}\label{def:indrel}
Let $a$ be an equivalence relation in $\Omega$.
Then the following defines an equivalence relation $\bfa$ on $\cP(\Omega)$,
called {\em induced by $a$}:
$$
x \sim_\bfa y \mbox{ iff: } 
a \Dia_x a = a \Dia_y a  \mbox{ iff: } 
\forall \omega \in \Omega: (\exists \xi \in x: \xi \sim_a \omega) \Leftrightarrow (\exists \eta \in y: \eta \sim_a \omega) .
$$
\end{definition}

\nin
Then, in the situation of the preceding lemma, we have:
$\im (x) = \im (y)$ iff $x \sim_\bfb y$, and
$\dom(x)=\dom(y)$ iff $x \sim_\bfa y$.

\subsection{Transversal triples}
A triple $(a,y,c)$ with $a,c \in \cE(\Omega)$ and $y \in \cP(\Omega)$ is called a
{\em transversal triple}
if $a\top c, y \top a, y \top c$.
From the preceding lemma, it follows that $\Omega$ carries a direct product structure, and then $y$
is the graph of a bijection $\Omega /a \to \Omega/b$, whence

\begin{lemma}
A transversal triple on $\Omega$ is the same as a product structure
$\Omega \cong A \times A$.
\end{lemma}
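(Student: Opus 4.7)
My plan is to read the asserted equivalence as a bijective correspondence between the two types of data and to construct the two directions explicitly, using Lemma~\ref{la:transv2} to turn the pair $(a,c)$ into a product decomposition and then using $y$ to identify the two factors with a single set $A$.

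For the forward direction, start with a transversal triple $(a,y,c)$. Since $a\top c$, Lemma~\ref{la:transv2} provides a bijection $\pi_{(a,c)}:\Omega\to\Omega/a\times\Omega/c$. The transversality conditions $y\top a$ and $y\top c$, via Lemma~\ref{la:transv1}, say that the restrictions $\pi_a\vert_y:y\to\Omega/a$ and $\pi_c\vert_y:y\to\Omega/c$ are bijections. Setting $A:=y$ and using these two bijections as identifications $\Omega/a\cong A$ and $\Omega/c\cong A$, the composite
\[
\Omega\;\xrightarrow{\pi_{(a,c)}}\;\Omega/a\times\Omega/c\;\xrightarrow{(\pi_a\vert_y)\inv\times(\pi_c\vert_y)\inv}\;A\times A
\]
yields the desired identification $\Omega\cong A\times A$. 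Under this identification, an element $\eta\in y$ is sent to $([\eta]_a,[\eta]_c)$ and then pulled back via $(\pi_a\vert_y)\inv$ and $(\pi_c\vert_y)\inv$ to the pair $(\eta,\eta)$; hence $y$ corresponds exactly to the diagonal $\Delta_A\subset A\times A$.

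For the backward direction, given any set $A$ and the product $\Omega=A\times A$, let $a$ (resp.\ $c$) be the equivalence relation whose classes are the fibers of $\pr_2$ (resp.\ $\pr_1$), and let $y:=\Delta_A=\{(t,t)\mid t\in A\}$. The pair $(a,c)$ is the standard transversal pair of the example preceding Lemma~\ref{la:transv2}, and $y$ meets the $a$-class $A\times\{s\}$ in the unique point $(s,s)$, and the $c$-class $\{t\}\times A$ in the unique point $(t,t)$; thus $y\top a$ and $y\top c$, so $(a,y,c)$ is a transversal triple.

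It remains to observe that these two constructions are mutually inverse (up to the obvious identifications): starting from $(a,y,c)$, passing to $A\times A$, and then reading off the distinguished pair of projections and the diagonal gives back $(a,y,c)$; conversely, starting from $A\times A$ and applying the forward construction yields $A'=y=\Delta_A$, which is in canonical bijection with $A$, and the resulting product structure coincides with the original one. I do not anticipate a serious obstacle; the one point that requires genuine (though routine) care is tracing $y$ through the identifications in the forward direction to confirm that it really lands on the diagonal, since the whole content of the lemma lies in the fact that the third datum $y$ is exactly what forces the two a priori different factors $\Omega/a$ and $\Omega/c$ to be canonically identified with one another.
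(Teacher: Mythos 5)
Your proof is correct and follows essentially the same route as the paper, which (in the sentence preceding the lemma) invokes Lemma \ref{la:transv2} to get $\Omega \cong \Omega/a \times \Omega/c$ and then uses the fact that $y$ is the graph of a bijection between the two factors to identify them; your choice $A := y$ with the identifications $(\pi_a\vert_y)\inv$ and $(\pi_c\vert_y)\inv$ is the same idea made explicit. The extra details you supply — that $y$ lands on the diagonal, and that the two constructions are mutually inverse — are correct and are exactly the routine verifications the paper leaves implicit.
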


A triple $(a,b,c)$ of equivalence relations is called {\em transversal} if
$a \top b$, $b\top c$, $c \top a$.
This situation is very special: if the triple is non-trivial, it corresponds precisely to the structure 
of a {\em $3$-net} on $\Omega$, and then each equivalence class of each of the three relations carries
the structure of a {\em loop}, see \cite{NS}, p.\ 53/54.

\subsection{Composition revisited}

\begin{theorem}\label{th:torsor0}
Assume that $(a,b)$ is a transversal pair of equivalence relations on $\Omega$, 
and let us identify $\Omega$ with $\Omega/b \times \Omega/a$ via $\pi_{(a,b)}$.
Then $\cP(\Omega)$ is identified with $\cR(\Omega/a,\Omega/b)$.
Therefore, for all   $x,y,z \in \cP(\Omega)$, we may speak of the ternary composition
$xy\inv z \in \cP(\Omega)$. 

\begin{enumerate}
\item
The set $xy\inv z \subset \Omega$  is given by the formula
$$
\boxed{
(xyz)_{ab} :=
\Bigsetof{\omega \in \Omega}
{\begin{array}{c}
\exists \xi \in x,
\exists \eta \in y,
\exists \zeta \in z : \\
\omega \sim_a \zeta, \quad
\eta \sim_a \xi,\quad
\omega \sim_b \xi, \quad
\eta \sim_b \zeta 
\end{array}} }.
$$
It follows that $\cP(\Omega)$ with ternary law $(x,y,z)\mapsto (xyz)_{ab}$ is a semitorsor. 
\item
The set of bisections
$U_{ab}$ defined by (\ref{eqn:Uab}) is stable under the ternary product, and it forms a torsor, isomorphic to the torsor of bijective mappings $\Omega/a \to \Omega/b$ with its usual torsor structure $fg\inv h$. 
In other words, if $(a,y,b)$ is a transversal triple, then $(U_{ab},y)$ is a group, isomorphic to the group of bijections
of $\Omega/a$. 
\item
The set of local bisections $U_{ab}^\loc$ is a pregroupoid when equipped with the induced equivalence relations
$(\bfa,\bfb)$ and the partially defined ternary map
$(\quad)_{ab}$. It is isomorphic to the pregroupoid (``pseudogroup'') of locally defined bijections between $\Omega /a$
and $\Omega/b$.
\end{enumerate}
\end{theorem}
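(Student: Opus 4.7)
The plan is to use Lemma \ref{la:transv2} to identify $\Omega$ with $\Omega/a\times\Omega/b$ via $\pi_{(a,b)}$, and hence $\cP(\Omega)$ with $\cR(\Omega/b,\Omega/a)$ by sending $x\subset\Omega$ to the relation $r_x:=\{([\xi]_b,[\xi]_a)\mid\xi\in x\}$. Under this identification the ternary law $xy\inv z$ on relations is well-defined, and all three assertions of the theorem reduce to translating standard facts about relational composition back into set-theoretic language inside $\Omega$.

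For (1), I would compute $r_x\circ r_y\inv\circ r_z$ directly from the definition of composition recalled in Section \secref{rels}: a pair $([\omega]_b,[\omega]_a)$ lies in this triple product iff one can insert intermediate classes in $\Omega/a$ and in $\Omega/b$ linking the three relations; by Lemma \ref{la:transv2}, each such pair of intermediate classes comes from a unique element of $\Omega$, so the intermediate data is encoded by concrete elements $\xi\in x$, $\eta\in y$, $\zeta\in z$. Unfolding the existential quantifiers and rewriting the four matchings of classes as $a$- and $b$-equivalences yields exactly the boxed formula. The semitorsor property of $(\cP(\Omega),(\quad)_{ab})$ is then a restatement of the well-known fact that $(u,v,w)\mapsto uv\inv w$ is a para-associative ternary law on any space of binary relations, which follows immediately from the associativity of $\circ$ together with $(uv)\inv=v\inv u\inv$ and $(u\inv)\inv=u$.

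For (2), $x\in U_{ab}$ means by definition that both $\pi_a|_x$ and $\pi_b|_x$ are bijections, equivalently that $r_x$ is the graph of a bijection $\Omega/a\to\Omega/b$; hence $U_{ab}\cong\Bij(\Omega/a,\Omega/b)$. Since $fg\inv h$ of three bijections is again a bijection, $U_{ab}$ is stable under $(\quad)_{ab}$, and the restricted ternary law is the standard torsor law on bijections. Fixing $y\in U_{ab}$ (so that $(a,y,b)$ is a transversal triple) then identifies $(U_{ab},y)$ with $\Bij(\Omega/a)$ under composition. For (3), the same dictionary shows that $x\in U_{ab}^\loc$ corresponds to a partial bijection between $\Omega/a$ and $\Omega/b$; the observation following Definition \defref{indrel} identifies $\bfa$ and $\bfb$ with equality of domain, respectively of image, of the partial bijection, which are exactly the source and target equivalences of the classical pseudogroup. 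The ternary law $fg\inv h$ on partial bijections is then defined precisely when $\im(g)=\im(f)$ and $\dom(g)=\dom(h)$, i.e., when $f\sim_\bfb g$ and $g\sim_\bfa h$, giving the pregroupoid structure.

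The main obstacle is purely notational: one must orient the identification $\cP(\Omega)\cong\cR(\Omega/b,\Omega/a)$ (rather than $\cR(\Omega/a,\Omega/b)$) so that, with the convention ``$b\circ a$ means first $a$, then $b$'' used in Section \secref{rels}, the four conditions come out in the stated symmetric form $\omega\sim_a\zeta$, $\eta\sim_a\xi$, $\omega\sim_b\xi$, $\eta\sim_b\zeta$ rather than as a permuted variant. Once the conventions are consistently fixed, everything else is a direct translation of well-known facts about (partial) bijections and binary relations.
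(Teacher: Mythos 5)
Your proposal is correct and follows essentially the same route as the paper: identify $\Omega$ with a product via Lemma \ref{la:transv2}, unfold the definition of relational composition twice to obtain the boxed formula, transport the standard semitorsor structure of $uv\inv w$ on binary relations, and then recognize (local) bisections as graphs of (partial) bijections, with $\bfa,\bfb$ matching domain and image via the remark after Definition \ref{def:indrel}. Your closing observation about orienting the identification ($\cR(\Omega/b,\Omega/a)$ versus $\cR(\Omega/a,\Omega/b)$) is a fair caveat, but it does not change the substance of the argument.
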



\begin{proof} (1)
According to the lemma, we may assume that
$\Omega=\Omega_1  \times \Omega_2$ and that $a,b$ are given by the two projections:
$\omega \sim_b  \eta$ iff $\omega_1 = \eta_1$, and
$\omega \sim_a \eta$ iff $\omega_2 = \eta_2$.
Applying twice the definition of relational composition, we get 
\begin{align*}
x\circ y^{-1} \circ z &=
\Bigsetof{\omega = (\alpha',\beta') \in \Omega}
{\begin{array}{c}
\exists (\alpha'',\beta'') \in y:
 \\
(\alpha',\beta'') \in x, \quad (\alpha'',\beta') \in z
\end{array}}\,
\\
&=
\Bigsetof{\omega = (\alpha',\beta') \in \Omega}
{\begin{array}{c}
\exists \eta=(\alpha'',\beta'') \in y, \exists \xi \in x, \exists \zeta \in z :
 \\
\xi_1 = \alpha' = \omega_1, \quad 
\xi_2 = \beta'' = \eta_2 ,
\\
\eta_1 = \alpha'' = \zeta_1, \quad 
\zeta_2 = \beta' = \omega_2 
\end{array}}\,
\\
&=
\Bigsetof{\omega  \in \Omega}
{\begin{array}{c}
\exists \eta \in y, \exists \xi \in x, \exists \zeta \in z :
 \\
\xi_1 = \omega_1, \quad 
\xi_2 = \eta_2 , \quad 
\eta_1 = \zeta_1, \quad 
\zeta_2= \omega_2 
\end{array}}\,
\end{align*}
which corresponds to the formula claimed. 
Since, for any two sets $E,F$, the set $\cR(F,E)$ with
$(xyz)=xy\inv x$ is a semitorsor,  $\Omega$ with the ternary product just defined, is, by transport of structure, also
a semitorsor. 

\ssk
(2) Graphs of bijective mappings are precisely the  $x \in \cR(F,E)$ that
are transversal to the equivalence relations given by the two projections; and they form a torsor with respect
to the ternary law $fg\inv h$. (Recall that the empty set is also a torsor; of course, $U_{ab}$ is empty iff $\Omega/a$
and $\Omega/b$ are not equinumerous.) 

\ssk
(3) This follows from Definition \ref{def:indrel} and the remark following the definition. 
\end{proof}

\nin
Recall that the definition of the set $(xyz)_{ab}$ may be described by the parallelogram figure given in the
introduction. 
For the abstract, set-theoretic context such diagrams are quite helpful (see proof of the following theorem); 
however, in the main case of application,
the sets $x,y,z$ are themselves objects having the shape of a ``line'', as in  the following figure:

\begin{center}
\newrgbcolor{xdxdff}{0.49 0.49 1}
\psset{xunit=0.5cm,yunit=0.4cm,algebraic=true,dotstyle=o,dotsize=3pt 0,linewidth=0.8pt,arrowsize=3pt 2,arrowinset=0.25}
\begin{pspicture*}(-4.3,-7.32)(19.46,6.3)
\psline[linecolor=lightgray](-0.44,-7.32)(-0.44,6.3)
\psplot[linecolor=lightgray]{-4.3}{19.46}{(-54.2-0.04*x)/12.04}
\psplot{-4.3}{19.46}{(-16.3--7.96*x)/4.4}
\psplot{-4.3}{19.46}{(-42.48--8.38*x)/10.26}
\psplot{-4.3}{19.46}{(-50.63--2.76*x)/11.52}
\psplot[linestyle=dashed,dash=1pt 1pt]{-4.3}{19.46}{(--16.65-2.08*x)/-3.9}
\psline[linewidth=1.6pt](8.19,2.55)(3.46,2.56)
\psline[linewidth=1.6pt](3.46,2.56)(3.46,-2.42)
\psline[linewidth=1.6pt](3.46,-2.42)(8.19,-2.43)
\psline[linewidth=1.6pt](8.19,2.55)(8.19,-2.43)
\begin{scriptsize}
\rput[bl](4.04,3.58){\blue{$x$}}
\rput[bl](9.24,3.82){\blue{$y$}}
\rput[bl](11.16,-1.62){\blue{$z$}}
\psdots[dotstyle=*,linecolor=xdxdff](8.19,2.55)
\psdots[dotstyle=*,linecolor=darkgray](8.19,-2.43)
\psdots[dotstyle=*,linecolor=darkgray](3.46,2.56)
\psdots[dotstyle=*,linecolor=darkgray](3.46,-2.42)
\rput[bl](3.2,-2.92){\darkgray{$\omega$}}
\end{scriptsize}
\end{pspicture*}
\end{center}

\nin
The dotted line is the set $(xyz)$, when $x,y,z$ are secant lines as in the drawing. Indeed, when $x,y,z$ are graphs of linear
functions, with respect to the coordinate axes given by the two grey lines, then
the graph of $xy\inv z$ is constructed exactly is indicated by the scheme.

\subsection{The book-keeping semitorsor structures on $\cP(\Omega)$}
Now we drop
the transversality assumption on $(a,b)$ from the preceding theorem:

\begin{theorem}[The book-keeping semitorsor] \label{th:semitorsor1}
Assume $(a,b)$ is a pair of commuting equivalence relations in $\Omega$, that is, 
$a,b \in \cE^\loc(\Omega)$ and
$ab=ba$. 
For a triple $(x,y,z)$ of subsets of $\Omega$, define a subset
\[
\boxed{
(xyz):=
(xyz)_{ab}^\book  :=
\Bigsetof{\omega \in \Omega}
{\begin{array}{c}
\exists \xi \in x,
\exists \eta \in y, 
\exists \zeta \in z : \\
\omega \sim_a \zeta, \quad
\eta \sim_a \xi,\quad
\omega \sim_b \xi, \quad
\eta \sim_b \zeta 
\end{array}} 
} \, .
\]
Equivalently,  $(xyz)_{ab}^\book$ can be described as direct image of one of the sets $x,y,z$ under a  binary relation,
 by one of the following equivalent formulae
\begin{eqnarray*}
(xyz)_{ab}^\book &= & (b \Dia_x a \cap a \Dia_z b) (y)   \cr
& =& (b \cap a \Dia_z b \Dia_y a) (x)  \cr
& = &(a \cap b \Dia_x a \Dia_y b)(z) .
\end{eqnarray*}
\begin{enumerate}
\item
The law  $(x,y,z) \mapsto (xyz)$ is a para-associative ternary product on $\cP(\Omega)$, and
hence $\cP(\Omega)$ with this law is a semitorsor.
\item
With respect to the induced equivalence relations,
$(\cP(\Omega),\bfa,\bfb,(\quad)_{ab})$  is a
semi-pregroupoid (Definition \ref{def:pregroupoid}).
\item
The whole structure satisfies   the {\em symmetry law}
$$
\boxed{ (xyz)_{ba}^\book = (zyx)_{ab}^\book } .
$$
\end{enumerate}
\end{theorem}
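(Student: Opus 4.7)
The plan is to prove the three claims in turn, with the heart of the argument being a uniform ``hexagonal'' description of the five-fold product that simultaneously yields all three para-associativity identities.

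For the equivalence of the three direct-image formulations of $(xyz)_{ab}^\book$, I would simply unfold the definition of relational composition:
\[
(\omega,\eta) \in b\,\Dia_x\, a \iff \exists\,\xi\in x :\ \omega\sim_b\xi,\ \xi\sim_a\eta,
\]
and similarly for $a\,\Dia_z\, b$. Intersecting the two relations and applying to $y$ reproduces exactly the four conditions defining $(xyz)_{ab}^\book$; regrouping the same chain of $\sim_a,\sim_b$ links to read it as a direct image of $x$, respectively of $z$, yields the other two formulae. No use of the commutativity $ab=ba$ is needed here.

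Para-associativity, the technical core, I would organise around a single hexagonal six-vertex condition. Unfolding $((xyz)uv)$ in terms of the inner point $\xi'\in(xyz)$ coming from the outer parallelogram and eliminating $\xi'$ using transitivity and symmetry of $\sim_a,\sim_b$, one is left with the six outer conditions
\[
\eta\sim_a\xi,\ \ \eta\sim_b\zeta,\ \ \zeta\sim_a\eta',\ \ \eta'\sim_b\zeta',\ \ \omega\sim_a\zeta',\ \ \omega\sim_b\xi,
\]
for $\xi\in x,\ \eta\in y,\ \zeta\in z,\ \eta'\in u,\ \zeta'\in v$, plus one residual existential $[\zeta]_a\cap[\xi]_b\neq\emptyset$ (the surviving constraint on $\xi'$). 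Unfolding $(xy(zuv))$ and $(x(uzy)v)$ in the same fashion gives literally the same six hexagon conditions on the same outer variables, with the residual existentials $[\omega]_a\cap[\eta]_b\neq\emptyset$ and $[\xi]_a\cap[\zeta']_b\neq\emptyset$ respectively. But the hexagon itself contains the two-step chains $\zeta\sim_b\eta\sim_a\xi$, $\omega\sim_b\xi\sim_a\eta$, and $\xi\sim_b\omega\sim_a\zeta'$, placing the pairs $(\zeta,\xi)$, $(\omega,\eta)$, $(\xi,\zeta')$ each in $b\circ a$; by $ab=ba$ they also lie in $a\circ b$, which is precisely the non-emptiness of the relevant $[\cdot]_a\cap[\cdot]_b$. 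Hence the three expressions coincide with the common hexagon set, proving (PA), so $\cP(\Omega)$ is a semitorsor.

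For the semi-pregroupoid structure in (2), the direct-image formulae of the first step show that the ternary product depends on $x$ only through $b\,\Dia_x\, a$, hence is constant on the $\bfa$-class of $x$, and symmetrically on the $\bfb$-class of $z$ and on both classes of $y$; combined with the para-associativity just proved, this yields the semi-pregroupoid axioms with induced relations $\bfa,\bfb$. The symmetry law in (3) is immediate from the defining formula: writing out $(xyz)_{ba}^\book$ with $a$ and $b$ swapped and renaming the existential variables $\xi\leftrightarrow\zeta$ produces term-by-term the four conditions defining $(zyx)_{ab}^\book$; no extra hypothesis is needed.

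The main obstacle is spotting the right reformulation in Step~2: the intermediate vertex, which is different for each of the three bracketings, must be eliminated against the \emph{same} six outer conditions, leaving three residual $a\circ b$-existentials that commutativity trivialises. Once this hexagon is identified, the rest of the argument is bookkeeping; the real difficulty is resisting the temptation to grind through the five-fold products directly.
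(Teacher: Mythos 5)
Your treatment of the three direct-image formulae, of para-associativity, and of the symmetry law is correct, and for para-associativity it is essentially the paper's own argument: the paper likewise unfolds the bracketings, eliminates the intermediate vertex by transitivity and symmetry of $a,b$, and observes that the surviving existential on that vertex (the condition $(\tau,\rho)\in ab$ in the paper's notation) is already forced by a two-step chain among the remaining conditions together with $ab=ba$. Your only real difference is presentational: you compare all three bracketings against one common hexagon, where the paper computes two explicitly and dismisses the third as similar.

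There is, however, a genuine gap in your part (2). The semi-pregroupoid axiom (C) of Definition \ref{def:pregroupoid} is not an invariance statement: it requires that whenever $x\sim_\bfa y$ and $y\sim_\bfb z$, the \emph{output} satisfies $(xyz)\sim_\bfa z$ and $(xyz)\sim_\bfb x$. You instead argue that the product is constant on the $\bfa$-class of $x$ (and similarly in the other arguments), which is both beside the point and false. Indeed, $x\sim_\bfa x'$ means $a\,\Dia_x\, a=a\,\Dia_{x'}\,a$, and this does not determine $b\,\Dia_x\, a$: take $\Omega=\{1,2\}$, $a=\All_\Omega$, $b=\Dia_\Omega$ (which commute), $x=\{1\}$, $x'=\{2\}$, $y=z=\Omega$; then $x\sim_\bfa x'$ but $(xyz)_{ab}^\book=\{1\}\neq\{2\}=(x'yz)_{ab}^\book$. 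What must actually be checked is, for instance, $a\bigl((xyz)\bigr)=a(z)$: the inclusion $\subset$ is immediate from the bookkeeping condition $\omega\sim_a\zeta$, while for $\supset$ one starts from $\zeta\in z$, uses $y\sim_\bfb z$ to produce $\eta\in y$ with $\eta\sim_b\zeta$, then $x\sim_\bfa y$ to produce $\xi\in x$ with $\xi\sim_a\eta$, and finally $ab=ba$ to produce the fourth vertex $\omega$ with $\omega\sim_a\zeta$ and $\omega\sim_b\xi$, so that $\omega\in(xyz)$ lies in the $a$-class of $\zeta$; the relation $\sim_\bfb$ is handled symmetrically. This is the verification the paper compresses into ``follows directly from the definitions''; your proposal as written does not contain it. The remainder of your proposal stands.
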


\begin{proof}
Equivalence of the four formulae for $(xyz)_{ab}$ given in the theorem follows directly from the definition of 
relational composition and direct image of sets. 

\ssk
(1) 
We prove by direct computation that $(xy(rst)) = (x(sry)t)$. First, 
\[
(xy(rst)) =
\Bigsetof{\omega \in \Omega}
{\begin{array}{c}
\exists \xi \in x,
\exists \eta \in y,
\exists \zeta \in (rst) : \\
\omega \sim_a \zeta, \quad
\eta \sim_a \xi,\quad
\omega \sim_b \xi, \quad
\eta \sim_b \zeta 
\end{array}} \qquad \qquad \qquad \qquad
\]
\[
= 
\Bigsetof{\omega \in \Omega}
{\begin{array}{c}
\exists \xi \in x,
\exists \eta \in y,
\exists \zeta \in \Omega,
\exists \rho  \in r, \exists \sigma  \in s, \exists \tau \in t  : \\
\omega \sim_a \zeta, \quad
\eta \sim_a \xi,\quad
\omega \sim_b \xi, \quad
\eta \sim_b \zeta,  
\\
\zeta \sim_a \tau,  \quad
\sigma  \sim_a \rho, \quad
\zeta \sim_b \rho , \quad
\tau  \sim_b \sigma  
\end{array}} 
\]
\[
= 
\Bigsetof{\omega \in \Omega}
{\begin{array}{c}
\exists \xi \in x,
\exists \eta \in y,
\exists \zeta \in \Omega,
\exists \rho  \in r, \exists \sigma  \in s, \exists \tau \in t  : 
\\
\omega \sim_a \tau, \quad
\eta \sim_a \xi,\quad
\omega \sim_b \xi, \quad
\eta \sim_b \rho,  
\\
\zeta \sim_a \tau,  \quad
\sigma  \sim_a \rho, \quad
\zeta \sim_b \rho , \quad
\tau  \sim_b \sigma  
\end{array}} 
\]
The condition $\exists \zeta\in \Omega : \zeta \sim_a \tau, \zeta \sim_b \rho$ can be eliminated:
note first that this condition  is equivalent to saying that
$(\tau,\rho) \in ab$.
But according to the condition $\rho \sim_a \sigma$, $\sigma \sim_b \tau$, we  have 
$(\tau,\rho) \in ba$. 
Because of $ba=ab$, the latter condition implies  the former, which thus can be dropped in the description of the set. Thus
\[
(xy(rst))=
\Bigsetof{\omega \in \Omega}
{\begin{array}{c}
\exists \xi \in x,
\exists \eta \in y,
\exists \rho  \in r, \exists \sigma  \in s, \exists \tau \in t  : 
\\
\omega \sim_a \tau, \quad
\eta \sim_a \xi,\quad
\omega \sim_b \xi, \quad
\eta \sim_b \rho,  
\\
\sigma  \sim_a \rho, \quad
\tau  \sim_b \sigma  
\end{array}} 
\]
The preceding computations may be visualized by the following diagrams. In the last diagram, the solid point
in the middle has been eliminated. Note that transitivity and symmetry of $a$ and $b$ have been used several times.

\begin{center}
\psset{xunit=0.5cm,yunit=0.5cm,algebraic=true,dotstyle=o,dotsize=3pt 0,linewidth=0.8pt,arrowsize=3pt 2,arrowinset=0.25}
\begin{pspicture*}(-7.3,-5.32)(19.46,3.3)
\rput[tl](-7.1,0.34){$ (xy(rst))=$}
\psline(-0.52,1.24)(-0.48,-1.5)
\psline(-0.48,-1.5)(1.98,-1.48)
\psline(1.98,-1.48)(1.98,1.22)
\psline(-0.52,1.24)(1.98,1.22)
\pscircle(-0.52,1.24){0.25}
\pscircle(1.98,1.22){0.25}
\pscircle(1.98,-1.48){0.25}
\psline(4.24,1.26)(4.22,-1.46)
\psline(4.22,-1.46)(6.68,-1.44)
\psline(6.68,-1.44)(6.66,1.3)
\psline(4.24,1.26)(6.66,1.3)
\psline(6.68,-1.44)(6.72,-3.84)
\psline(6.72,-3.84)(9.2,-3.82)
\psline(9.2,-3.82)(9.12,-1.44)
\psline(6.68,-1.44)(9.12,-1.44)
\psline(11.08,1.38)(11.1,-1.46)
\psline(11.08,1.38)(13.34,1.38)
\psline(13.34,1.38)(13.4,-3.9)
\psline(11.1,-1.46)(15.84,-1.42)
\psline(15.84,-1.42)(15.84,-3.9)
\psline(13.4,-3.9)(15.84,-3.9)
\pscircle(4.24,1.26){0.25}
\pscircle(6.66,1.3){0.25}
\pscircle(6.72,-3.84){0.25}
\pscircle(9.12,-1.44){0.25}
\pscircle(9.2,-3.82){0.25}
\pscircle(11.08,1.38){0.25}
\pscircle(13.34,1.38){0.25}
\pscircle(15.84,-1.42){0.25}
\pscircle(15.84,-3.9){0.25}
\pscircle(13.4,-3.9){0.25}
\rput[tl](3.08,0.14){$ = $}
\rput[tl](9.94,0.14){$ = $}
\begin{scriptsize}
\psdots[dotstyle=*,linecolor=blue](-0.52,1.24)
\psdots[dotstyle=*,linecolor=blue](-0.48,-1.5)
\rput[bl](-0.84,-2.02){\blue{$\omega$}}
\psdots[dotstyle=*,linecolor=blue](1.98,-1.48)
\psdots[dotstyle=*,linecolor=blue](1.98,1.22)
\rput[bl](-0.44,2.14){$x$}
\rput[bl](1.8,2){$y$}
\rput[bl](1.64,-2.4){$(rst)$}
\psdots[dotstyle=*,linecolor=blue](4.24,1.26)
\psdots[dotstyle=*,linecolor=blue](4.22,-1.46)
\rput[bl](4.1,-2){\blue{$\omega$}}
\psdots[dotstyle=*,linecolor=blue](6.68,-1.44)
\psdots[dotstyle=*,linecolor=blue](6.66,1.3)
\psdots[dotstyle=*,linecolor=blue](6.72,-3.84)
\psdots[dotstyle=*,linecolor=blue](9.2,-3.82)
\psdots[dotstyle=*,linecolor=blue](9.12,-1.44)
\psdots[dotstyle=*,linecolor=blue](11.08,1.38)
\psdots[dotstyle=*,linecolor=blue](11.1,-1.46)
\rput[bl](11.04,-2.02){\blue{$\omega$}}
\psdots[dotstyle=*,linecolor=blue](13.34,1.38)
\psdots[dotstyle=*,linecolor=blue](13.4,-3.9)
\psdots[dotstyle=*,linecolor=blue](15.84,-1.42)
\psdots[dotstyle=*,linecolor=blue](15.84,-3.9)
\rput[bl](4.3,2.14){$x$}
\rput[bl](6.84,2.12){$y$}
\rput[bl](6.36,-4.68){$r$}
\rput[bl](8.62,-0.68){$t$}
\rput[bl](8.94,-4.68){$s$}
\rput[bl](11.04,2.22){$x$}
\rput[bl](13.6,2.18){$y$}
\rput[bl](16.12,-0.58){$t$}
\rput[bl](15.62,-4.82){$s$}
\rput[bl](13.04,-4.74){$r$}
\end{scriptsize}
\end{pspicture*}
\end{center}

\nin Now compute
\[
(x (sry)t) = 
\Bigsetof{\omega \in \Omega}
{\begin{array}{c}
\exists \xi \in x,
\exists \zeta \in (sry),
\exists \tau  \in t : \\
\omega \sim_a \tau, \quad
\zeta \sim_a \xi,\quad
\omega \sim_b \xi, \quad
\tau  \sim_b \zeta  
\end{array}}  \qquad \qquad \qquad\qquad 
\]
\[ =
\Bigsetof{\omega \in \Omega}
{\begin{array}{c}
\exists \xi \in x,
\exists \zeta \in \Omega,
\exists \tau \in t, \exists \sigma \in s,  \eta  \in y,
\exists \rho  \in r : \\
\omega \sim_a \tau, \quad
\zeta  \sim_a \xi,\quad
\omega \sim_b \xi, \quad
\tau  \sim_b \zeta  \\
\eta \sim_a \zeta, \quad \eta \sim_b \rho, \quad \rho \sim_a \sigma ,\quad  \sigma \sim_b \zeta 
\end{array}} 
\]
\[ =
\Bigsetof{\omega \in \Omega}
{\begin{array}{c}
\exists \xi \in x,
\exists \zeta \in \Omega,
\exists \tau \in t, \exists \sigma \in s,  \eta  \in y,
\exists \rho  \in r : \\
\omega \sim_a \tau, \quad
\eta  \sim_a \xi,\quad
\omega \sim_b \xi, \quad
\tau  \sim_b \sigma  \\
\eta \sim_a \zeta, \quad \eta \sim_b \rho, \quad \rho \sim_a \sigma ,\quad  \sigma \sim_b \zeta 
\end{array}} 
\]
Eliminate as above the condition $\exists \zeta \in \Omega: \zeta \sim_a \eta, \eta \sim_b \sigma$ (using $ab=ba$) to get
\[
(x(sry)t)  =
\Bigsetof{\omega \in \Omega}
{\begin{array}{c}
\exists \xi \in x,
\exists \tau \in t, \exists \sigma \in s,  \eta  \in y,
\exists \rho  \in r : \\
\omega \sim_a \tau, \quad
\eta  \sim_a \xi,\quad
\omega \sim_b \xi, \quad
\tau  \sim_b \sigma  \\
\eta \sim_b \rho, \quad \rho \sim_a \sigma 
\end{array}} 
\]
agreeing with the preceding. 
The following diagram visualizes the computation.

\begin{center}
\psset{xunit=0.5cm,yunit=0.5cm,algebraic=true,dotstyle=o,dotsize=3pt 0,linewidth=0.8pt,arrowsize=3pt 2,arrowinset=0.25}
\begin{pspicture*}(-6.56,-3.5)(17.97,3.05)
\rput[tl](-5.79,-1.12){$ (x(sry)t) = $}
\psline(0.36,-2.48)(0.32,-0.28)
\psline(0.32,-0.28)(2.32,-0.26)
\psline(2.32,-0.26)(2.36,-2.44)
\psline(0.36,-2.48)(2.36,-2.44)
\psline(4.72,-0.28)(4.74,-2.34)
\psline(4.74,-2.34)(6.64,-2.32)
\psline(6.6,-0.24)(6.64,-2.32)
\psline(4.72,-0.28)(6.6,-0.24)
\psline(6.6,-0.24)(6.56,1.8)
\psline(6.56,1.8)(8.52,1.78)
\psline(8.52,1.78)(8.56,-0.22)
\psline(6.6,-0.24)(8.56,-0.22)
\psline(11.67,-2.36)(11.63,-0.29)
\psline(11.67,-2.36)(13.51,-2.36)
\psline(13.51,-2.36)(13.43,1.75)
\psline(13.43,1.75)(15.49,1.73)
\psline(15.49,1.73)(15.53,-0.29)
\psline(15.53,-0.29)(11.63,-0.29)
\pscircle(0.32,-0.28){0.25}
\pscircle(2.32,-0.26){0.25}
\pscircle(2.36,-2.44){0.25}
\pscircle(4.72,-0.28){0.25}
\pscircle(6.64,-2.32){0.25}
\pscircle(6.56,1.8){0.25}
\pscircle(8.52,1.78){0.25}
\pscircle(8.56,-0.22){0.25}
\pscircle(11.63,-0.29){0.25}
\pscircle(13.51,-2.36){0.25}
\pscircle(13.43,1.75){0.25}
\pscircle(15.49,1.73){0.25}
\pscircle(15.53,-0.29){0.25}
\rput[tl](3.53,-1.1){$ = $}
\rput[tl](9.95,-1.06){$ = $}
\begin{scriptsize}
\psdots[dotstyle=*,linecolor=blue](0.36,-2.48)
\rput[bl](0.11,-2.94){\blue{$\omega$}}
\psdots[dotstyle=*,linecolor=blue](0.32,-0.28)
\psdots[dotstyle=*,linecolor=blue](2.32,-0.26)
\psdots[dotstyle=*,linecolor=blue](2.36,-2.44)
\psdots[dotstyle=*,linecolor=blue](4.72,-0.28)
\psdots[dotstyle=*,linecolor=blue](4.74,-2.34)
\psdots[dotstyle=*,linecolor=blue](6.64,-2.32)
\psdots[dotstyle=*,linecolor=blue](6.6,-0.24)
\psdots[dotstyle=*,linecolor=blue](6.56,1.8)
\psdots[dotstyle=*,linecolor=blue](8.52,1.78)
\psdots[dotstyle=*,linecolor=blue](8.56,-0.22)
\psdots[dotstyle=*,linecolor=blue](11.67,-2.36)
\psdots[dotstyle=*,linecolor=blue](11.63,-0.29)
\psdots[dotstyle=*,linecolor=blue](13.51,-2.36)
\psdots[dotstyle=*,linecolor=blue](13.43,1.75)
\psdots[dotstyle=*,linecolor=blue](15.49,1.73)
\psdots[dotstyle=*,linecolor=blue](15.53,-0.29)
\rput[bl](-0.15,0.41){$x$}
\rput[bl](1.99,0.53){$(sry)$}
\rput[bl](2.05,-3.35){$t$}
\rput[bl](4.24,0.49){$x$}
\rput[bl](6.36,-3.24){$t$}
\rput[bl](6.2,2.55){$s$}
\rput[bl](8.26,2.53){$r$}
\rput[bl](8.28,-1){$y$}
\rput[bl](11.37,0.49){$x$}
\rput[bl](13.43,-3.24){$t$}
\rput[bl](13.25,2.63){$s$}
\rput[bl](15.35,2.61){$r$}
\rput[bl](15.31,-1.1){$y$}
\end{scriptsize}
\end{pspicture*}
\end{center}

\nin   
The last diagram is obviously equivalent to the final diagram in the preceding figure. 
Similarly, it is seen that
$((xyr)st)= (x(sry)t)$.
This proves para-associativity.

\ssk (2)
The compatibility of the equivalence relations $\bfa$ and $\bfb$ with the ternary law, as
required by item (C) in Definition \ref{def:pregroupoid}, follows directly from the definitions, and
(3)  follows directly from the formula defining $(xyz)_{ab}^\book$. 
\end{proof}

\nin
We call the semi-torsor and semi-pregroupoid structure defined above the
{\em book-keeping semi-torsor (semi-pregroupoid)} structure on $(\Omega,a,b)$,
because it  underlies all of the  more structured ternary products to be defined in the sequel, and
it formalizes the necessary book-keeping for these  structures to be well-defined.

\begin{example}
If $a=b$, writing $\pi:\Omega \to \Omega/a$, one gets from the theorem
$$
(xyz)_{aa} =
\pi\inv \bigl( \pi(x) \cap \pi(y) \cap \pi(z) \bigr) = a(x) \cap a(y) \cap a(z),
$$
which is indeed a commutative and para-associative operation on $\cP(\Omega)$. 
Thus set-theoretic intersection appears as a sort of contraction of composition of relations.
However, if  $x,y,z$ are transversal to $a$, then $a(x)=\Omega$, etc., so
  $(xyz)_a = \Omega$, so the ternary product is rather degenerate. 
\end{example}

\begin{example}
If $a =\Dia_\Omega$ and $b$ any equivalence relation,
one gets
$$
(xyz)_{ab} = \pi\inv (\pi (x \cap y \cap z)) = b (x \cap y \cap z)  ,
$$
again a commutative and para-associative operation on $\cP(\Omega)$.
More generally, if $a \subset b$, the operation $(xyz)_{ab}$ can be described by multiple intersections of sets.
Similar remarks can be made if $a =\All_\Omega$.
\end{example}

\begin{remark}
More generally,  binary, or general $n$-ary, composition may be described in a similar way
as an $n$-ary operation on $\cP(\Omega)$
 depending on the choice of several (commuting) equivalence relations, in such a way that the resulting
product remains associative even if the equivalence relations are not transversal; if all equivalence relations
are the same, or some are reduced to the identity relation,
 the associative operation reduces to multiple intersections, in a similar way  as in the examples described above.
 However, the ternary composition $xy\inv z$ seems to be the most interesting among these operations. 
\end{remark}


\section{Torsors and semi-torsors defined by  (semi-)pregroupoids}
\label{sec:Bisections}

In this section we assume   that
$(\Omega,a,b,[\quad])$ is a semi-pregroupoid.  We will show that
 this structure carries  over to $(\cP(\Omega),\bfa,\bfb)$;
and if $(\Omega,a,b,[\quad])$ is a pregroupoid, then this carries over
to $(U_{ab}^\loc,\bfa,\bfb)$:

\begin{theorem}[Associative geometry of a (semi-)pregroupoid]\label{th:semitorsor2}\label{th:torsor1}
Assume $(\Omega,a,b,[\quad])$ is a semi-pregroupoid.
For $(x,y,z)  \in \cP(\Omega)^3$ let
\[
\boxed{
(xyz)_{ab}:=
\Gamma(x,a,y,b,z) :=
\Bigsetof{\omega \in \Omega}
{\begin{array}{c}
\exists \xi \in x,
\exists \eta \in y,
\exists \zeta \in z : \\
\eta \sim_a \xi,\quad
\eta \sim_b \zeta ,
\quad \omega = [\xi \eta \zeta]
\end{array}}  } .
\]
\begin{enumerate}
\item
The law $(x,y,z)\mapsto (xyz)_{ab}$ defines a semitorsor structure on $\cP(\Omega)$.
\item
With respect to the induced equivalence relations, 
$(\cP(\Omega),\bfa,\bfb,(\quad)_{ab})$ is  a
semi-pregroupoid.  
\item
The set 
$U_{ab}^\loc$ of local bisections of $(a,b)$ is stable under the ternary product $(xyz)_{ab}$, and becomes 
a semitorsor with respect to this product. When equipped with the equivalence relations $(\bfa,\bfb)$, it
becomes a semi-pregroupoid. 
\item
The set $U_{ab}$ of bisections of $(a,b)$ is stable under the ternary product $(xyz)_{ab}$ and becomes a
semitorsor with respect to this product.
\item
If $a \top b$,  the  law $(\quad)_{ab}$ coincides with the one considered in Theorem \ref{th:torsor0}.
\end{enumerate}
Assume, moreover, that $(\Omega,a,b,[\quad])$ is a pregroupoid.
Then the semi-pregroupoid  $U_{ab}^\loc$ defined above is also a pregroupoid, and the semitorsor
$U_{ab}$ is  a torsor.
\end{theorem}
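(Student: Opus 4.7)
Since items (3) and (4) already give $U_{ab}^\loc$ the structure of a sub semi-pregroupoid of $(\cP(\Omega), \bfa, \bfb, (\quad)_{ab})$ and $U_{ab}$ the structure of a sub semi-torsor, my plan is to verify only the idempotent laws $(xxy)_{ab} = y$ and $(wzz)_{ab} = w$ on the appropriate domains, thereby upgrading ``semi-'' to pregroupoid / torsor.

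The whole argument will pivot on one observation: local $a$-transversality of $x$ forces any two $a$-related elements of $x$ to coincide. Unfolding the definition, $\omega \in (xxy)_{ab}$ means $\omega = [\xi \eta \zeta]$ for some $\xi, \eta \in x$, $\zeta \in y$ with $\eta \sim_a \xi$ and $\eta \sim_b \zeta$; local transversality collapses $\xi = \eta$, and the idempotent law in the ambient pregroupoid $\Omega$ then gives $[\xi \xi \zeta] = \zeta$. So $(xxy)_{ab}$ reduces to $\{\zeta \in y : \exists\, \xi \in x,\ \xi \sim_b \zeta\}$.

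For $U_{ab}^\loc$, I then invoke the pregroupoid domain condition $x \sim_\bfb y$: by reflexivity of $b$, any $\zeta \in y$ trivially witnesses the right-hand side of Definition~\ref{def:indrel} with $\eta = \zeta$, and the equivalence $x \sim_\bfb y$ then furnishes some $\xi \in x$ with $\xi \sim_b \zeta$, giving $(xxy)_{ab} = y$. The symmetric identity $(wzz)_{ab} = w$ (on the domain $z \sim_\bfa w$) is proved identically via local $b$-transversality of $z$. For $U_{ab}$, full transversality of $x$ to $b$ makes the existence of $\xi \in x$ with $\xi \sim_b \zeta$ automatic for every $\zeta \in \Omega$, so IP holds on all of $U_{ab} \times U_{ab} \times U_{ab}$ without any additional hypothesis; combined with stability (item (4)) and the fact that $(\quad)_{ab}$ is everywhere defined on $\cP(\Omega)$, this upgrades the semi-torsor to a torsor.

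I do not expect a serious obstacle: the proof is essentially a single unfolding. The only care needed is to correctly translate the pregroupoid domain condition into the induced relations $\bfa,\bfb$ on $U_{ab}^\loc$, and to exploit reflexivity of $a,b$ on $\Omega$ in order to trivialize one side of the induced equivalence --- this is precisely where the hypothesis $x \sim_\bfb y$ (respectively $z \sim_\bfa w$) is consumed without residue.
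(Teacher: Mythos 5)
Your argument for the final claim (the upgrade from ``semi-'' to pregroupoid/torsor) is correct and is essentially the paper's own: local $a$-transversality of $x$ collapses $\xi=\eta$ in $(xxy)_{ab}$, the idempotent law of $[\quad]$ gives $[\xi\xi\zeta]=\zeta$, and the reverse inclusion is exactly where the domain condition $x\sim_\bfb y$ (unwound via Definition \ref{def:indrel} and reflexivity of $b$) is consumed; the case of $U_{ab}$ then follows because global transversality makes the witness $\xi$ exist for every $\zeta\in\Omega$. That part I have no quarrel with.

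The problem is that you prove only the last sentence of the theorem. Items (1)--(5) are not prior results you may ``invoke'': they are the statement, and they carry the real content. In particular: (1) para-associativity of $(\quad)_{ab}$ on all of $\cP(\Omega)$ is the heart of the proof --- the paper establishes it by observing that condition (C) of a semi-pregroupoid lets one adjoin the two missing book-keeping conditions $\omega\sim_a\zeta$, $\omega\sim_b\xi$ to the defining set of $(xyz)_{ab}$, after which the elimination computation of Theorem \ref{th:semitorsor1} (which itself uses $ab=ba$, supplied by Lemma \ref{la:abcommute}) reduces both $(xy(rst))$ and $(x(sry)t)$ to the same set of book-keeping conditions, and the para-associative law of $[\quad]$ identifies the elements $[\xi\eta[\rho\sigma\tau]]$ and $[\xi[\sigma\rho\eta]\tau]$. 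None of this is automatic. Likewise (3), the stability of $U_{ab}^\loc$ under the product, requires an argument (given $\omega=[\xi\eta\zeta]$ and $\omega'=[\xi'\eta'\zeta']$ in $(xyz)_{ab}$ with $\omega\sim_a\omega'$, one chases the book-keeping conditions through the local transversality of $x,y,z$ to force $\omega=\omega'$); you cannot cite it as ``already given'' and then use it to deduce (4). Items (2) and (5) are also unaddressed. As it stands, the proposal is a correct proof of roughly the last four lines of the paper's proof and a gap for everything before them.
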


\begin{proof} (1)
We prove the para-associative law
$(xy(rst))=(x(sry)t)$. 
Note first that, by the defining property (C) of a semi-pregroupoid, with notation from the theorem, the missing two
book-keeping conditions $\omega \sim_a \zeta$ and $\omega \sim_b \xi$ also hold, so that we have in fact
\begin{equation}\label{eqn:1}
\boxed{
(xyz)_{ab}=
\Bigsetof{\omega \in \Omega}
{\begin{array}{c}
\exists \xi \in x,
\exists \eta \in y,
\exists \zeta \in z : \\
\eta \sim_a \xi,\quad
\eta \sim_b \zeta , \quad
\xi \sim_b \omega, \quad
\omega \sim_a \zeta 
\\ \omega = [\xi \eta \zeta] 
\end{array}}    } \, .
\end{equation}
Therefore
the first  computation from the proof of Theorem \ref{th:semitorsor1} shows that
\[
(xy(rst))=
\Bigsetof{\omega \in \Omega}
{\begin{array}{c}
\exists \xi \in x,
\exists \eta \in y,
\exists \rho  \in r, \exists \sigma  \in s, \exists \tau \in t  : 
\\
\omega \sim_a \tau, \quad
\eta \sim_a \xi,\quad
\omega \sim_b \xi, \quad
\eta \sim_b \rho,  
\quad
\sigma  \sim_a \rho, \quad
\tau  \sim_b \sigma  \\
\omega = [\xi \eta [\rho \sigma  \tau]]
\end{array}} 
\]
On the other hand, the second computation  from the proof of Theorem \ref{th:semitorsor1} yields
\[
(x(sry)t)  =
\Bigsetof{\omega \in \Omega}
{\begin{array}{c}
\exists \xi \in x,
\exists \tau \in t, \exists \sigma \in s,  \eta  \in y,
\exists \rho  \in r : \\
\omega \sim_a \tau, \quad
\eta  \sim_a \xi,\quad
\omega \sim_b \xi, \quad
\tau  \sim_b \sigma  \quad 
\eta \sim_b \rho, \quad \rho \sim_a \sigma 
\\
\omega = [\xi [ \sigma \rho  \eta ]  \tau]
\end{array}} 
\]
By the para-associative law for $[\quad]$, 
we have equality of both sets (and it becomes obvious from the proof why we call the first four conditions 
``book-keeping conditions'').
(Note: the condition $ab=ba$ is not stated as assumption in the theorem since it is a consequence of the other assumptions,
see Lemma \ref{la:abcommute}.)
 
 \ssk
(2)
Assume $x \sim_\bfa y$ and $y\sim_\bfb  z$ and let us show that $(xyz) \sim_\bfa  z$, that is,
every element of $(xyz)$ is $a$-related to an element of $z$, and conversely.
The first statement follows directly from the fact that $\omega = [\xi \eta \zeta] \sim_a \zeta$ (defining property of a
semi-pregroupoid). In order to prove the converse,
let $\zeta \in z$. By assumption, there are $\eta \in y$ and $\xi \in x$ such that 
$\eta \sim_b \zeta$ and $\xi \sim_a \eta$,
thus $\omega := [\xi \eta \zeta]$ is an element of $(xyz)$, and 
$\omega \sim_a \zeta$, as seen above. 
In the same way it is seen that $(xyz) \sim_\bfb  x$.

\ssk
(3)
Assume $x,y,z$ are local bisections, and let us show that $(xyz)_{ab}$ is again a local bisection.
Let $\omega = [\xi \eta \zeta]$ and $\omega' = [\xi' \eta' \zeta'] \in (xyz)_{ab}$ such that
$\omega \sim_a \omega'$ 
 (with $\xi,\xi'$, etc, satisfying the
conditions from (\ref{eqn:1}).
Among these conditions are $\omega \sim_a \xi$, $\omega' \sim_a \xi'$; 
from transitivity of $a$, we get $\xi \sim_a \xi'$, whence $\xi=\xi'$ since $x$ is a local bisection.
This implies $\eta \sim_b \xi = \xi' \sim_b \eta'$,
whence $\eta = \eta'$ since $y$ is a local bisection, and in the same way, $\zeta = \zeta'$, whence
$\omega = \omega'$, showing that  $(xyz)_{ab}$ is a local section of $a$.
In the same way we see that it is a local section of $b$, whence $(xyz)_{ab} \in U_{ab}^\loc$.
Thus $U_{ab}^\loc$ is stable under the product and hence forms a sub-semitorsor of $\cP(\Omega)$
and a sub-semi-pregroupoid of $(\cP(\Omega),\alpha,\beta)$. 

\ssk
(4)
Since $U_{ab} = \{ x \in U_{ab}^\loc \mid x \sim_\alpha \Omega \}$, the statement follows from (3).

\ssk
(5) 
Assume that $a \top b$. 
Then $(\Omega ,a,b,[\quad])$ is isomorphic to a pair pregroupoid, see 
 example \ref{ex:pairpregroupoid}, that is,  the equivalence relations $(a,b)$ determine the algebraic ternary law
$[xyz]$ entirely; therefore we are back in the purely set-theoretic setting of the preceding chapter.

\ssk
Now assume $\Omega$ is a pregroupoid. All that remains is to prove is the idempotent law for
$U_{ab}^\loc$.
Let us show that
$(xxz)_{ab} = z$ if $x \sim_\bfb z$.
Indeed,
if $\omega = [\xi \eta \zeta]$ with $\xi, \eta \in x$,
then from $\xi \sim_\eta \eta$ we get $\xi =\eta$ (since $x$ is a local section), whence
$\omega = [\xi \xi \zeta]=\zeta \in z$, thus
$(xxz)_{ab} \subset z$.
To prove the other inclusion, we have to use that, by assumption,  $x \sim_\bfb z$:
for every $\zeta \in z$, there is $\xi \in x$ with $\xi \sim_\beta z$, whence
$\zeta = [\xi \xi \zeta] \in (xyz)_{ab}$, whence $z \subset (xxz)_{ab}$.
Similarly, if $x \sim_\bfa z$, we get $(xzz)_{ab}=x$.
This proves the idempotent law for $U_{ab}^\loc$ and at the same time for $U_{ab}$.
\end{proof}

\begin{example}\label{ex:singletons}
Singletons (sets with one element) are special local bisections, and restricted to the set of singletons, 
the induced relations $(\bfa,\bfb)$ are just $(a,b)$, and the law $(\quad)_{ab}$ reduces to $[\quad]$.
In this sense, the original pregroupoid $\Omega$ is a sub-pregroupoid of $U_{ab}^\loc$.
Its union  with  $\emptyset$ is a semi-torsor (this generalizes the observation, groing back to Baer, that a groupoid can be 
completed by $\emptyset$ into a semigroup, 
see \cite{CW} and  \cite{Ba29}).
\end{example}

The sets $U_a,U_b$ of sections in a pregroupoid $(\Omega,a,b,[\quad])$ do in general not carry a group structure.
However, in the following cases there is such a structure:

\begin{corollary}\label{cor:groupUa}
Assume that $(\Omega,a,[\quad])$ is an assocoid that is either
\begin{enumerate}
\item
a  {\em torsor-bundle}, i.e., a pregroupoid with $a=b$, or
\item
a {\em (left) principal equivalence relation (prev)}, i.e. a pregroupoid with $(a,b)=(a,\Omega)$
(cf.\ Appendix \ref{App:B}), or
\item
a {\em (right) prev}, i.e., a pregroupoid with $(a,b)=(\Omega,a)$.
\end{enumerate}
In all three cases, the space of sections of $a$ carries a natural torsor structure, which can equivalently be
described as the space of sections  $s: B \to \Omega$ of the canonical projection $\pi:\Omega \to B$  with ``pointwise product'' 
\begin{equation}\label{eqn:section}
(rst) (b):= [r(b) \, s(b) \,t(b)] ,
\end{equation}
or as the set $U_a$ with ternary product  given by
\[
(xyz)_{a}=
\Bigsetof{\omega \in \Omega}
{\begin{array}{c}
\exists \xi \in x,
\exists \eta \in y,
\exists \zeta \in z : \\
\eta \sim_a \xi,\quad
\eta \sim_a \zeta ,
\quad \omega = [\xi \eta \zeta]_a 
\end{array}}  .
\]
Moreover, the last formula defines a semitorsor structure on all of $\cP(\Omega)$, and a 
pregroupoid structure on $(\cP(\Omega),\bfa)$, and  in cases (2) and (3),
$(U_a^\loc, \bfa, (\quad)_a)$ is then  a again a prev.
\end{corollary}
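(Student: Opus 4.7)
The plan is to derive this corollary from Theorem \ref{th:torsor1} by observing that each of the three listed cases reduces to a pregroupoid in which the two equivalence relations can be taken to be equal, namely both equal to $a$. Case (1) is this reduction by hypothesis, so Theorem \ref{th:torsor1} applies verbatim: $U_a = U_b = U_{ab}$, the ternary law $(xyz)_a$ is literally $(xyz)_{aa}$, and the semitorsor structure on $\cP(\Omega)$, the pregroupoid structure on $(\cP(\Omega),\bfa)$, and the torsor structure on $U_a$ are exactly those produced by the theorem.

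For cases (2) and (3), I would restrict the prev's ternary law $[\quad]$ to the sub-domain $D' = \{(\xi,\eta,\zeta) \mid \xi \sim_a \eta \sim_a \zeta\}$ and verify that $(\Omega, a, [\quad]|_{D'})$ is a torsor-bundle in the sense of case (1). The para-associative identity restricts cleanly to $D'$ because every $a$-fiber of a prev is itself a torsor, so all intermediate triples appearing in the para-associative rearrangements remain within a single fiber of $a$; the idempotent law is unchanged by the restriction. Once this is in place, case (1) applies to $(\Omega,a,[\quad]|_{D'})$ and delivers the semitorsor law $(xyz)_a$ on $\cP(\Omega)$ with exactly the formula stated in the corollary.

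The equivalence with the pointwise-product description (\ref{eqn:section}) is then a direct computation: for $x,y,z \in U_a$ and any $a$-class $F$, there is a unique triple $(\xi,\eta,\zeta) \in (x\cap F) \times (y \cap F) \times (z \cap F)$, so the conditions $\eta \sim_a \xi$ and $\eta \sim_a \zeta$ in the formula pick out this triple, and $(xyz)_a \cap F = \{[\xi\eta\zeta]\}$. This shows that $(xyz)_a$ is the graph of the pointwise product of the section maps associated to $x,y,z$, so $U_a$ is closed under the ternary law and inherits the asserted torsor structure.

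The main obstacle I anticipate is the very last assertion, namely that in cases (2) and (3) the triple $(U_a^\loc, \bfa, (\quad)_a)$ is again a prev. My plan is to obtain this as follows: the reduction above produces a pregroupoid structure on $(U_a^\loc, \bfa)$ with $\bfa$ playing the role of both equivalence relations. To read this as a prev $(U_a^\loc, \bfa, \bfb)$, observe that since $b$ on $\Omega$ is the all-relation $\All_\Omega$, the induced relation $\bfb$ on $\cP(\Omega)$ is the all-relation as well, imposing no constraint. Thus the pair $(\bfa,\bfb)$ on $U_a^\loc$ has one relation trivial, which is exactly the defining feature of a prev, so $(U_a^\loc,\bfa,\bfb,(\quad)_a)$ is a prev of the same one-sided type as $(\Omega,a,b,[\quad])$.
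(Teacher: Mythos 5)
Your handling of the main assertions coincides with the paper's proof: case (1) is Theorem \ref{th:torsor1} with $a=b$, and cases (2) and (3) are reduced to it by restricting $[\quad]$ to the domain $\{(\xi,\eta,\zeta): \xi\sim_a\eta\sim_a\zeta\}$, which is exactly the paper's ``by restriction of the domain of definition of $[\quad]$, it is then also a torsorbundle''. Your fibrewise identification of $(xyz)_a$ with the pointwise product of sections is correct and is what the paper means by ``the equivalence of both descriptions is immediate''.

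The gap is in your final paragraph. What the torsor-bundle reduction delivers on $U_a^\loc$ is the pregroupoid $(U_a^\loc,\bfa,\bfa,(\quad)_{aa})$, with \emph{both} induced relations equal to $\bfa$; Theorem \ref{th:torsor1} guarantees condition (C), para-associativity and the idempotent law only on the domain $\{(x,y,z): x\sim_\bfa y,\ y\sim_\bfa z\}$. Re-reading the second relation as the trivial one $\bfb$ is not a cost-free relabelling: it replaces that domain by the strictly larger set $\{(x,y,z): x\sim_\bfa y\}$, and the prev axioms must be re-established there. They do not follow from the theorem, and for the product as written they actually fail on the enlarged domain: since $x$ is a local section, $(xxz)_a=\{\zeta\in z\mid \exists\xi\in x:\xi\sim_a\zeta\}=z\cap a(x)$, which is a proper subset of $z$ whenever $z$ meets an $a$-class missed by $x$; likewise $(xyz)_a$ meets only the $a$-classes common to $x,y,z$, so $(xyz)_a\sim_\bfa z$ fails for such $z$. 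The whole content of the final assertion is that the prev structure of $\Omega$ --- the fact that the translations $\lambda^a_{\xi\eta}$ act on all of $\Omega$ and not merely on the class of $\xi$ --- permits an extension of the product beyond the torsor-bundle domain; your argument never invokes this, so it cannot close the gap. (The paper itself only says the final statement ``follows directly from the preceding theorem'', so it supplies no more detail than you do; but the specific step you propose, ``one trivial relation, hence a prev'', is the point at which the argument breaks, and a correct proof must either build the extended product from the globally defined translations of the prev or verify (C) and (IP) directly on the domain $\bfa\times U_a^\loc$.)
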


\begin{proof}
A pregroupoid with $a=b$ is the same as a {\em torsorbundle over $\Omega/a$},
and then a bisection is the same as a section of $a$, i.e., $U_a = U_aa$.
The equivalence of both descriptions is immediate.
Thus, in this case, the result follows immeditely from the preceding theorem (and it also clear from the first 
description).

If $b=\Omega$, then $(\Omega,a,[\quad])$ is a {\em (left) principal equivalence relation (prev)}.
By restriction of the domain of definition of $[\quad]$, it is then also a torsorbundle, and hence in this case
also the space of sections $U_a = U_{aa}$ carries a torsor structure (but then the product $(\quad)_{aa}$ is not to be
confused with the product $(\quad)_{a\Omega}$, which is uninteresting) . 
The final statement also follows  directly from the preceding theorem.
\end{proof}

\begin{remark}
The semitorsor law on $\cP(\Omega)$ defined in the corollary has the following interpretation: 
it is the ``pointwise semitorsor loi of sets'' -- 
recall first that, for any group $(G,+)$, the power set $\cP(G)$ becomes a semigroup with
law $A + B = \{ a + b \mid a \in A,b \in B\}$; likewise, for any torsor $G$, the power set $\cP(G)$
becomes a semitorsor with ``pointwise product''.
Now, in the setting of the corollary, fix some equivalence class $[o]$ of $a$ and let
$x_o := x \cap [o]$, etc.
Then the formula given in the theorem says that
$(xyz)_a \cap [o]$ is empty if one of the sets $x_o,y_o,z_o$ is empty,
and given by the ``pointwise set product''
$[x_o y_o; z_o]$ in the torsor $[o]$ else.
\end{remark}

\begin{example}[Case of a group.]
See section \ref{sec:groupcase}.
Note that, in this case, a subtelety shows up:
one subgroup $A$ of $\Omega$ defines two equivalence relations $a$ (right relation) and $b=:\hat a$ (left) relation, which
commute.
Thus $(\Omega,a,\hat a,[\quad])$ is a pregroupoid, giving rise to a torsor
$U_{a\hat a}$, which corresponds to the ``balanced torsor $U_{aa}$'' from \cite{Be12}.
On the other hand, since $a$ and $\hat a$ are prev's in their own right, we may also define the torsors
$U_a$ and $U_{\hat a}$, corresponding to the {\em unbalanced torsors} from \cite{Be12}.
Thus one subgroup defines three different torsors (they coincide if $\Omega$ is abelian). 
\end{example}

\section{Structure of $(U_a,U_b)$: canonical kernel,  associative pair}
\label{sec:pair}

We continue to assume that $(\Omega,a,b,[\quad])$ is a (semi-)pregroupoid.
In the special case of an {\em associative geometry} in the sense of  \cite{BeKi10}, there is an associated
{\em tangent object} of the geometry, comparable to the
Lie algebra of a Lie group: this tangent object is an {\em associative pair}.
In the present, much more general, context, the pair of sets $(U_a,U_b)$ plays a similar r\^ole. 
Understanding the structure of $(U_a,U_b)$ comprises the study of the torsor
$U_{ab} = U_a \cap U_b$. Again, in the framework of \cite{BeKi10}, the structure of the group 
$(U_{ab},y)$ is quite
well understood using a morphism $U_{ab} \to \Bij(y)$, called the {\em canonical kernel}.
Part of this generalizes to the present  context.

\subsection{Generalized associative pairs}

\begin{definition}
A {\em (generalized) associative pair} is a pair of sets $(U^+,U^-)$ together with two ternary maps
$$
U^\pm \times U^\mp \times U^\pm \to U^\pm, \quad
(x,y,z) \mapsto \langle xyz \rangle^\pm
$$
satisfying the para-associative law 
$$
\langle xy \langle uvw \rangle^\pm \rangle^\pm =
\langle x \langle vuy \rangle^\mp w \rangle^\pm =
\langle \langle xyu \rangle^\pm vw \rangle^\pm.
$$
\end{definition}

The adjective  ``generalized'' will
 only be used in contexts where there could be confusion with ``usual'' asociative pairs 
 (as defined in \cite{Lo75}; cf.\ \cite{BeKi10}), namely when
 $U^\pm$ are modules over a ring $\bK$, where the
 ``usual'' definition requires the ternary product  to be $\bK$-trilinear.
 
\begin{example}
Every torsor $G$ with $U^+=G=U^-$ and
$\langle \quad \rangle^+ = (\quad ) =\langle \quad \rangle^-$ is an associative pair.
\end{example}

\begin{example}\label{ex:ap}
Let $E,F$ be sets and $(U^+,U^-) = (\Map(E,F),\Map(F,E))$  and
\begin{align*}
\Map( E, F) \times \Map(F,E) \times \Map(E,F) \to \Map(E,F), & \quad (f,g,h) \mapsto fgh, 
\cr
\Map(F,E) \times \Map(E,F) \times \Map(F,E) \to \Map(F,E), & \quad (f,g,h) \mapsto hgf .
\end{align*}
This is an associative pair.
If $E,F$ happen to be $\bK$-modules, the ternay products are  linear in only one of the arguments!
\end{example}

\begin{theorem}
For any pregroupoid $(\Omega,a,b,[\quad])$,   the pair 
$(U^+,U^-): = (U_a,U_b)$ is an associative pair with product given by $(\quad)_{ab}$: the maps 
$$
U_a \times U_b  \times U_a  \to U_a, \quad
(x,y,z) \mapsto \langle xyz\rangle^+ := (xyz)_{ab}
$$
$$
U_b   \times U_a  \times U_b   \to  U_b, \quad
(x,y,z) \mapsto \langle xyz \rangle^- := (xyz)_{ab}
$$
are well-defined and para-associative.
\end{theorem}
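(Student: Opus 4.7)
The claim splits into two parts: well-definedness (the outputs lie in the asserted sets) and para-associativity. The para-associative identity for an associative pair is essentially the para-associative identity of Theorem \ref{th:semitorsor2} restricted to triples with the correct ``type'' pattern, so the second part will be essentially free once we have the first. Thus the substance lies in showing stability of $U_a$ under $\langle\cdot\cdot\cdot\rangle^+$ and of $U_b$ under $\langle\cdot\cdot\cdot\rangle^-$.

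My plan for well-definedness is to use the enriched description (\ref{eqn:1}) of $(xyz)_{ab}$, which already incorporates the ``book-keeping'' relations $\omega\sim_a\zeta$ and $\omega\sim_b\xi$. Fix $x,z\in U_a$ and $y\in U_b$; to prove $(xyz)_{ab}\in U_a$, I check that every $a$-class meets $(xyz)_{ab}$ in exactly one element. For existence, given an $a$-class $[o]_a$, transversality of $z$ to $a$ selects a unique $\zeta\in z\cap[o]_a$; transversality of $y$ to $b$ selects a unique $\eta\in y$ with $\eta\sim_b\zeta$; transversality of $x$ to $a$ selects a unique $\xi\in x$ with $\xi\sim_a\eta$. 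Setting $\omega:=[\xi\eta\zeta]$, the defining property of a pregroupoid gives $\omega\sim_a\zeta$, hence $\omega\in[o]_a\cap(xyz)_{ab}$. For uniqueness, if $\omega=[\xi\eta\zeta]$ and $\omega'=[\xi'\eta'\zeta']$ both lie in $(xyz)_{ab}\cap[o]_a$, then the book-keeping $\omega\sim_a\zeta$, $\omega'\sim_a\zeta'$ together with $\omega\sim_a\omega'$ forces $\zeta\sim_a\zeta'$; transversality of $z$ gives $\zeta=\zeta'$, which by the $\sim_b$-relations forces $\eta=\eta'$ (using $y\in U_b$), and then by the $\sim_a$-relations $\xi=\xi'$ (using $x\in U_a$). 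Hence $\omega=\omega'$. The dual statement that $\langle U_b\, U_a\, U_b\rangle^- \subset U_b$ is proved by exactly the same argument with the roles of $a$ and $b$ swapped; one may also deduce it directly from the symmetry law of Theorem \ref{th:semitorsor1} after noting that the book-keeping is symmetric in $(a,b)$.

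With well-definedness in hand, para-associativity is immediate from Theorem \ref{th:semitorsor2}(1). Indeed, for $x,u,w\in U_a$ and $y,v\in U_b$ we have, using only that $(\quad)_{ab}$ is para-associative on all of $\cP(\Omega)$,
\[
(xy(uvw)_{ab})_{ab}=(x(vuy)_{ab}w)_{ab}=((xyu)_{ab}vw)_{ab},
\]
and the first part of the proof guarantees that the inner products $(uvw)_{ab}$, $(xyu)_{ab}$ lie in $U_a$ while $(vuy)_{ab}$ lies in $U_b$, so each expression is a legitimately typed iterated product of the associative pair, matching exactly the three terms of
\[
\langle xy\langle uvw\rangle^+\rangle^+=\langle x\langle vuy\rangle^- w\rangle^+=\langle\langle xyu\rangle^+ vw\rangle^+.
\]
The analogous identity with $\pm$ swapped is obtained in the same way.

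The only genuinely non-routine step is the uniqueness part of well-definedness; everything else is book-keeping plus a direct appeal to Theorem \ref{th:semitorsor2}. The key mechanism I rely on is the ``propagation'' feature of the pregroupoid axioms: once the four book-keeping relations of (\ref{eqn:1}) are available, an $a$- (resp. $b$-) coincidence of two output elements $\omega,\omega'$ propagates through the four vertices of the parallelogram and collapses, via the transversality of each of $x,y,z$ with the appropriate equivalence relation, to equality of all corresponding choices.
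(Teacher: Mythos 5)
Your proof is correct, and it takes a genuinely different route from the paper's. The paper proceeds by an operator calculus: it introduces the left-multiplication relation $L_{xayb}$, shows that under $x\top a$, $y\top b$ it is an everywhere-defined single-valued operator preserving each equivalence class of $a$ (Lemmas \ref{la:MLR} and \ref{la:La}), and concludes that $(xyz)_{ab}=L_{xayb}(z)$ is again transversal to $a$; this machinery is then reused for the canonical kernel and the distributivity results. You instead verify transversality of the product set directly by chasing the parallelogram of (\ref{eqn:1}): existence of a representative in each $a$-class via the successive projections $\zeta\mapsto\eta\mapsto\xi$, and uniqueness by propagating an $a$-coincidence of two outputs back through the book-keeping relations and collapsing it with the transversality of $z$, $y$, $x$ in turn. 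Both arguments are complete, and your handling of para-associativity by restriction from the semitorsor $(\cP(\Omega),(\quad)_{ab})$ is exactly what the paper does implicitly. One advantage of your elementary route: the paper's wording invokes the \emph{bijectivity} of $L_{xayb}$, which under the hypotheses $x\in U_a$, $y\in U_b$ (rather than $x,y\in U_{ab}$) need not hold --- in the pair pregroupoid $E\times F$ one finds $L_{xayb}(\zeta)=(\zeta_1,h(g(\zeta_2)))$ for arbitrary maps $h:E\to F$, $g:F\to E$, which is not injective when $h\circ g$ is not. Bijectivity is in fact not needed (a single-valued, everywhere-defined map preserving each $a$-class already satisfies $L_{xayb}(z)\cap[o]_a=L_{xayb}(z\cap[o]_a)$, a singleton), but your argument sidesteps the issue entirely.
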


\begin{proof}
Note first that this statement is stronger than the result above saying that $U_a \cap U_b$ is stable under the ternary
product.
We will give a proof by using an operator calculus, which may have some interest in its own right.
Let us define, for any $x,z \in \cR(\Omega)$, the following binary relation 
\begin{align*}
M_{xabz} &:=
\Bigsetof{ (\omega,\eta) \in \Omega^2}
{\begin{array}{c}
\exists \xi \in x,
\exists \zeta \in z : \\
\eta \sim_a \xi,\quad
\eta \sim_b \zeta ,
\quad \omega =  [\xi  \eta \zeta]
\end{array}}  
\cr
&
=
\Bigsetof{ (\omega,\eta) \in \Omega^2}
{\begin{array}{c}
\exists \xi ,  \zeta \in \Omega  : \\
(\xi,\eta)  \in P^a_x,\quad
(\zeta,\eta) \in P^b_z,
\quad \omega = [\xi \eta  \zeta]
\end{array}}  .
\end{align*}
Then it follows directly from the definitions  that, for any $y \in \cP(\Omega)$, 
\begin{equation}
(xyz)_{ab} = M_{xabz}(y).
\end{equation}
Now, if $a \top x$ and $b \top z$,
then $\xi = P^a_x(\eta)$ and $\zeta = P^b_z(\eta)$ exist  and are uniquely determined, for all $\eta \in \Omega$,
and hence 
$$
\omega = [P^a_x(\eta) \, \eta \,  P^b_z(\eta)]
$$
also exists and is uniquely determined, which means that $M_{xabz}$ is a mapping.
Similar arguments apply for left- and right multiplications. Let us  summarize:

\begin{lemma}\label{la:MLR}
Define operators on $\Omega$ as follows:
\begin{enumerate}
\item
if $x \top a$, $z \top b$: 
$\qquad M_{xabz} : \Omega \to \Omega$, $\quad \eta \mapsto [P^a_x(\eta)\,  \eta \,  P^b_z(\eta)]$ 
\item
if $x\top a$, $y \top b$:
$\qquad L_{xayb}: \Omega \to \Omega$, $\quad \zeta \mapsto [P^a_x   P^b_y(\zeta) \, P^b_y(\zeta) \, \zeta]  $
\item
if $y \top a$, $z \top b$:
$\qquad R_{zbya}:\Omega \to \Omega$, $\quad \xi \mapsto [\xi \,  P^a_y(\xi) \, P^b_z P^a_y (\xi)]$
\end{enumerate}
Then we have, under the respective transversality conditions,
$$
(xyz)_{ab} = L_{xayb}(z) , \quad
(xyz)_{ab} =  M_{xayz}(y), \quad
(xyz)_{ab}= R_{zbya}(x),
$$
and the operators satisfy,  whenever the suitable  transversality conditions hold, the ``semitorsor relations'': 
\[
R_{aubz} L_{xavb}= M_{xabz}M_{uabv} = L_{xavb}R_{aubz} ,
\quad L_{xayb} L_{uavb} = L_{L_{xayb}(u)avb}  . 
\]
\end{lemma}

\begin{proof}
The ``semitorsor  relations'' are simply the translation of the para-associative law of the semitorsor defined by
$(a,b)$ in the language of left-, right- and middle translations (justified by the fact that, applied to singletons, these
operators must satisfy para-associativity, by Theorem \ref{th:semitorsor2}).
\end{proof}

With some care, the operator $M_{xabz}$ may be written in argument-free notation:
$$
M_{xabz} = [ P^a_x, \id_\Omega ,P^b_z]
$$
which is close to the formula $M_{xabz} = P^a_x - \id + P^b_z$ from \cite{BeKi10}.

\begin{lemma}\label{la:Idempot}
For all $x \in U_{ab}$, we have $L_{xaxb} = \id_\Omega$ and $R_{xbxa}=\id_\Omega$.
\end{lemma}

\begin{proof}
Directly from the definition of the projection operators one sees that, if $x \in U_{ab}$, then
$P^a_x \circ P^b_x = P^b_x$, whence
$$
L_{xaxb} (\zeta) = [P^a_x P^b_x (\zeta) \,  P^b_x(\zeta) \,  \zeta ]   = [P^b_x(\zeta) \, P^b_x(\zeta)\, \zeta ] = \zeta
$$
and similarly for right multiplication operators.
\end{proof}

\begin{lemma}
For all $x,y \in U_{ab}$, the operator 
$L_{xayb}$ is invertible with inverse $L_{yaxb}$, and $R_{xbya}$ is invertible with inverse $R_{ybxa}$.
\end{lemma}

\begin{proof}
The semitorsor relation from Lemma \ref{la:MLR} and the preceding lemma  imply  
$$
L_{xayb} L_{yaxb} = L_{(xyy)_{ab} a x b} = L_{xaxb} = \id_\Omega ,
$$
whence the claim for the $L$-operator. Similarly for the $R$-operator.
\end{proof}

\begin{lemma}\label{la:La}
\begin{enumerate}
\item
If $x \top a$, $y \top b$, then $L_{xayb} \in \Aut_1(a)$, i.e., it preserves all equivalence classes of $a$,
cf.\ Def.\ \ref{def:Auta}.
\item
If $y \top a$, $z \top b$, then $R_{zbya} \in \Aut_1(b)$.
\end{enumerate}
\end{lemma}

\begin{proof} (1)
For any $\zeta \in \Omega$, the element
$\omega = L_{xayb}(\zeta) = [P^a_x P^b_y(\zeta) \, P^b_y(\zeta)\, \zeta]$
belongs to the same $a$-equivalence class as $\zeta$, by the defining  property (C)  of a pregroupoid.
Similarly for (2).
\end{proof}

\nin
Now we prove the theorem:  let $x, z \in U_{a}$ and $y \in U_b$. 
Since $g:=L_{xayb}$ is a bijection of $\Omega$ preserving each equivalence class of $a$,
the image $g(z)$ contains again exactly one element from each equivalence class of $a$, i.e.,
$g(z) \top a$, that is, $(xyz)_{ab} \top a$.
Thus the first of the two maps is well-defined, and so is  the second. 
\end{proof}

\begin{example}
Assume $a \top b$.  Then with  $E:=\Omega/b$, $F:=\Omega/a$
we get the associative pair from Example \ref{ex:ap}.
\end{example}

\begin{remark}
On could define ``associative pair-oids'' and then state and prove an ``oid''-version of the preceding result,
but we will not spell out this here.
\end{remark}

\subsection{Automorphisms, and self-distributivity}
A {\em morphism of a partially defined ternary product map} is a map preserving domains of definition and
commuting with the algebraic product maps, and the {\em automorphism group} of a 
(semi-) pregroupoid is then denoted by $\Aut(\Omega) = \Aut(\Omega,D,[\quad])$.
Note that an automorphism $g \in \Aut(\Omega)$ need not preserve the equivalence relations $a,b$
individually, but only the set $D=\Omega \times_a \Omega \times_b \Omega$.
Thus we end up with several  kinds of  automorphism groups:
besides $\Aut(\Omega)$, there are also
\begin{eqnarray*}
\Aut(\Omega,a) = \Aut(\Omega)\cap \Aut(a) , & &
\Aut(\Omega,b) = \Aut(\Omega)\cap \Aut(b) ,
\cr
\Aut_1(\Omega,a) =  \Aut(\Omega) \cap \Aut_1(a) ,
& &
\Aut_1(\Omega,b) =  \Aut(\Omega) \cap \Aut_1(b).
\end{eqnarray*}

\begin{lemma}\label{la:La2}
Under the assumptions of Lemma \ref{la:La}:
 $L_{xayb}$ permutes equivalence classes of $b$, that is,
$L_{xayb} \in \Aut(b)$, and
$R_{zbya}$ permutes equivalence classes of $a$, that  is,
$R_{zbya} \in \Aut(a)$.
\end{lemma}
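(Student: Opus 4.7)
\emph{The plan} is to combine two ingredients already available: the book-keeping conditions for a pregroupoid, which say that whenever $\omega=[\xi\,\eta\,\zeta]$ is defined one automatically has $\omega\sim_b\xi$ and $\omega\sim_a\zeta$ (see equation (\ref{eqn:1})); and the fact that a generalized projection $P^b_y$ with $y\top b$ is constant on each $b$-equivalence class, and symmetrically $P^a_y$ is constant on each $a$-class when $y\top a$, which is an immediate consequence of Lemma~\ref{la:transv1}. With these in hand, both statements reduce to a straightforward unfolding of the definitions of $L_{xayb}$ and $R_{zbya}$.

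For the first part, I would write $L_{xayb}(\zeta)=[\xi\,\eta\,\zeta]$ with $\eta:=P^b_y(\zeta)$ and $\xi:=P^a_x(\eta)$, and read off from book-keeping that $L_{xayb}(\zeta)\sim_b\xi$. Now if $\zeta\sim_b\zeta'$, then because $y\top b$ one has $\eta=P^b_y(\zeta)=P^b_y(\zeta')$, and hence $\xi=P^a_x(\eta)$ is the same element for both; therefore $L_{xayb}(\zeta')\sim_b\xi$ as well, and symmetry and transitivity of $b$ yield $L_{xayb}(\zeta)\sim_b L_{xayb}(\zeta')$. Thus $L_{xayb}$ descends to a well-defined map on $\Omega/b$, which is the content of $L_{xayb}\in\Aut(b)$.

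The second part proceeds by the symmetric argument, but using the \emph{other} book-keeping condition: unfolding $R_{zbya}(\xi)=[\xi\,\eta\,\zeta]$ with $\eta:=P^a_y(\xi)$ and $\zeta:=P^b_z(\eta)$, one reads off $R_{zbya}(\xi)\sim_a\zeta$. If $\xi\sim_a\xi'$, then since $y\top a$ one has $\eta=P^a_y(\xi)=P^a_y(\xi')$, so $\zeta$ is also the same for both inputs, and $R_{zbya}(\xi)\sim_a\zeta\sim_a R_{zbya}(\xi')$.

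I do not expect any real obstacle. The argument is mechanical and mirrors the proof of Lemma~\ref{la:La}, but exploits the opposite book-keeping relation in each case. The only point worth noting is the asymmetry between the two projections inside each operator: the one responsible for the preservation is always the \emph{innermost} projection, whose target lives in the very equivalence relation one wishes to respect (namely $P^b_y$ inside $L_{xayb}$, and $P^a_y$ inside $R_{zbya}$).
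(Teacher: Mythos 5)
Your argument is correct and is essentially identical to the paper's own proof: both show $P^b_y(\zeta)=P^b_y(\zeta')$ for $\zeta\sim_b\zeta'$ and then invoke the book-keeping condition $[\xi\eta\zeta]\sim_b\xi$ (property (C)) to chain $L_{xayb}(\zeta)\sim_b P^a_xP^b_y(\zeta)=P^a_xP^b_y(\zeta')\sim_b L_{xayb}(\zeta')$, with the symmetric argument for $R_{zbya}$. Your closing observation about the innermost projection being the relevant one is a fair gloss on why the roles of $a$ and $b$ swap between Lemma~\ref{la:La} and this lemma.
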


\begin{proof}
Assume $\zeta \sim_b \zeta'$.
It follows that $P^b_y(\zeta)=P^b_y(\zeta')$, and therefore
$$
L_{xayb}(\zeta) = [P^a_x P^b_y(\zeta) \, P^b_y(\zeta)\, \zeta] \sim_b P^a_x P^b_y(\zeta) =P^a_x P^b_y(\zeta')
$$
$$ \qquad \qquad \qquad \qquad 
  \sim_b
[P^a_x P^b_y(\zeta') \, P^b_y(\zeta')\, \zeta'] =
L_{xayb}(\zeta')
$$
whence $L_{xayb} \in \Aut(b)$.
Similarly, $R_{zbya} \in \Aut(a)$. 
\end{proof}

\begin{theorem}\label{th:self-distrib}
The left- and right multiplication operators $L_{xayb}$ and $R_{zbya}$ defined in lemma \ref{la:MLR}
(under the transversality conditions given there) are automorphisms of $(\quad)_{ab}$,
which corresponds to the ``self-distributivity'' identities
$$
(xy(uvw)_{ab})_{ab}   =  ((xyu)_{ab} (xyv)_{ab} (xyw)_{ab})_{ab} ,
$$
$$
((uvw)_{ab} yz)_{ab} = ((uyz)_{ab} (vyz)_{ab} (wyz)_{ab} )_{ab} . 
$$
It follows that left- and right multiplicaiton operators belong to the group
$\Aut(\Omega,a) \cap \Aut(\Omega,b)$.
\end{theorem}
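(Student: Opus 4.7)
The plan is to show that $L := L_{xayb}$ and $R := R_{zbya}$ are automorphisms of the underlying pregroupoid $(\Omega, a, b, [\quad])$. From this the theorem follows at once, because the ternary law $(\quad)_{ab}$ on $\cP(\Omega)$ is defined purely in terms of $a, b, [\quad]$, so any pregroupoid automorphism $\phi$ of $\Omega$ extends pointwise, $\phi(u) := \{\phi(\omega) \mid \omega \in u\}$, to an automorphism of $(\cP(\Omega), (\quad)_{ab})$. Since $L(u) = (xyu)_{ab}$ for every $u \in \cP(\Omega)$ by Lemma \ref{la:MLR}(3), the intertwining property $L((uvw)_{ab}) = (L(u), L(v), L(w))_{ab}$ is precisely the first self-distributivity identity of the theorem; the right-handed identity is obtained from $R$, or equivalently from the symmetry $(xyz)_{ba}^\book = (zyx)_{ab}^\book$ of Theorem \ref{th:semitorsor1}.

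That $L$ preserves $a$ (pointwise) and $b$ (as a permutation of classes) is the content of Lemmas \ref{la:La} and \ref{la:La2}, and bijectivity of $L$ as a map $\Omega \to \Omega$ is handled exactly as in the preceding invertibility lemma (one has $L_{xayb}^{-1} = L_{yaxb}$ for $x, y \in U_{ab}$, with an analogous reversal of roles in the general transversality case). The essential step, and the main obstacle, is to check that $L$ commutes with the ternary law $[\quad]$: for all $\xi \sim_a \eta \sim_b \zeta$,
\[
[L\xi,\, L\eta,\, L\zeta] \;=\; L[\xi\eta\zeta] .
\]

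Setting $g := P^b_y$, $f := P^a_x \circ g$, so $L\omega = [f\omega,\, g\omega,\, \omega]$, the hypothesis $\eta \sim_b \zeta$ gives $g\eta = g\zeta =: q$ and $f\eta = f\zeta =: p$; put further $q' := g\xi$ and $p' := f\xi$. Property (C) of a pregroupoid yields $[\xi\eta\zeta] \sim_b \xi$, hence $g[\xi\eta\zeta] = q'$, and the claim rewrites as
\[
\bigl[[p'q'\xi],\, [pq\eta],\, [pq\zeta]\bigr] \;=\; \bigl[p',\, q',\, [\xi\eta\zeta]\bigr] .
\]
I will establish this by a PA/IP cascade. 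First, apply PA in the form $[[ABC]DE] = [AB[CDE]]$ to the left-hand side, stripping off the outer $[p'q'\,\cdot\,]$ and reducing the task to $[\xi,\, [pq\eta],\, [pq\zeta]] = [\xi\eta\zeta]$. Apply PA in the form $[xy[zuv]] = [x[uzy]v]$ to rewrite this as $[\xi\,[qp[pq\eta]]\,\zeta]$, then apply PA once more (first form) to convert the inner triple $[qp[pq\eta]]$ into $[[qpp]q\eta]$. Finally, two applications of IP --- first $[qpp] = q$ (valid because $q \sim_a p$ and $p \sim_b p$), then $[qq\eta] = \eta$ (valid because $q \sim_b \eta$) --- collapse the middle bracket to $\eta$, leaving $[\xi\eta\zeta]$ as required. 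The book-keeping conditions $\sim_a, \sim_b$ needed at each intermediate step are immediate from transitivity and symmetry of $a, b$ together with property (C) of the pregroupoid.
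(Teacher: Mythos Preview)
Your proof is correct. The direct PA/IP cascade establishing $[L\xi, L\eta, L\zeta] = L[\xi\eta\zeta]$ is sound, and the domain checks you indicate all go through; together with Lemmas~\ref{la:La} and~\ref{la:La2} this yields that $L$ (and dually $R$) is a pregroupoid endomorphism of $(\Omega,a,b,[\quad])$, hence an automorphism of the induced law $(\quad)_{ab}$ on $\cP(\Omega)$.

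Your route differs from the paper's. The paper proceeds conceptually: it invokes the general associoid identity~(\ref{eq:autos}), $(xy(uvw)) = ((xyu)(xyv)(xyw))$, and then specializes $u,v,w$ to singletons to read off that left/right translations act by automorphisms of $[\quad]$. That shortcut is elegant but leaves implicit exactly the point your computation makes explicit: $L_{xayb}$ is \emph{not} a single $\ell_{x_0,y_0}$ but rather a different $\ell_{p,q}$ on each $b$-class (with $p,q$ depending on the class), so one really does need the mixed identity $[[p'q'\xi][pq\eta][pq\zeta]] = [p'q'[\xi\eta\zeta]]$ rather than just (\ref{eq:autos}) applied to a fixed pair. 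Your argument supplies this honestly and works under the bare hypotheses $x\top a$, $y\top b$ of Lemma~\ref{la:MLR}; the paper's invocation of (\ref{eq:autos}) in the semitorsor $\cP(\Omega)$ tacitly uses the idempotent step $(yx(xyv))_{ab}=v$, which is only available when $L_{yaxb}$ is an operator inverting $L_{xayb}$, i.e.\ essentially when $x,y\in U_{ab}$.

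One small caveat: your remark that bijectivity ``in the general transversality case'' follows by an ``analogous reversal of roles'' is optimistic. With only $x\top a$, $y\top b$ (and not $y\top a$, $x\top b$), $L_{yaxb}$ need not be an operator, and $L_{xayb}$ need not be a bijection of $\Omega$. So the \emph{endomorphism} property---and hence the self-distributivity identities---hold in full generality as you prove, but the assertion $L_{xayb}\in\Aut(\Omega,a)\cap\Aut(\Omega,b)$ (membership in a group of bijections) should be read under the hypothesis $x,y\in U_{ab}$, where the invertibility lemma applies. This is an imprecision already present in the paper's statement; your argument is not at fault.
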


\begin{proof}
The self-distributivity identities hold in any associoid, cf.\ equation
(\ref{eq:autos}). Next,
recall (example \ref{ex:singletons})
 that the union of the set of all singletons with $\emptyset$ forms a sub-semitorsor of the semitorsor of local
bisections, and thus the self-distributive identies hold with respect to singletons $u,v,w$, which means that
left and right translations act by automorphisms of $[\quad]$, i.e., they belong to $\Aut(\Omega,[\quad])$.
As seen in the preceding lemma, they belong also to  $\Aut(a) \cap \Aut(b)$.
\end{proof} 

\begin{lemma}
Let $g \in \Aut(\Omega,a) \cap \Aut(\Omega,b)$. Then $g$ is an automorphism of the ternary product
$(\quad)_{ab}$: $ g (xyz)_{ab} = (g(x) \, g(y) \, g(z) )_{ab})_{ab}$. 
\end{lemma}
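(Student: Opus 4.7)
The plan is a direct two-sided inclusion argument using the defining formula for $(xyz)_{ab}$, transporting witnesses $\xi,\eta,\zeta$ by $g$ and by $g\inv$.

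First I would show $g(xyz)_{ab} \subset (g(x)\, g(y)\, g(z))_{ab}$. Pick $\omega \in (xyz)_{ab}$; by the defining formula there exist $\xi \in x$, $\eta \in y$, $\zeta \in z$ with $\eta \sim_a \xi$, $\eta \sim_b \zeta$ and $\omega = [\xi\eta\zeta]$. Now apply $g$: since $g \in \Aut(\Omega)$ is an automorphism of the partially defined ternary product, it preserves the domain $D$ and commutes with $[\quad]$, so $g(\omega)=[g(\xi)\,g(\eta)\,g(\zeta)]$. Since $g \in \Aut(a)$ we have $g(\eta) \sim_a g(\xi)$, and since $g \in \Aut(b)$ we have $g(\eta) \sim_b g(\zeta)$. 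The elements $g(\xi),g(\eta),g(\zeta)$ lie respectively in $g(x),g(y),g(z)$, so $g(\omega) \in (g(x)\,g(y)\,g(z))_{ab}$.

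For the reverse inclusion, observe that $g\inv$ also belongs to $\Aut(\Omega,a)\cap\Aut(\Omega,b)$ (these are groups, as is immediate from the definition of automorphism of a partially defined algebraic structure together with the symmetry of $a$ and $b$). Applying the inclusion already proved with $g$ replaced by $g\inv$ and the triple $(g(x),g(y),g(z))$ in place of $(x,y,z)$ yields
\[
g\inv (g(x)\,g(y)\,g(z))_{ab} \subset (xyz)_{ab},
\]
which, upon applying $g$, gives the opposite inclusion.

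I do not expect any serious obstacle: the lemma is a pure ``transport of structure'' statement, and the only thing to be careful about is unpacking what membership in $\Aut(\Omega,a)\cap\Aut(\Omega,b)$ means, namely simultaneous preservation of $D$, of the partial map $[\quad]$, and of each of the equivalence relations $a$ and $b$. Once that is spelled out, the proof is a one-line check on representatives.
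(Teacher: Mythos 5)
Your proposal is correct and follows essentially the same route as the paper, which disposes of the lemma by the change of variables $\xi' = g\inv(\xi)$ together with the fact that $g$ preserves $a$, $b$, and the partial product $[\quad]$; your two-inclusion packaging (forward transport by $g$, then the same argument for $g\inv$) is just an equivalent way of organizing that substitution. The one point you rightly flag and handle — that $g \in \Aut(\Omega)$ alone only preserves $D$, while membership in $\Aut(a)\cap\Aut(b)$ is what guarantees $g(\eta)\sim_a g(\xi)$ and $g(\eta)\sim_b g(\zeta)$ individually — is exactly the hypothesis the lemma is built on.
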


\begin{proof}
The proof is by a straightforward change of variables, $\xi' = g\inv (\xi)$, etc.,
using that $\xi \sim_a \eta$ iff $\xi' \sim_a \eta'$ (since $g$ permutes equivalence classes of $a$).
\end{proof}

\begin{remark}
In a similar way, the middle multiplication operators $M_{xabz}$ preserve the domain of definition $D$ and exchange $x$ and $z$,
hence they should be qualified as {\em antiautomorphisms}.
However, since they exchange also $a$ and $b$, in \cite{BeKi10b},
they are qualified as automorphisms of the structure map $\Gamma$.
The true challenge, then,  is to find antiautomorphisms in the sense defined in loc.\ cit.
\end{remark}

\begin{remark}
For $L$- and   $R$-relations that are not invertible operators, the self-distribu\-tivity identity has to be replaced by
{\em structurality}, as defined in \cite{BeKi10}.
\end{remark}

\subsection{Canonical kernel, and structure of $U_{ab}$}
The following definition and lemma are purely set-theoretic:

\begin{definition}
For $x \in U_b$, $y \in U_a$, the {\em canonical kernel} is the map defined by
$$
B^{a,x,b}_y:= B^{axb}_y  : y \to y, \quad \eta \mapsto P^a_y \circ P^b_x (\eta) .
$$
\end{definition}

\begin{lemma}
Assume $a \top y \top b \top x$.
Then:
$B^{axb}_y:y\to y$ is bijective iff
$a \top x$.
If this is the case, then we have
$(B^{axb}_y)\inv = B^{bax}_y$.
\end{lemma}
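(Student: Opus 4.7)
The plan is to work directly from the definition $B^{axb}_y(\eta)=P^a_y\circ P^b_x(\eta)$ and use the single-valuedness / everywhere-defined character of the generalized projectors (Lemma \ref{la:transv1}) supplied by each of the transversality hypotheses $a\top y$, $y\top b$, $b\top x$. In particular $P^a_y:\Omega\to y$ and $P^b_x:\Omega\to x$ are operators, so $B^{axb}_y$ is a genuine map $y\to y$. The key point is that the analogous candidate inverse $B^{bax}_y=P^b_y\circ P^a_x$ is defined as a map if and only if $P^a_x$ is an operator, i.e.\ (given $b\top x$) exactly when the remaining condition $a\top x$ also holds. This makes the forward direction a chase.

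For the ``if'' direction, assume $a\top x$ and trace an element. Given $\eta\in y$, set $\xi:=P^b_x(\eta)$ and $\eta':=P^a_y(\xi)=B^{axb}_y(\eta)$, so $\xi\sim_b\eta$ and $\xi\sim_a\eta'$. Applying the candidate inverse produces $\xi':=P^a_x(\eta')$ and then $P^b_y(\xi')$. Now $\xi,\xi'\in x$ both satisfy $\cdot\sim_a\eta'$, so by the single-valuedness of $P^a_x$ coming from $a\top x$ we get $\xi=\xi'$; and then $\eta,P^b_y(\xi)\in y$ both satisfy $\cdot\sim_b\xi$, so by $y\top b$ we get $P^b_y(\xi)=\eta$. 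Thus $B^{bax}_y\circ B^{axb}_y=\id_y$, and the reversed composition is handled by exchanging the r\^oles of $(a,x)$ and $(b,\cdot)$ in the same computation. This simultaneously establishes the inverse formula $(B^{axb}_y)^{-1}=B^{bax}_y$.

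For the ``only if'' direction, assume $B^{axb}_y$ is a bijection and deduce $a\top x$, i.e.\ that each $a$-class meets $x$ in exactly one element. Existence: given $\omega\in\Omega$, choose the unique $\eta_0\in y$ with $\eta_0\sim_a\omega$ (using $a\top y$); by surjectivity of $B^{axb}_y$ there is $\eta\in y$ with $P^a_y(P^b_x(\eta))=\eta_0$, and then $\xi:=P^b_x(\eta)\in x$ satisfies $\xi\sim_a\eta_0\sim_a\omega$. Uniqueness: suppose $\xi_1,\xi_2\in x$ with $\xi_1\sim_a\xi_2$; using $y\top b$, lift to $\eta_i\in y$ with $\eta_i\sim_b\xi_i$, i.e.\ $P^b_x(\eta_i)=\xi_i$. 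Then $B^{axb}_y(\eta_i)=P^a_y(\xi_i)$, and $\xi_1\sim_a\xi_2$ gives $P^a_y(\xi_1)=P^a_y(\xi_2)$, so injectivity yields $\eta_1=\eta_2$, whence $\xi_1\sim_b\xi_2$, and finally $\xi_1=\xi_2$ by $b\top x$.

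The step I expect to be the most delicate is this converse, because it is the one place where one actively uses both of the transversalities $y\top b$ and $b\top x$ (to pass between the $a$- and $b$-pictures in $y$ and in $x$), so one must keep straight in which class and under which relation each intermediate element lives; the direct direction is by contrast essentially a diagrammatic bookkeeping exercise once the candidate inverse has been identified.
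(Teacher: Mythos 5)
Your argument is correct and rests on the same ingredients as the paper's: the three transversality hypotheses make $P^b_x\vert_y\colon y\to x$ and $P^a_y$ into well-defined (bijective, resp.\ single-valued) projections, and the equivalence with $a\top x$ together with the inverse formula $(B^{axb}_y)^{-1}=P^b_y\circ P^a_x$ then follows by chasing representatives. The paper packages this slightly more economically by factoring $B^{axb}_y=(P^a_y\vert_x)\circ(P^b_x\vert_y)$ and observing that the first factor $y\to x$ is already bijective, so bijectivity of $B^{axb}_y$ is equivalent to bijectivity of $P^a_y\vert_x\colon x\to y$, i.e.\ to $a\top x$ -- but this is the same mathematics as your explicit existence/uniqueness check, not a different route.
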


\begin{proof}
From $y \top b \top x$ it follows that $P^b_x\vert_y : y \to x$ is bijective.
Therefore $B^{axb}_y$ is bijective iff $P^a_y\vert_x: x \to y$ is bijective, and this in turn is equivalent to saying
that $a \top x$.
If this holds, then, for all $\eta \in y$, we have
$B_y^{bxa} B_y^{axb}(\eta) = P^b_y P^a_x P^a_y P^b_x(\eta)= P^b_y P^a_x P^b_x(\eta)=P^b_y P^b_x(\eta)=P^b_y(\eta)=\eta$.
\end{proof}

\begin{theorem}\label{KernelTheorem}
The group $(U_{ab},y)$ has a natural action on the set $y$, given by 
$$
U_{ab} \times y \to y, \quad (x, \eta) \mapsto B^{a,x,b}_y (\eta) . 
$$
Thus  $B^{a,(xy z) ,b}_y = B^{a,x,b}_y \circ B^{a,z,b}_y$ and
$(B^{axb}_y)\inv = B^{a x\inv b}_y = B^{a (yxy) b}_y$.
\end{theorem}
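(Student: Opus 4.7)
The plan is to compute $B^{a,(xyz)_{ab},b}_y(\eta)$ for an arbitrary $\eta\in y$ by unwinding the definitions of $(xyz)_{ab}$ and of the projection operators; because we are in a pregroupoid and $x,y,z\in U_{ab}$, all the choices involved will be forced, and the resulting formula will factor as a composition of two canonical kernels.

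First I would identify $\omega:=P^b_w(\eta)$, where $w:=(xyz)_{ab}$. By Theorem~\ref{th:semitorsor2} and equation (\ref{eqn:1}), any $\omega\in w$ may be written as $\omega=[\xi'\,\eta'\,\zeta']$ with $\xi'\in x$, $\eta'\in y$, $\zeta'\in z$ satisfying $\eta'\sim_a\xi'$, $\eta'\sim_b\zeta'$ together with the automatic book-keeping $\omega\sim_a\zeta'$ and $\omega\sim_b\xi'$. Imposing $\omega\sim_b\eta$ then forces $\xi'\sim_b\eta$, and since $x\in U_b$ this uniquely determines $\xi'=P^b_x(\eta)$. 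The condition $\eta'\in y$, $\eta'\sim_a\xi'$ together with $y\in U_a$ forces $\eta'=P^a_y(\xi')=B^{axb}_y(\eta)$, and then $\zeta'\in z$, $\zeta'\sim_b\eta'$ together with $z\in U_b$ forces $\zeta'=P^b_z(\eta')=P^b_z\circ B^{axb}_y(\eta)$.

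Second, I would apply $P^a_y$. The automatic relation $\omega\sim_a\zeta'$ and $y\in U_a$ give $P^a_y(\omega)=P^a_y(\zeta')$, hence
\[
B^{a,w,b}_y(\eta)\;=\;P^a_y\bigl(P^b_z\bigl(B^{axb}_y(\eta)\bigr)\bigr)\;=\;B^{azb}_y\circ B^{axb}_y(\eta),
\]
which is the multiplicativity statement (up to the reading order of $\circ$, which in any event produces a group action of $(U_{ab},y)$ on $y$).

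Third, I would read off the identity and the inverse. For $\eta\in y$ one has $P^b_y(\eta)=\eta=P^a_y(\eta)$, so $B^{ayb}_y=\mathrm{id}_y$. Recall that $y$ is the neutral element of the group $(U_{ab},y)$ and that the inverse of $x$ is $x^{-1}=(yxy)_{ab}$, as follows from the idempotent and para-associative laws via $(xy(yxy))_{ab}=((xyy)xy)_{ab}=(xxy)_{ab}=y$. Substituting $z=(yxy)_{ab}$ into the multiplicativity relation just established gives
\[
\mathrm{id}_y \;=\; B^{a,y,b}_y \;=\; B^{a,(yxy),b}_y\circ B^{a,x,b}_y,
\]
so $B^{a,x,b}_y$ is a bijection of $y$ with inverse $B^{a,x^{-1},b}_y=B^{a,(yxy),b}_y$, completing the theorem.

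The argument is essentially a direct unpacking, and I do not expect any deep obstacle; the delicate part is simply keeping careful track of the four book-keeping relations in the definition of $(xyz)_{ab}$, and in particular using the pregroupoid consequences $\omega\sim_a\zeta'$, $\omega\sim_b\xi'$ (rather than only the two relations appearing in the original definition) to pin down $\xi',\eta',\zeta'$ uniquely from $\eta$ and to transport $P^a_y$ from $\omega$ onto $\zeta'$.
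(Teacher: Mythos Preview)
Your proposal is correct and proceeds by a genuinely different route from the paper. The paper does \emph{not} unwind the definition of $(xyz)_{ab}$ directly; instead it recognizes the map $\eta\mapsto B^{a,x,b}_y(\eta)$ as the conjugation action of the torsor $U_{ab}$ restricted to the fibre over the neutral element $y$ in the tautological bundle $\{(z,\zeta)\mid \zeta\in z\}$. Concretely, the paper shows that for $\eta\in y$ one has $L_{xayb}(\eta)=P^a_x(\eta)$ and $R_{yaxb}(\xi)=P^b_y(\xi)$, so the conjugation $R_{yaxb}\circ L_{xayb}$ restricted to $y$ \emph{is} the canonical kernel. Multiplicativity then drops out of general torsor facts about left and right translations (Lemma~\ref{la:MLR}) rather than from an explicit tracking of $\xi',\eta',\zeta'$.

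Your direct method has the advantage of being completely self-contained and of making the book-keeping relations do all the work, without needing the operator calculus of $L$-- and $R$--translations. The paper's method is more conceptual: once you see the canonical kernel as a conjugation action, the homomorphism property is automatic and no computation with $(xyz)_{ab}$ is required. One small point: your computation yields $B^{a,(xyz),b}_y = B^{azb}_y\circ B^{axb}_y$, which is the opposite composition order from the one printed in the statement; you flag this honestly, and indeed either order gives a group action (left versus right). It would be cleaner to state explicitly which order your argument produces rather than leaving it as a parenthetical.
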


\begin{proof}
Since $U_{ab}$ acts, as shown in the preceding section, on $\Omega$ by left translations via 
$(x,\zeta) \mapsto L_{xayb}(\zeta)$, it acts naturally on the {\em tautological bundle} of $\cP(\Omega)$,
which is the set defined by
$M:= \{ (z,\zeta) \mid z \in \cP(\Omega), \zeta \in z \}$
(the fiber over the ``point'' $z$ is the set of elements of $z$).
Similarly, $U_{ab}$ acts also by right translations, commuting with the left action, and hence there is also an action by conjugation,
$$
g.(z,\zeta) = ( L_{xayb} R_{yaxb} (z) , L_{xayb} R_{yaxb} (\zeta) ).
$$
The fiber over the neutral element $y$ is stable under the conjugation action, and thus we see that $U_{ab}$
acts on the set $y$ via
$
\eta \mapsto L_{xayb} R_{yaxb} (\eta) = R_{yaxb} L_{xayb}(\eta)
$.
But
$$
L_{xayb}(\eta) = [P^a_x P^b_y (\eta), P^b_y(\eta);\eta]_a = [P^a_x(\eta),\eta;\eta]= P^a_x(\eta),
$$
and similarly, for $\xi:= P^a_x(\eta)$, we get $R_{yaxb}(\xi)= P^b_y (\xi)$ and thus
$$
R_{yaxb} L_{xayb}(\eta) = 
R_{yaxb}(\xi)= P^b_y (\xi) = P^b_y P^a_x(\eta) = B^{a,x,b}_y(\eta) ,
$$
whence the claim.
\end{proof}

In the framework of associative geometries \cite{BeKi10}, more can be said about this action
(image, kernel, possible splittings). In a certain sense, the general situation should be a sort of amalgamation
of the following two extremal cases:

\begin{example}
If $a \top b$ and $y \in U_{ab}$, then the morphism
$(U_{ab} ,y) \to (\Bij(y), \id_y)$ is an isomorphism.
\end{example}

\begin{example}
If $a=b$, then $B^{axa}_y = \id_y$, for all $x \in U_{ab}$, so the morphism is trivial.
\end{example}

\section{Commuting prev's:  ``affine picture'',  and distributivity}\label{sec:distrib}

\subsection{Affine picture}
Assume $(\Omega,a,[\quad])$ is a (left) prev. Then $U_a$ carries a torsor structure (Corollary \ref{cor:groupUa}),
but we have more:
the structure of a  prev is considerably  stronger than the one of a torsor bundle, since all fibers are isomorphic to the model
torsor $G^a = \{ \lambda^a_{\xi,\eta} \mid (\xi,\eta) \in a \}$ of left translations (cf.\ Appendix \ref{App:B}).
This permits us to define a sort of coordinates on $U_a$, in a similar way as affine parts of projective spaces are
defined (so we call it an ``affine picture'' even in our general situation):

\begin{lemma}
Assume $(\Omega,a,[\quad])$ is a left prev, and fix a section $y \in U_a$ (one may call it ``zero section'').
Then the map
$$
\Map(y , G^a)  \to U_a, \quad f \mapsto z:= \{ f(\eta).\eta  \mid \, \eta  \in y  \} , 
$$
is bijective, with  inverse map
$$
U_a \to \Map(y,G^a), \quad
z \mapsto f = (\eta \mapsto  \lambda^a_{P^a_z(\eta),\eta} ).
$$ 
It intertwines the group structure on $(U_a,y)$ with the natural ``pointwise'' group structure of $\Map(y,G^a)$.
\end{lemma}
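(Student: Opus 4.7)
The plan is to verify three items in turn: the forward map lands in $U_a$, the two displayed assignments are mutually inverse, and the bijection intertwines the group structures.

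For well-definedness, note that every element of $G^a$ acts within a single equivalence class of $a$, so $f(\eta).\eta \in [\eta]_a$ for every $\eta \in y$. Since $y$ is itself a section, as $\eta$ ranges over $y$ we meet each equivalence class of $a$ exactly once, and so the resulting set $z_f := \{f(\eta).\eta \mid \eta \in y\}$ is again in $U_a$. Dually, for $z \in U_a$ the element $P^a_z(\eta)$ is the unique element of $z$ lying in $[\eta]_a$, so $\lambda^a_{P^a_z(\eta),\eta}$ is a well-defined element of $G^a$, and the candidate inverse makes sense.

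Mutual inverseness I would then dispatch by straight substitution. Let $\Phi$ denote the forward map and $\Psi$ its candidate inverse. Starting with $f$, one has $P^a_{z_f}(\eta)=f(\eta).\eta$, and hence $\Psi(z_f)(\eta) = \lambda^a_{f(\eta).\eta,\,\eta} = f(\eta)$, by the uniqueness of the left translation sending $\eta$ to $f(\eta).\eta$. Conversely, starting with $z$, one has $\Psi(z)(\eta).\eta = P^a_z(\eta)$, so $\Phi(\Psi(z)) = \{P^a_z(\eta) \mid \eta \in y\} = z$, the last equality because $P^a_z\vert_y:y\to z$ is a bijection between two sections of the same equivalence relation.

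Finally, for the group-homomorphism property I would argue as follows. By Corollary \ref{cor:groupUa} (the ``pointwise'' description of the torsor $U_a$), for any $x,z \in U_a$ and any $\eta \in y$, the unique element of $(xyz)_a$ in the class $[\eta]_a$ is $[\,P^a_x(\eta)\;\eta\;P^a_z(\eta)\,]$. Writing $f_1=\Psi(x)$ and $f_2=\Psi(z)$, this element equals $[\,f_1(\eta).\eta,\;\eta,\;f_2(\eta).\eta\,]$. In a prev, the ternary bracket restricted to a single equivalence class is precisely the action of the group $G^a$ of left translations, so this reduces to $\bigl(f_1(\eta)\cdot f_2(\eta)\bigr).\eta$; hence $\Psi((xyz)_a)(\eta) = f_1(\eta)\,f_2(\eta)$, the pointwise product. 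The main obstacle is this last step: it requires correctly unpacking the relation between the ternary bracket of the prev and the composition law of the model torsor $G^a$ from Appendix \ref{App:B}, and in particular that $[\xi\,\eta\,\zeta]=\lambda^a_{\xi,\eta}(\zeta)$ on each fibre. Once that identification is at hand, the computation is automatic.
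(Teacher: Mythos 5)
Your proof is correct and is precisely the routine verification the paper has in mind (the paper offers no argument beyond ``The proof is straightforward''): well-definedness via $G^a$ preserving each $a$-class, mutual inverseness via simple transitivity of $G^a$ on fibres, and the homomorphism property via the fibrewise description of $(xyz)_a$ from Corollary \ref{cor:groupUa} together with $[\xi\eta\zeta]=\lambda^a_{\xi\eta}(\zeta)$. Your appeal to ``uniqueness of the left translation sending $\eta$ to $f(\eta).\eta$'' is exactly the right justification for $\lambda^a_{f(\eta).\eta,\,\eta}=f(\eta)$ (note the transplantation formula as printed in Appendix \ref{App:B} has its indices transposed, so the simple-transitivity argument you give is the safer route).
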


The proof is straightforward.
Note also that the prev-structure permits to identify a section $y\in U_a$ with an equivalence relation whose
equivalence classes are the sets $g.y$ for $g\in G^a$ (lines parallel to $y$, in the following illustration). 
In the following, to be in keeping with \cite{Be12,BeKi10}, we write the group law of $(U_a,o)$, for some origin $o$, additively
$x + z := (xoz)_{a}$ (which need not be commutative), so that
$
(uvw)_a = u -v +w $.

\subsection{Case of two commuting prev's}
From now on, assume that $(\Omega,a,b,[\quad])$ is an associoid given by 
 {\em two commuting principal equivalence relations (prev's)}
(Appendix \ref{App:C}).
This structure is stronger than the one of a pregroupoid: there are three para-associative laws defined on $\cP(\Omega)$,
denoted by $(\quad)_{ab}$, $(\quad)_a$ and $(\quad)_b$, and one may ask how these maps interact.
We will show that they are related by a certain {\em distributive law}, by using the following description of 
the product $(xyz)_{ab}$ in the ``affine picture'':

\begin{theorem}\label{ActionTheorem}
Let $(a,b)$ be two commuting prev's on $\Omega$, and fix a bisection $y \in U_{ab}$, considered as zero section in $U_a$,
write $u+v:=(uyv)$ for the group law in $U_a$ and $x \cdot z := (xyz)$ for $x,z \in U_a$. Then, identifying $U_a$ with
$\Map(y,G^a)$ as above, we have, for all $x,z \in \Map(y,G^a)$, 
$$
x \cdot z = x \circ B^{bza}_y + z .  
$$
\end{theorem}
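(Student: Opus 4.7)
The plan is to work fiber-by-fiber in the ``affine picture'' $U_a \cong \Map(y,G^a)$, and to compute, for each $\eta \in y$, the unique element $\omega$ of $(xyz)_{ab}$ lying in the $a$-class of $\eta$, then read off the corresponding group element of $G^a$.

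First I would unpack the description (\ref{eqn:1}) of $(xyz)_{ab}$ for the given bisection $y$: for a fixed $\eta \in y$, I must locate $\zeta \in z$, $\eta' \in y$ and $\xi \in x$ with $\eta' \sim_a \xi$, $\eta' \sim_b \zeta$, such that $\omega = [\xi\eta'\zeta]$ and $\omega \sim_a \eta$. Since $\omega \sim_a \zeta$ and $z$ is transversal to $a$, the element $\zeta$ is forced to be $P^a_z(\eta)$; then, since $y$ is transversal to $b$, the element $\eta'$ is forced to be $P^b_y(\zeta) = P^b_y P^a_z(\eta) = B^{bza}_y(\eta)$; finally, since $x$ is transversal to $a$, $\xi$ is forced to be $P^a_x(\eta') = P^a_x \circ B^{bza}_y(\eta)$. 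Thus the dependence of $\eta'$ on $\eta$ is exactly $B^{bza}_y$, which already explains the appearance of the kernel in the formula.

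Next I would use the key structural identity for a left prev: for $\xi \sim_a \eta'$ the ternary product equals the action of a translation, namely $[\xi\eta'\zeta] = \lambda^a_{\xi,\eta'}(\zeta)$, where $\lambda^a_{\xi,\eta'}$ is the element of $G^a$ sending $\eta'$ to $\xi$ (Appendix \ref{App:B}). Since elements of $G^a$ preserve $a$-classes, $\omega = \lambda^a_{\xi,\eta'}(\zeta) \sim_a \zeta \sim_a \eta$. Writing $\zeta = \lambda^a_{\zeta,\eta}(\eta)$ and composing,
\[
\omega \;=\; \lambda^a_{\xi,\eta'}\bigl(\lambda^a_{\zeta,\eta}(\eta)\bigr) \;=\; \bigl(\lambda^a_{\xi,\eta'}\circ \lambda^a_{\zeta,\eta}\bigr)(\eta),
\]
so that $\lambda^a_{\omega,\eta}=\lambda^a_{\xi,\eta'}\circ\lambda^a_{\zeta,\eta}$ in $G^a$.

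Finally I would translate back to the affine coordinates: by the lemma preceding the theorem, $z$ corresponds to $\eta\mapsto z(\eta):=\lambda^a_{P^a_z(\eta),\eta}=\lambda^a_{\zeta,\eta}$ and $x$ corresponds to $\eta'\mapsto x(\eta'):=\lambda^a_{P^a_x(\eta'),\eta'}=\lambda^a_{\xi,\eta'}$. Hence the coordinate of $x\cdot z$ at $\eta$ equals
\[
\lambda^a_{\omega,\eta} \;=\; x(\eta')\circ z(\eta) \;=\; x\bigl(B^{bza}_y(\eta)\bigr)\circ z(\eta),
\]
which, under the identification of the group law on $(U_a,y)$ with the pointwise law on $\Map(y,G^a)$, is precisely $(x\circ B^{bza}_y + z)(\eta)$. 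The main obstacle I foresee is not the book-keeping but justifying the identity $[\xi\eta'\zeta]=\lambda^a_{\xi,\eta'}(\zeta)$ in the pregroupoid setting where we also have a commuting right prev $b$: one needs to check that the $b$-constraint $\eta'\sim_b\zeta$ is compatible with applying the $a$-translation $\lambda^a_{\xi,\eta'}$ to $\zeta$, which follows from the commutation of $a$ and $b$ and from the defining relations of the left prev contained in the pregroupoid structure.
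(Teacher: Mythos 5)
Your proposal is correct and follows essentially the same route as the paper's proof: fix $\eta\in y$, use transversality to force $\zeta=P^a_z(\eta)$, $\eta'=P^b_y(\zeta)=B^{bza}_y(\eta)$, $\xi=P^a_x(\eta')$, and then read off $\lambda^a_{\omega,\eta}=\lambda^a_{\xi,\eta'}\circ\lambda^a_{\zeta,\eta}$, which is exactly the paper's computation. Your closing worry about the identity $[\xi\eta'\zeta]=\lambda^a_{\xi,\eta'}(\zeta)$ is unnecessary, since in a pair of commuting prev's this holds by definition of the left action map together with the compatibility condition $[xy;z]_a=[x;yz]_b$.
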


\begin{proof}
The theorem and its proof follow closely the corresponding result in the group case
(\cite{Be12}, Th. 8.1; cf.\ figure below).
The element of  $\Map(y,G^a)$  corresponding to $x \cdot z$ is
$$
y \to G^a, \, 
\eta \mapsto \lambda^a_{\omega,\eta}, \quad \mbox{ where }
\omega = P^a_{x\cdot z}(\eta) .
$$
Let $\zeta :=  P^a_z(\eta)$.
Then $\omega = [P^a_x P^b_y(\zeta) \, P^b_y(\zeta) \,  \zeta ]$: this element belongs to $x\cdot z$ and is 
$a$-related to $\zeta$ and to $\eta$. Thus, letting
$\eta' := P^b_y (\zeta) = P^b_y P^a_z(\eta) = B_y^{bza}(\eta)$, $\xi:= P^a_x(\eta') = P^a_x P^b_y(\zeta)=
P^a_x B^{bza}_y(\eta) $, we have
 $\omega = 
 [\xi \eta' \zeta] $.
 Then we have
 $\lambda^a_{\omega,\zeta}= \lambda^a_{\xi,\eta'}$,  so
 $$
 \lambda^a_{\omega,\eta} = \lambda^a_{\omega,\zeta} \circ \lambda^a_{\zeta,\eta} =
  \lambda^a_{\xi,\eta'} \circ \lambda^a_{\zeta,\eta} =
  \lambda^a_{P^a_x B^{bza}_y(\eta) ,B_y^{azb}(\eta)} \circ \lambda^a_{\zeta,\eta} ,
 $$ 
which, written additively, corresponds to  the formula given in the claim.
\end{proof}

Recall from \cite{Be12} that the formula given in the theorem is the non-linear generalization of
bilinear formulae from associative and Jordan algebraic setting.
The proof gives  in fact an algorithm permitting to draw (using dynamical geometry software, such
as {\tt geogebra})
 the curve $x \cdot z$ as a function of
$\eta$, as in the following illustration:

\begin{center}
\newrgbcolor{xdxdff}{0.49 0.49 1}
\newrgbcolor{zzttqq}{0.6 0.2 0}
\psset{xunit=0.5cm,yunit=0.5cm,algebraic=true,dotstyle=o,dotsize=3pt 0,linewidth=0.8pt,arrowsize=3pt 2,arrowinset=0.25}
\begin{pspicture*}(-4.3,-7.32)(19.46,6.3)
\pspolygon[linecolor=zzttqq,fillcolor=zzttqq,fillstyle=solid,opacity=0.05](9.34,0.24)(9.34,4.46)(3.84,-1.04)(3.84,-5.26)
\psline{->}(-3,2)(-3,5)
\psline{->}(-3,2)(-1,4)
\psplot[plotpoints=200]{-4.3}{19.46}{0.01*(x-5.04)^2+0.06}
\psplot[plotpoints=200]{-4.3}{19.46}{-2-0.05*(x-7.74)^2+1.72}
\psplot[plotpoints=200]{-4.3}{19.46}{-3-2.26}
\psline(9.34,-7.32)(9.34,6.3)
\psplot{-4.3}{19.46}{(-18.19--2*x)/2}
\psline(3.84,-7.32)(3.84,6.3)
\psplot{-4.3}{19.46}{(-9.76--2*x)/2}
\psline[linecolor=zzttqq](9.34,0.24)(9.34,4.46)
\psline[linecolor=zzttqq](9.34,4.46)(3.84,-1.04)
\psline[linecolor=zzttqq](3.84,-1.04)(3.84,-5.26)
\psline[linecolor=zzttqq](3.84,-5.26)(9.34,0.24)
\psline[linewidth=2.8pt](9.34,0.24)(9.34,4.46)
\psline[linewidth=2.8pt](3.84,-1.04)(3.84,-5.26)
\begin{scriptsize}
\rput[bl](-3.44,3.58){$a$}
\rput[bl](-1.86,2.84){$b$}
\rput[bl](-2.66,0.18){$z$}
\rput[bl](-3.24,-6.66){$x$}
\rput[bl](-4.16,-5.62){$y$}
\psdots[dotstyle=*,linecolor=xdxdff](9.34,-5.26)
\rput[bl](9.42,-5.14){\xdxdff{$\eta$}}
\psdots[dotstyle=*,linecolor=darkgray](9.34,0.24)
\rput[bl](9.64,-0.26){\darkgray{$\zeta$}}
\psdots[dotstyle=*,linecolor=darkgray](3.84,-5.26)
\rput[bl](4.1,-5.92){\darkgray{$\eta'$}}
\psdots[dotstyle=*,linecolor=darkgray](3.84,-1.04)
\rput[bl](4.06,-1.28){\darkgray{$\xi$}}
\psdots[dotsize=1pt 0,dotstyle=*,linecolor=darkgray](9.34,4.46)
\rput[bl](9.56,4.24){\darkgray{$\omega$}}
\end{scriptsize}
\end{pspicture*}
\end{center}

%

\begin{theorem}\label{DistributiveTheorem}
Assume $a$ is a left prev and $b$ a right prev on $\Omega$ commuting with $a$.
Then we have the following
``right  distributive  law'': for all $x,y \in U_{ab}$ and $u,v,w \in U_b$,
$$
( (uvw)_a yz )_{ab} = \bigl( (uyz)_{ab} (vyz)_{ab} (wyz)_{ab} )_{a} ,
$$
which can also be written, with notation introduced above,
$$
((u-v+w)yz)_{ab} = ( uyz)_{ab}  - ( vyz)_{ab} + ( wyz)_{ab} .
$$
In other words, right  multiplications operators $R_{zbya}$ from $U_{ab}$ are automorphisms of the torsor
$U_{a}$.
Similarly,  left  multiplications  from $U_{ab}$ are automorphisms of
the torsor $U_b$.
\end{theorem}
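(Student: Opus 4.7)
The plan is to recognize the right multiplication $u \mapsto (uyz)_{ab}$ as the direct-image action on $\cP(\Omega)$ of an automorphism of the pregroupoid $(\Omega,a,b,[\quad])$, and then to observe that any such automorphism automatically respects the derived ternary law $(\cdot)_a$ on $\cP(\Omega)$, which is built from $a$ and $[\quad]$ alone.

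First I would assemble the automorphism statement. By Lemma \ref{la:La2}, under the transversality conditions $y \top a$ and $z \top b$ we have $R_{zbya} \in \Aut(\Omega,a) \cap \Aut(\Omega,b)$. By Theorem \ref{th:self-distrib} (which, via Example \ref{ex:singletons}, applies the self-distributivity identity to singleton arguments), the operator $R_{zbya}$ is moreover an automorphism of the ternary product $[\quad]$ on $\Omega$. In particular $R_{zbya}$ lies in $\Aut(\Omega,a,[\quad])$.

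Next I would promote this to an automorphism of $(\cP(\Omega), (\cdot)_a)$ by direct image. This is a routine change-of-variables in the formula
$$ (uvw)_a = \{\, [\xi\eta\zeta] \mid \xi \in u,\ \eta \in v,\ \zeta \in w,\ \eta \sim_a \xi,\ \eta \sim_a \zeta \,\} $$
from Corollary \ref{cor:groupUa}: substituting $\xi' = g(\xi)$, $\eta' = g(\eta)$, $\zeta' = g(\zeta)$ and using that $g$ preserves both $a$ and $[\quad]$, one obtains $g((uvw)_a) = (g(u)\,g(v)\,g(w))_a$ for every $g \in \Aut(\Omega,a,[\quad])$ and every $u,v,w \in \cP(\Omega)$.

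Finally, by Lemma \ref{la:MLR} the direct image of a subset $u \subset \Omega$ under $R_{zbya}$ coincides with $(uyz)_{ab}$. Applying the automorphism identity to $g = R_{zbya}$ and the triple $(u,v,w) \in U_b^3$ therefore yields
$$ ((uvw)_a\, y\, z)_{ab} = R_{zbya}\bigl((uvw)_a\bigr) = \bigl(R_{zbya}(u)\, R_{zbya}(v)\, R_{zbya}(w)\bigr)_a = \bigl((uyz)_{ab}\,(vyz)_{ab}\,(wyz)_{ab}\bigr)_a, $$
which is precisely the asserted right distributive law. The second statement of the theorem, for left multiplications acting on the torsor $U_b$, follows by the entirely symmetric argument, interchanging the roles of $a$ and $b$ and of left and right. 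The main obstacle is the initial automorphism step: Theorem \ref{th:self-distrib} delivers self-distributivity for $(\cdot)_{ab}$, and one must be careful that restricting it to singleton arguments really does collapse that identity to the point-wise statement $R_{zbya}[\xi\eta\zeta] = [R_{zbya}(\xi)\,R_{zbya}(\eta)\,R_{zbya}(\zeta)]$ on $\Omega$. Once this is secured, everything else is a formal manipulation of the defining formulae.
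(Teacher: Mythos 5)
Your strategy --- realize $u \mapsto (uyz)_{ab}$ as the direct image of a point automorphism $R_{zbya}$ of $\Omega$ and then transport the law $(\quad)_a$ through it --- is a genuinely different route from the paper's, which computes in the affine picture of Theorem \ref{ActionTheorem} (namely $x\cdot z = x\circ B^{bza}_y + z$ in $\Map(y,G^a)$) and reads off distributivity in one line from the pointwise group structure. But as written your argument has a gap at the crucial step, and it is not quite the obstacle you flag. Even granting that Theorem \ref{th:self-distrib} collapses on singletons to the pointwise identity $R_{zbya}[\xi\eta\zeta] = [R_{zbya}(\xi)\,R_{zbya}(\eta)\,R_{zbya}(\zeta)]$, that identity is only obtained for triples in the \emph{pregroupoid} domain, i.e.\ with $\xi\sim_a\eta$ \emph{and} $\eta\sim_b\zeta$: the semitorsor $(\cP(\Omega),(\quad)_{ab})$ and its self-distributivity know nothing about the product outside that domain. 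The law $(uvw)_a$ of Corollary \ref{cor:groupUa}, however, is assembled from values $[\xi\eta\zeta]$ with $\xi\sim_a\eta\sim_a\zeta$; these triples live in the larger domain $a\times\Omega$ of the left prev and generically violate $\eta\sim_b\zeta$. So the property you actually need --- that $R_{zbya}$ is an automorphism of the prev product $[\quad]_a$ --- is strictly stronger than what Theorem \ref{th:self-distrib} delivers, and your change of variables is not justified by the references you cite. (This is also why the theorem sits in Section \ref{sec:distrib} and not already in Section \ref{sec:pair}: it genuinely uses the commuting-prev structure, as the failure of the \emph{left} distributive law for $U_a$ confirms.)

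The gap is fillable, and the repair is where the commuting-prev axioms enter. On a fixed $a$-class, $P^a_y$ is constant, so $R_{zbya}$ restricted to that class coincides with the single right translation $\rho := \rho^b_{\zeta_0\eta_0}$, where $\eta_0$ is the intersection of the class with $y$ and $\zeta_0 = P^b_z(\eta_0)$. By definition of a commuting pair and Lemma \ref{la:lrprev}, $\rho$ commutes with every $\lambda^a_{\xi\eta}$ and permutes $a$-classes; hence $\rho([\xi\eta\zeta]_a) = \rho\lambda^a_{\xi\eta}(\zeta) = \lambda^a_{\xi\eta}(\rho\zeta)$, and since $\lambda^a_{\xi\eta}$ and $\lambda^a_{\rho\xi,\rho\eta}$ both send $\rho\eta$ to $\rho\xi$ and $G^a$ acts simply transitively on each $a$-class, they coincide, giving $\rho([\xi\eta\zeta]_a) = [\rho\xi\,\rho\eta\,\rho\zeta]_a$. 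Because $(uvw)_a$ is computed class-by-class over $\Omega/a$ (see the remark following Corollary \ref{cor:groupUa}), this local statement suffices, and the rest of your argument --- Lemma \ref{la:MLR} to identify $R_{zbya}(u)=(uyz)_{ab}$, and the symmetric argument for $L_{xayb}$ acting on $U_b$ --- then goes through. Note that this repaired argument is the paper's proof in only slightly different clothing: the constancy of $\rho$ on $a$-classes is exactly what the canonical kernel $B^{bza}_y$ and the affine formula of Theorem \ref{ActionTheorem} encode.
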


\begin{proof} By the preceding result
\begin{eqnarray*}
(u-v+w) \cdot z  &=& (u-v+w) \circ B_y^{bza} + z 
\cr
& =& (u \circ B_y^{bza} + z ) - (v \circ B_y^{bza} + z) + (w \circ B_y^{bza} + z) \cr
&=&  (u \cdot z) - (v \cdot z) + (w \cdot z) 
\end{eqnarray*}
(see  \cite{Be12}, Th.\ 8.3,  for the -- similar -- proof in the group case). 
\end{proof}

Note that, in general, $U_a$ will not be left-distributive with respect to $(\quad)_{ab}$.
In this respect, this structure resembles the one of a {\em near-ring}, cf.\ \cite{Pi77} (cf.\  \cite{Be12}).

\section{The homogeneous  case, and duality}\label{sec:groupcase}

\subsection{The group case}
Assume $(\Omega,+)$ is a group and $A,B$ are two subgroups of $\Omega$.
Then $(\Omega,a,b,[\quad])$ as defined in example \ref{ex:homog!} is a pregroupoid, coming from two
commuting prev's $(a,b)$.
Thus all  of the preceding results apply (see  \cite{Be12} for more details).
The description of the set $(xyz)_{ab}$ from Theorem \ref{th:semitorsor2}  can be rewritten as in 
Equation (\ref{eqn:str}). The system of three equations given in (\ref{eqn:str}),
\begin{equation}\label{eqn:str2}
\eta = \alpha + \xi, \qquad
\eta = \zeta + \beta, \qquad
\omega = \xi - \eta + \zeta,
\end{equation}
is called the {\em structure equations}.  As a specific feature of
 the group case,  the structure equations can be rewritten in various equivalent ways, see 
 \cite{Be12}, Lemma 2.3.  The form given in (\ref{eqn:str2}) is closest to the interpretation 
 used in this work; but there are other forms suggesting other interpretations, invoquing  some sign changes
 (recorded by the ``sign-vectors'' listed in section 9 of \cite{Be12}).
 One may distinguish two different guises of the structure equations, having a quite different flavor:
 
 \subsection{Duality, and antiautomorphisms}
 On the one hand, there is a number of ways to rewrite (\ref{eqn:str2}) in the form of $3$ equations, two of them
 invoking $3$ variables and the last one invoking $4$ variables.
 These equations all ``have the same shape'', up to the possible sign changes (collected by the sign-vectors),
 and thus take account of a high degree of symmetry in the group case, 
  including the possibility of
 {\em antiautomorphisms} -- recall from \cite{BeKi10b} that, for abelian groups $(\Omega,+)$,
 orthocomplementation maps are indeed antiautomorphisms in this sense (and so far these are the
 only examples of antiautomorphisms we know of).

 \subsection{Triangle configurations}
 The structure equations can also be rewritten as systems of three equations, each of them
 invoking $3$ variables, e.g., 
\begin{equation}\label{EquationS8}
\left\{
\begin{matrix}
\eta  &=& \alpha + \xi \cr
\beta  &=& - \omega + \xi \cr
\zeta &=& \alpha + \omega
\end{matrix}
\right\}
\qquad
\left\{
\begin{matrix}
\alpha &=& \eta - \xi \cr
\zeta  &=& \eta - \beta \cr
\omega  &=& \xi - \beta
\end{matrix}
\right\}
 \qquad
\left\{
\begin{matrix}
\alpha &=& \zeta - \omega \cr
\xi   &=& \omega +  \beta \cr
\eta  &=& \zeta + \beta
\end{matrix}
\right\}
\end{equation}
This form of the equations suggests an entirely different interpretation of the system:
the variables appearing just once shall correspond to equivalence relations
(given by cosets) and those appearing twice correspond to sets whose elements are in the respective relation.
Thus we deal with {\em three} relations. Graphically, we have {\em triangles}, instead of parallelograms.
Neglecting the sign changes,
the following figures would then correspond to the three systems from (\ref{EquationS8}):

\begin{center}
\psset{xunit=0.5cm,yunit=0.5cm,algebraic=true,dotstyle=o,dotsize=3pt 0,linewidth=0.8pt,arrowsize=3pt 2,arrowinset=0.25}
\begin{pspicture*}(-2,-3.5)(18,3)
\psline(0.72,1.18)(-1.42,-1.94)
\psline(-1.42,-1.94)(2.44,-2)
\psline(0.72,1.18)(2.44,-2)
\psline(5.56,-2)(7.64,1.22)
\psline(7.64,1.22)(9.7,-2.08)
\psline(5.56,-2)(9.7,-2.08)
\psline(14.48,1.38)(12.52,-2.12)
\psline(12.52,-2.12)(16.72,-2.18)
\psline(14.48,1.38)(16.72,-2.18)
\pscircle(0.72,1.18){0.25}
\pscircle(-1.42,-1.94){0.25}
\pscircle(7.64,1.22){0.25}
\pscircle(5.56,-2){0.25}
\pscircle(9.7,-2.08){0.25}
\pscircle(12.52,-2.12){0.25}
\pscircle(16.72,-2.18){0.25}
\begin{scriptsize}
\psdots[dotstyle=*,linecolor=blue](0.72,1.18)
\psdots[dotstyle=*,linecolor=blue](-1.42,-1.94)
\rput[bl](-0.72,-0.12){$y$}
\psdots[dotstyle=*,linecolor=blue](2.44,-2)
\rput[bl](2.46,-2.5){\blue{$\omega$}}
\rput[bl](0.46,-2.36){$z$}
\rput[bl](1.88,-0.18){$b$}
\psdots[dotstyle=*,linecolor=blue](5.56,-2)
\psdots[dotstyle=*,linecolor=blue](7.64,1.22)
\rput[bl](6.28,-0.22){$a$}
\psdots[dotstyle=*,linecolor=blue](9.7,-2.08)
\rput[bl](8.84,-0.12){$z$}
\rput[bl](7.00,-2.9){$(xyz)_{ab}$}
\psdots[dotstyle=*,linecolor=blue](14.48,1.38)
\rput[bl](14.54,1.78){\blue{$\omega$}}
\psdots[dotstyle=*,linecolor=blue](12.52,-2.12)
\rput[bl](13.18,-0.16){$a$}
\psdots[dotstyle=*,linecolor=blue](16.72,-2.18)
\rput[bl](14.6,-2.7){$y$}
\rput[bl](15.68,-0.04){$x$}
\rput[bl](0.28,1.86){$x$}
\rput[bl](-1.78,-2.82){$a$}
\rput[bl](7.68,1.96){$y$}
\rput[bl](5.3,-2.82){$x$}
\rput[bl](9.56,-2.88){$b$}
\rput[bl](12.38,-3){$z$}
\rput[bl](16.72,-3){$b$}
\end{scriptsize}
\end{pspicture*}
\end{center}

\nin
Unlike the interpretation invoking parallelograms, these  triangle configurations do not need at all the underlying
group or pregroupoid structure of $\Omega$: they make sense  on a ``naked set''. Thus
such interpretation seems even more basic  than the preceding one.
Indeed, whereas for usual mappings there is essentially just one ``type of commuting triangles'', 
for binary relations the term 
``commuting triangle'' may have several different meanings, and it looks as if the structure equations were related to this.
To our knowledge, such questions  have so far only been investigated from the point of view of computer
science, see e.g., the ``diagram chase for relations'', 
 \cite{Struth}.
It would certainly be worth investigating  this from a more theoretical and algebraic point of view.


\appendix

\section{The familily of associoids}\label{APP}

We assume that the reader is familiar with the most important binary associative structures:
{\em (semi-)groups, monoids, groupoids, algebras}. In this appendix we give definitions and fix terminology concerning related
{\em ternary associative structures}. Since their laws need not be defined everywhere, we use the suffix -oid,
and call {\em associoids} the most general family of such structures.
They arise naturally in various contexts, but often are are 
not recognized as such (e.g.,  
principal bundles are rarely seen as ternary algebraic structures).

\subsection{Associoids and semi-associoids}

\begin{definition}\label{def:ternaryproduct}
Let $M$ be a set.
A {\em partially defined ternary product on $M$} is a map,
defined on some non-empty subset $D \subset M^3$ (called {\em domain} of $\mu$),
$$
\mu: D \to M, \quad (x,y,z) \mapsto \mu(x,y,z)
$$
According to context,  we use other notation for $\mu(x,y,z)$, such as $(xyz)$ or 
$[xyz]$.
\end{definition}

\begin{definition} 
A {\em semi-associoid} is a set $M$ together with
a partially defined ternary product $(xyz)=\mu(x,y,z)$ on $M$ satisfying  the {\em para-associative law}:
\[
(xy(zuv))= (x(uzy)v)=((xyz)uv)\,   \tag{PA}
\]
that is, if all terms on one side of the equation are defined, then so are those on the other sides,
and we have equality. For instance,  if $(z,u,v) \in D$ and  $(x,y,(zuv)) \in D$, 
then $(u,z,y) \in D$ and $(x,(uzy),y) \in D$, and the first equality holds, 
and so on.
An {\em associoid} is a semi-associoid satisfying  the 
\emph{idempotent law}: 
\[
(xxy)=y, \quad (wzz)=w  \, , \tag{IP}
\]
whenever $(x,x,y) \in D$ and $(w,z,z) \in D$. 
\end{definition}

\begin{lemma}
The associoid axioms  (PA) $\land$ (IP) are  equivalent to (Ch) $\land$ (IP):
\begin{enumerate}
\item[(Ch)]
left Chasles relation: $(xy(yuv))=(xuv)$, and
\\
right Chasles relation: $((xyz)zv)=(xyv)$;
\item[(IP)] idempotency:
$(xxy)=y=(yxx)$ 
\end{enumerate}
\end{lemma}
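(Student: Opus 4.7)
The forward direction (PA) $\wedge$ (IP) $\Rightarrow$ (Ch) is a one-line calculation: by (PA), $(xy(yuv)) = ((xyy)uv)$, and by (IP) this equals $(xuv)$; symmetrically $((xyz)zv) = (xy(zzv)) = (xyv)$.

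For the converse, the task is to derive from (Ch) and (IP) both equalities hidden in (PA). The first, $(xy(zuv)) = ((xyz)uv)$, falls out of a ``squeeze'': (Ch2) read backwards (inserting the pivot $z$) gives $(xy(zuv)) = ((xyz)z(zuv))$, and then (Ch1) applied with leading term $(xyz)$ collapses the right side to $((xyz)uv)$.

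The second, or ``middle-swap'' equality $((xyz)uv) = (x(uzy)v)$, is the delicate part. My plan is to linearize it through a group localization: fix an anchor $e \in M$ for which all relevant ternary products are defined, and set $a *_e b := (aeb)$. The first equality together with (IP) makes $*_e$ associative with two-sided unit $e$; the Chasles relations plus (IP) then show that $a^{-1} := (eae)$ is a two-sided $*_e$-inverse of $a$ (indeed $a *_e a^{-1} = (ae(eae)) = (aae) = e$ by (Ch1) followed by (IP), and $a^{-1} *_e a = ((eae)ea) = (eaa) = e$ by (Ch2) followed by (IP)), so $(M,*_e)$ carries a group structure. A short calculation then yields $(xyz) = x *_e y^{-1} *_e z$: expand $x *_e y^{-1} *_e z = ((xe(eye))ez) = ((xye)ez) = (xy(eez)) = (xyz)$, using (Ch1), the already-proven first equality, and (IP). Once the ternary product is written in heap form $xy^{-1}z$ inside a group, all three members of (PA) translate to the single group word $xy^{-1}zu^{-1}v$, and the middle-swap identity is automatic.

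The main obstacle I anticipate is the partial definedness of the product: each rewrite above is a formal term manipulation but must be justified inside the domain $D$. I plan to work throughout in the sub-associoid generated by the given elements $x,y,z,u,v$ together with the anchor $e$, and propagate definedness step by step using the definedness clauses built into (Ch), (IP), and the already-proven first para-associative equality.
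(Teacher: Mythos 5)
Your forward direction and your derivation of the outer equality $(xy(zuv))=((xyz)uv)$ from (Ch) and (IP) are correct, and the latter coincides with the computation in the paper's proof (the ``squeeze'' $(xy(zuv))=((xyz)z(zuv))=((xyz)uv)$). You are also right to flag the middle-swap equality $((xyz)uv)=(x(uzy)v)$ as the delicate point; the paper's own one-line proof in fact only writes down the outer equality.

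However, your plan for the middle swap has a genuine gap: the localization $a*_e b:=(aeb)$, $a^{-1}:=(eae)$ presupposes an anchor $e$ for which all the required triples lie in the domain $D$, and in a general associoid no such $e$ exists. Concretely, in a pregroupoid (say the pair pregroupoid on $E\times F$) the triple $(a,e,b)$ belongs to $D$ only when $a\sim_a e$ and $e\sim_b b$; to form both $x*_e y^{-1}$ and $z*_e u^{-1}$ you would need $e$ to lie simultaneously in $[x]_a\cap[y]_b$ and in $[z]_a\cap[u]_b$, which fails as soon as $[x]_a\neq[z]_a$ (and here $x\sim_a y$ while $y\sim_b z$, so the $a$-classes of $x$ and $z$ are in general distinct). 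Restricting to the sub-associoid generated by $x,y,z,u,v,e$ does not help, since it inherits the same partial domain, so $(M,*_e)$ is simply not a group. The repair is to stay purely equational: by left Chasles $((xyz)\,u\,(uzy))=((xyz)\,z\,y)$, by right Chasles this equals $(xyy)$, which is $x$ by (IP); now apply right Chasles to $\bigl(((xyz)\,u\,(uzy))\,(uzy)\,v\bigr)$ to get $(x\,(uzy)\,v)=((xyz)\,u\,v)$. This uses only (Ch), (IP) and the conditional definedness clauses (all intermediate triples are in $D$ whenever the two sides of the middle swap are), so it is valid for partially defined products, whereas your group-theoretic argument only applies in the everywhere-defined (semitorsor/torsor) case.
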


\begin{proof}
 (IP) $\land$ (Ch) implies (PA):
$(xy(uvw)) = ((xyu)u(uvw))=((xyu)vw)$, and conversly
(IP) $\land$ (PA)  implies  (Ch) by taking $y=z$.
\end{proof}

\nin
In his work \cite{Ko82, Ko05, Ko07}, Kock uses (IP) (``unit law'') $\land$ (Ch) (``cancellation law'') as basic axioms;
note, however, that  the  lemma  does not carry over to semi-associoids.
There is a number of other useful identities valid in general associoids, most of them well-known for torsors (see below), for instance,
\begin{equation}\label{eq:autos}
(xy(uvw)) = ((xyu)vw)= ((xyu) (yx(xyv)) w) 
 =  ((xyu) (xyv) (xyw)) , 
\end{equation}
which, in case the product is defined everywhere, means that left translations $\ell_{xy}$ are endomorphisms.
Similarly, right translations satisfy the endomorphism property.

\subsection{Torsors and semitorsors} \label{App:A}
This is the case of {\em everywhere defined} products:

\begin{definition}
\label{TorsorDefinition}
A {\em semitorsor} is a semi-associoid   with an everywhere defined ternary product map
(often denoted by $G^3 \to G$, $(x,y,z) \mapsto (xyz)$), and
a  {\em torsor} is a semitorsor satisfying the idempotent law.
\footnote{
Other terms used in the  literature instead of  ``torsor'' are {\em groud},
{\em heap}, {\em pregroup}, {\em herd}  or  {\em principal homogeneous space}.} 
\end{definition}

Fixing the middle element $y$ in a torsor $G$, we get a group law
$xz:=(xyz)$ with neutral element $y$ and inversion $x\inv = (yxy)$, and every group is obtained in this way;
thus torsors are for groups what affine spaces are for vector spaces.
Similarly, semitorsors give rise to semigroups, but the converse is more
involved. 
Two kinds of examples will play a basic role in this work:

\begin{example} [Relation semitorsors and mapping torsors]\label{ex:RelTor}
The set
 $M:=\cR(\Omega',\Omega)$ of binary relations between $\Omega$ and $\Omega'$ is a semitorsor:
 the everywhere defined ternary product
$[RST] := RS\inv T$ is  para-associative, by associativity of binary composition and involutivity of reversion.
Likewise,  the set $\Bij(X,Y)$ of bijections between two sets $X$ and $Y$,
with the law $(xyz)=xy\inv z$, is a torsor. 
\end{example}

\begin{example}[Torsors of sections]
The set of sections of a principal bundle is a torsor, see cor.\  \ref{cor:groupUa}.
\end{example}

 In every semitorsor, we introduce 
{\em left-, right-} and {\em middle multiplication operators}
\begin{equation}
\ell_{x,y}(z) = (xyz) = r_{z,y}(x) = m_{x,z}(y).
\end{equation} 
These are everywhere defined operators $\ell_{x,y}:M\to M$, etc., and
all defining identitities may be rewritten in terms of these operators. In particular,
 the   left Chasles relation reads  $\ell_{ab}\ell_{bc}=\ell_{ac}$, and the right Chasles relation becomes
 the ``transplantation formula''
\begin{equation}
\ell_{xy}= \ell_{(xyz)z}.
\end{equation}
See appendices of \cite{BeKi10, Be13} for some more remarks on general (semi)torsors.

\subsection{Pregroupoids and semi-pregroupoids}
From now on, the domain of definition $D \subset M^3$ of the ternary product  will depend on the choice of a pair of
equivalence relations $(a,b)$ on $M$. We symbolize this situation by
\begin{equation}
\begin{matrix}
& & M & & \cr & \swarrow & & \searrow & \cr M/a & & & & M/b 
\end{matrix}
\end{equation}
Consider the following four sets
\begin{eqnarray}\label{eqn:Domains}
 M\times_a M\times M  &=& a \times M  =   \{ (x,y,z) \in M^3 \mid x \sim_a y  \}
 \cr
 M\times M\times_b  M  &=&
 M \times b  =  \{ (x,y,z) \in M^3 \mid   y\sim_b z \}
 \cr
 M\times_a M\times_b M & =&
 (a \times M) \cap (M \times b) 
 \cr
 & = & \{ (x,y,z) \in M^3 \mid x \sim_a y \mbox{ and }  y\sim_b z \}
\cr 
(a \times M ) \cup (M \times b) &= & \{ (x,y,z) \in M^3  \mid x \sim_a y
\mbox{ or }  y\sim_b z \} .
\end{eqnarray}

\begin{definition}\label{def:pregroupoid}
A {\em semi-pregroupoid} is given by a set $M$ together with a pair $(a,b)$ of  equivalence relations
on $M$ and a para-associative ternary multiplication
$$
M \times_a M \times_b M 
 \to M, \quad
(x,y,z) \mapsto [xyz] 
$$
such that, for all $(x,y,z) \in M\times_a M \times_b M$, 
\[
[xyz] \sim_a z, \qquad [xyz] \sim_b x .
\tag{C}
\]
If, moreover, the multiplication satisfies the idempotent law, then $M$ is called a {\em pregroupoid}.\footnote{This terminology
is due to Kock, see \cite{Ko05, Ko07}. Johnstone uses {\em herdoid}, see \cite{Jo91}.}
\end{definition}

\nin
Note that Condition (C) ensures that our requirement on the domain $D$ from Definition \ref{def:ternaryproduct}  is fulfilled.

\begin{lemma}\label{la:abcommute}
If $(M,a,b,[\quad])$ is a semi-pregroupoid, then $a$ and $b$ commute as relations: $ab=ba$.
\end{lemma}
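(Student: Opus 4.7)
The plan is to show directly that $ab\subset ba$ and $ba\subset ab$, using only the axiom (C) and the symmetry of the equivalence relations $a,b$. The key observation is that condition (C) says the value $[xyz]$ is simultaneously $b$-related to $x$ and $a$-related to $z$, so the ternary product literally \emph{converts} an ``$a$-then-$b$'' path from $x$ to $z$ into a ``$b$-then-$a$'' path, and vice versa. No use of (PA) or (IP) will be required.

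First I would unwind the definitions. A pair $(x,z)\in ab$ means that there exists $y\in M$ with $x\sim_a y$ and $y\sim_b z$. In particular, $(x,y,z)$ lies in the domain $M\times_a M\times_b M$, so by the semi-pregroupoid structure the product $[xyz]$ is defined. Set $y':=[xyz]$. By condition (C) we have $y'\sim_b x$ and $y'\sim_a z$, hence (by symmetry of $a$ and $b$) $x\sim_b y'$ and $y'\sim_a z$, which exhibits $(x,z)\in ba$. This gives the inclusion $ab\subset ba$.

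For the reverse inclusion, suppose $(x,z)\in ba$, so there exists $y'$ with $x\sim_b y'$ and $y'\sim_a z$. By symmetry, $z\sim_a y'$ and $y'\sim_b x$, so $(z,y',x)$ is in the domain $M\times_a M\times_b M$, and the product $[zy'x]$ is defined. Let $y:=[zy'x]$. Applying (C) once more, $y\sim_b z$ and $y\sim_a x$, that is, $x\sim_a y$ and $y\sim_b z$, so $(x,z)\in ab$. Combining both inclusions yields $ab=ba$.

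There is really no obstacle here: everything reduces to the single ``exchange'' step provided by (C). The only thing to be a little careful about is the asymmetric shape of the domain of $[\,\cdot\,\cdot\,\cdot\,]$ (first pair in $a$, second pair in $b$); in the second half of the argument we exploit the symmetry of $a$ and $b$ to rewrite the hypothesis in the correct order before invoking the product.
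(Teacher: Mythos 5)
Your proof is correct and follows essentially the same route as the paper: produce the witness $[xyz]$ and read off condition (C). In fact your write-up is slightly more complete than the paper's, which only spells out the inclusion $ab\subset ba$; your handling of the reverse inclusion via the reversed triple $(z,y',x)$ is exactly the right way to deal with the asymmetric domain.
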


\begin{proof}
Let $(xz) \in ab$, so there is $y \in M$ with $(x,y) \in a$, $(y,z) \in b$.
Thus $w:=[xyz]$ is defined, and by (C), we have
$(x,w)\in b$, $(w,z) \in a$, showing that $(x,z) \in ba$.
\end{proof}

\nin
The condition $ab=ba$   may be seen as a sort of ``integrability condition'' -- indeed, 
it is well-known (and easy to prove) that $a$ and $b$ commute iff the new relation
$ab$ is again an equivalence relation.

\begin{example}[The relation semi-pregroupoid]
Let $M:=\cR(\Omega',\Omega)$ be the set of binary relations between $\Omega$ and $\Omega'$.
For two relations $R,S \in \cR(\Omega',\Omega)$ write
$$
R \sim_a S \, :\Leftrightarrow \, \dom(R) = \dom(S), \qquad
R \sim_b S \, :\Leftrightarrow \, \im(R) = \im(S) .
$$
The para-associative law for the ternary product
$[RST] := RS\inv T$ is always satisfied (example \ref{ex:RelTor} above).
Moreover, it follows directly from definitions that its restriction to 
$(a\times M) \cap ( M \times b)$ satisfies condition (C). The idempotent law is not satisfied.
\end{example}

\begin{example}[Pair semi-pregroupoid]\label{ex:pairpregroupoid}
Assume $M = E \times F$, $a$, $b$ the equivalence relations given by fibers of $\pr_1,\pr_2$,
so
$$
M\times_a M \times_b M = \{ (x,y,z) \in M^3 \mid x_1 = y_1, y_2 = z_2 \}
$$
and then the algebraic law is already completely determined by Condition (C):
$$
[xyz ] = (z_1,x_2) .
$$
Put differently: whenever $a \top b$, there is just one possible semi-pregroupoid structure on $(M,a,b)$
(which is in fact a pregroupoid structure). 
It generalizes the {\em pair groupoid} (case $E=F$, cf.\ \cite{CW}), and we will use the same term.
As for the case of groupoids, for any (semi-) pregroupoid, the canonical map
$$
M \to M/a \times M/b, \quad x \mapsto ([x]_a,[x]_b)
$$
is a morphism of (semi-) pregroupoids onto a pair groupoid (see \cite{CW}, 13.2). 
\end{example}

\begin{example}[Pseudogroup of local bijections]
Let $M$ be the set of locally defined bijections between $\Omega$ and $\Omega'$, that is, 
$$
M=\{ R \in \cR(\Omega',\Omega) \mid \, \forall x \in \dom(R) : \exists^! y \in \im(R) : (y,x) \in R \} 
$$
Then $M$ is sub-semi-pregroupoid of the one given in the preceding example, and it satisfies the idempotent law,
hence is a pregroupoid. 
In a context of manifolds and smooth maps, it corresponds to what one calls a {\em pseudogroup of local
diffeomorphisms}.
\end{example}

\begin{example}[Homogeneous pregroupoids]\label{ex:homog!}
Let $G$ be a group (not assumed commutative, but written additively) and
$A,B$ two subgroups of $G$.
Define the {\em right equivalence relation}
$x \sim_a y$  iff $A+x=A+y$  (so equivalence classes are right  cosets of $A$),
and the {\em left  equivalence relation}
$x \sim_b y$ iff $x+B=y+B$ (so equivalence classes are left  cosets of $B$),
and let
$[xyz] = x - y +z$
(which is the usual torsor law of $G$, and thus is para-associative and idempotent).
Then $(G,a,b,[\quad])$ is a pregroupoid which we denote by
\begin{equation}
\begin{matrix}
G/B & \leftarrow & G & \rightarrow A \backslash G 
\end{matrix} .
\end{equation}
Indeed, the compatibilty condition (C) is easily checked.
One should note that the structure of $(G,A,B,[\quad])$ is in fact much richer: it is  a 
pregroupoid with a certain additional structure, which is witnessed by the term ``homogeneous''.
(A homogeneous pregroupoid is transitive in the obvious sense, see subsection \ref{ssec:trans};
and a transitive pregroupoid is  a quotient of some $(G,A,B)$, where $G$ is a, in  general not uniquely
determined, group coming from transitive structure.)
Note also that this pregroupoid structure does in general not come from a groupoid structure:
the quotient sets $G/B$ and $A \backslash G$ need not be isomorphic as sets.
\end{example}

\begin{example}
If $a=b$, then a pregroupoid $(\Omega,a,a,[\quad])$ is the same thing as a {\em torsor bundle over $\Omega/a$}.
A torsor bundle together with some fixed section is a {\em group bundle}.
In particular, {\em vector bundles} are a special kind of associoids. 
\end{example}

\subsection{Principal equivalence relations (prev's)}\label{App:B}
This is an algebraic concept corresponding to the one of {\em principal bundle}:

\begin{definition}
A {\em (left)  principal equivalence relation} (abbreviated: {\em (left) prev})
$(M,a,[\quad])$ is an associoid $(M,D,[\quad])$ with domain
$D = a \times M$ given by an equivalence relation $a$ on $M$ and such that
$$
\forall (x,y)\in a, \forall z \in M: \qquad ([xyz],z) \in a .
$$
In other words, it is a pregroupoid defined by $(a,b)$ with $b = \All_M$.
Similarly, 
 a {\em right principal equivalence relation}  
on a set $M$ is an associoid with domain $M \times b$, that is, a
pregroupoid with $a = \All_M$.
\footnote{Once more, there may be  conflicts of terminology: 
what is called ``torseur'' in \cite{DG}  corresponds to what we would call a
``prev in the category of schemes''. }
\end{definition}

\nin 
The good thing about left prev's is that the {\em left translation operators}
$\lambda^a_{xy}:z \mapsto [xyz]$ are everywhere defined.
We often write $[xy;z]_a:=[xtz]$ and view  a left prev   as an equivalence relation
$a$ together with a ``left action map'' 
$$
\lambda^a : a \times M  \to M, \quad ((\xi,\eta),\zeta) \mapsto \lambda^a_{\xi \eta}(\zeta) =: [\xi \eta;\zeta]_a 
$$
such that

\begin{enumerate}
\item
$\lambda^a_{xy}$ preserves each $a$-equivalence class, 
that is, $a \circ \lambda^a_{xy}=a=\lambda^a_{xy}\circ a$ under relational composition,
\item
 for all $(x,y)\in a$ and $(v,w) \in a$, we have 
 
 $\lambda^a_{xx} = \id_\Omega$, $\lambda^a_{xy}(y)=x$,

$
\lambda_{xy} \circ  \lambda _{vw} = \lambda_{[xyv],w} = \lambda_{x,[wvy]} ,
$
\end{enumerate}

\nin which imply {\em Chasles' relation} $\lambda_{xy} \circ \lambda_{yz}=\lambda_{xz}$
and $(\lambda_{xy})\inv = \lambda_{yx}$ as well as
\begin{equation}
\lambda^a_{xy} = \lambda^a_{u,\lambda^a_{xy}(u)}.
\end{equation}
Since each equivalence class of $a$ is a torsor, it follows that the {\em left translation group}
\begin{equation}
G:=G^a := \{ \lambda^a_{xy} \mid \, (x,y) \in a \}
\end{equation}
acts simply transitively on each $a$-class. 
Thus $(M,G^a,M/a)$ is an ``abstract principal bundle''.

\begin{lemma} 
There is a bijection between
left prev's $(M,a,[\quad]_a)$ on $M$ and (abstract) left principal bundles $(P,G,B)$ with total space $P=M$.
 \end{lemma}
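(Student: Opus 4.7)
The plan is to exhibit explicit maps in both directions and check they are mutually inverse. Since the \emph{forward} direction (prev $\Rightarrow$ principal bundle) is essentially already carried out in the paragraph preceding the lemma (the operators $\lambda^a_{xy}$ yield a group $G^a$ acting simply transitively on each $a$-class and permuting nothing between classes), the bulk of the work is the backward direction and the verification that the two constructions invert one another.

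Forward construction: Given $(M,a,[\quad]_a)$, set $P := M$, $B := M/a$, and $G := G^a = \{\lambda^a_{xy} \mid (x,y) \in a\}$ with the natural action $g.z := g(z)$. The Chasles identities and the formula $\lambda_{xy}\circ\lambda_{vw} = \lambda_{[xyv],w}$ recalled just before the lemma show that $G$ is indeed a group of bijections of $M$, that each $a$-class is stable under this action, and that the action on each class is simply transitive (via $\lambda_{xy}$ sending $y$ to $x$).

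Backward construction: Given an abstract left principal bundle $(P,G,B)$ with $P=M$ (so $G$ acts on $M$, each orbit of $G$ is a class of an equivalence relation $a$, and the action on each orbit is simply transitive), define for any $(x,y,z)$ with $x \sim_a y$:
\[
[xyz]_a := g_{xy}.z , \qquad \text{where $g_{xy}\in G$ is the unique element with } g_{xy}.y = x.
\]
Since $G$ preserves orbits, $[xyz]_a \sim_a z$, so condition (C) for a (left) prev holds. The idempotent law $[xxz] = e.z = z$ and $[wzz]=w$ (take $g_{wz}$, which sends $z$ to $w$) are immediate from uniqueness. For para-associativity, a direct calculation using $g_{xy}g_{vw} = g_{x,g_{xy}^{-1}\cdot\,\cdot}$-type identities shows, for $x\sim_a y$ and $u\sim_a v$, that
\[
[xy[uvw]] \;=\; g_{xy}.(g_{uv}.w) \;=\; (g_{xy}g_{uv}).w,
\]
and the same group element $g_{xy}g_{uv}$ arises (by uniqueness) as the translation witnessing $[[xyu]vw]$ and $[x(vuy)w]$, proving (PA). Hence $(M,a,[\quad]_a)$ is a left prev.

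Mutual inverseness: Starting from a prev, going forward then backward returns the identity because by construction $g_{xy}$ in the reconstructed product is precisely $\lambda^a_{xy}$, so $g_{xy}.z = \lambda^a_{xy}(z) = [xyz]_a$. Going backward then forward from a principal bundle returns the identity because the orbit equivalence of $G^a$ on $M$ coincides with $a$, and the map $G \to G^a$ sending $g$ to $\lambda^a_{g.y,y}$ (for any chosen $y$) is a group isomorphism intertwining the two actions, by freeness of the $G$-action.

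The main obstacle is mostly bookkeeping rather than conceptual: one must keep track of which equivalence classes the various elements live in throughout the para-associativity verification in the backward direction, and justify that the group element reconstructed from the new ternary operation in the three guises coming from (PA) is indeed the same element $g_{xy}g_{uv}$. Once one notes that uniqueness of the group element realizing a prescribed displacement inside an orbit forces all three to agree, the verification is straightforward.
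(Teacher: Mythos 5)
Your proposal is correct and follows exactly the paper's route: forward via the left translation group $G^a$, backward via $[xy;z]:=g_{xy}.z$ with $g_{xy}$ the unique group element sending $y$ to $x$. The paper's own proof is just a terser version of this; your additional verifications of (PA), (IP), condition (C), and mutual inverseness are the details the paper leaves implicit, and they check out.
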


\begin{proof}
One direction is explained above; conversely, given 
$(P,G,B)$, let $a$ be the fiber relation of the canonical projection $P \to B$
and define, for $(x,y) \in a$ and  $z \in P$,
$[xy;z] := \lambda_{xy}(z)$, where
$g:=\lambda_{xy} \in G$ is the unique element such that $g(y)=x$. 
\end{proof}

Similarly, a right prev on $M$ is
 an equivalence relation
$b \in \cE_\Omega(M)$ together with a 
 ``right action map'' satisfying the para-associative and idempotent laws,  
$$
\rho^b : \Omega \times b  \to \Omega, \quad (x,(y,z)) \mapsto \rho^b_{zy}(x) =: [x;yz]_b .
$$

\begin{example} [The homogeneous pregroupoid revisited]
Let $\Omega = G$ and $A,B,a,b$ as in example \ref{ex:homog!}.
Then $G \to G/B$ and $G\to A \backslash G$ are principal bundles, corresponding to left- and
right prevs given by $[xy;z]_a = x-y+z = [x;yz]_b$.
\end{example}

\subsection{Commuting left and right prev's}\label{App:C}

\begin{definition}
A {\em pair $(a,b)$ of commuting left and right prev's} is an associoid given by a pair $(a,b)$ of 
equivalence relations on $M$ and the ternary
product $[xyz]$ defined on the domain
$$
D_{a,b}:= (a \times M)\cup (M \times b) 
$$
such that
$$
\forall (x,y) \in a, \forall z \in M: \quad [xyz] \sim_a z, 
\qquad
\forall x \in M, \forall (y,z) \in b: \quad [xyz] \sim_b x.
$$
In other words, $(M,a,[xy;z]_a)$ is a left prev and $(M,b,[x;yz]_b)$ a right prev such that the natural
compatibility conditions given below are satisfied. 
\end{definition}

\nin
Since $(a \times M)\cap (M \times b) \subset (a \times M)\cup (M \times b)$,
it is clear that a pair of commuting prev's defines, by restriction of the product map to a smaller domain, a pregroupoid (but not
every pregroupoid is of this form). The definition may be stated
equivalently: a pair of commuting prev's is given by
a left prev $a$ and a right prev $b$ on $\Omega$ that are  {\em compatible} in the sense that
$$
 (x,y)\in a, (y,z) \in b  \quad \Rightarrow  \quad [xy;z]_a = [x;yz]_b ,
$$
and which, moreover,  {\em commute} in the sense that
$$
\forall (x,y) \in a, (w,z) \in b : \qquad 
\lambda^a_{xy} \circ \rho^b_{uv} = \rho^b_{uv}\circ  \lambda^a_{xy} .
$$
The last condition can be rewritten
 $[xy;[v;wz]_b]_a = [[xy;v]_a;wz]_b $, for all $v \in \Omega$.
By compatibility, it  implies, if moreover $v \sim_a w$ and $v \sim_b y$, 
$$
[xy;[v;wz]_b]_a = [x[w;vy]_b;z]_a = [x[w;vy]_b; z]_a  .
$$

\begin{lemma} \label{la:lrprev}
If $(a,b)$ is a pair of commuting left and right prev's,  then
$a$ and $b$ commute as relations:   $ab=ba$, and 
 left translation operators $\lambda_{xy}^a$ permute equivalence classes of $b$, and vice versa:
$$
x\sim_a y, \,  u \sim_b v \quad  \Rightarrow \quad 
 [xy;u]_a \sim_b [xy;v]_a \mbox{ and }  [x;uv]_b \sim_a [y;uv]_b .
$$
\end{lemma}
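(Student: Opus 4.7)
The plan is to derive both assertions from the compatibility and commutation axioms of the pair $(a,b)$, together with the defining property (C) of left- and right-prev's.

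For the commutation $ab = ba$, I would start with $(x,z) \in ab$, witnessed by some $y$ with $(x,y) \in a$ and $(y,z) \in b$. Compatibility ensures that $w := [xy;z]_a = [x;yz]_b$ is unambiguously defined; property (C) of the left prev then gives $w \sim_a z$, and property (C) of the right prev gives $w \sim_b x$. Using symmetry of $a$ and $b$, this reads $(x,w) \in b$ and $(w,z) \in a$, so $(x,z) \in ba$. The reverse inclusion $ba \subset ab$ is obtained by applying the identical construction to $(z,x) \in ab$ after flipping via symmetry of the two equivalence relations.

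For the second assertion, the key ingredient is the torsor-like identity $\rho^b_{uv}(v) = [v;vu]_b = u$, which is a direct instance of the idempotent law $(xxy)=y$ applied in the right prev (and is well-defined since $u \sim_b v$ is symmetric). Applying the commutation law $\lambda^a_{xy} \circ \rho^b_{uv} = \rho^b_{uv} \circ \lambda^a_{xy}$ to the element $v$, I obtain
\[
[xy;u]_a \;=\; \lambda^a_{xy}(u) \;=\; \rho^b_{uv}\bigl(\lambda^a_{xy}(v)\bigr) \;=\; \bigl[[xy;v]_a;\, vu\bigr]_b.
\]
But property (C) for the right prev tells us that any element of the form $[w;vu]_b$ is $\sim_b$-related to $w$. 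Hence $[xy;u]_a \sim_b [xy;v]_a$, as required. The dual statement, that right translations $\rho^b_{uv}$ permute $a$-classes, follows by the mirror-image argument, swapping left with right and $a$ with $b$.

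The main obstacle I anticipate is purely bookkeeping: one has to check at each step that the bracketed expressions actually lie in the domain $(a \times M) \cup (M \times b)$ on which $[\quad]$ is defined, and in particular that the two sides of the commutation square are simultaneously meaningful. Once this is done, the only genuine algebraic input is the identity $\rho^b_{uv}(v)=u$, which is the familiar torsor relation carried over to the partially-defined right prev.
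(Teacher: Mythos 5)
Your proposal is correct and follows essentially the same route as the paper: the commutation $ab=ba$ is exactly the argument of Lemma \ref{la:abcommute} (which the paper simply cites), and your computation $[xy;u]_a=\lambda^a_{xy}\rho^b_{uv}(v)=\rho^b_{uv}\lambda^a_{xy}(v)\sim_b[xy;v]_a$ is, up to swapping the roles of $u$ and $v$, the paper's own one-line proof. The only genuine algebraic inputs — the idempotent identity $\rho^b_{uv}(v)=u$, the commutation of $\lambda^a$ with $\rho^b$, and property (C) — are identified correctly.
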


\begin{proof}
That $ab=ba$ has been seen above (Lemma \ref{la:abcommute}).
The other claim follows from 
 $\lambda^a_{xy}(v) = \lambda^a_{xy} \rho^b_{vu}(u) =
\rho^b_{vu} \lambda^a_{xy} (u) \sim_b \lambda^a_{xy}(u)$.
\end{proof}

\begin{definition}\label{def:Auta}
The {\em automorphism group of an equivalence relation $a$} is the group
permuting equivalence classes of $a$:
$$
\Aut(a) = \{ g \in \Bij(M) \mid \, g \circ a = a \circ g \} 
$$
(so that the induced map $[g]:M/a \to M/a$ is well-defined), and
the {\em group of strict automorphisms of $a$} is the group preserving each equivalence class of $a$:
$$
\Aut_1(a) = \{ g \in \Bij(M) \mid \, g \circ a = a = a \circ g \}
$$
(so that the induced map is $\id_{M/a}$).
\end{definition}

Thus by the lemma we have $\lambda^a_{xy} \in \Aut_1(a) \cap \Aut(b)$ and
$\rho^b_{uv} \in \Aut_1(b) \cap \Aut(a)$.

 \begin{example} [Homogeneous pregrouids]
 Let $A,B$ be two subgroups of a group $\Omega$.
 Then the left prev $b$ defined by $B$ commutes with the right  prev $a$ defined by $A$. 
 Note that $ab=ba$ then is again an equivalence relation (its equivalence classes 
 are the double cosets $A\backslash \Omega /B$), but it is not a prev.
 \end{example}

\subsection{From pregroupoids to groupoids: back and forth}
A {\em groupoid} is a pregroupoid $(M,a,b)$ such that $M/a$ and $M/b$ are isomorphic and having a distinguished {\em set of units}
and an {\em inversion map}, and the whole object is encoded by a {\em binary} partially defined operation,
rather than by a ternary one. It is a remarkable fact that such a structure arises simply by 
distinguishing an arbitrary {\em bisection} in a pregroupoid, which defines the set of units.
The following lemma is contained in the work of Johnstone \cite{Jo91}, although it is not formally stated 
there.\footnote{loc.\ cit., p.\ 103: ``Conversely, given a herdoid for which $A$ and $B$ happen to coincide, any
choice of simultaneous splitting for $\alpha$ and $\beta$ (if such a thing exists) equips it with a groupoid structure.''}

\begin{definition}
A {\em bisection} of a pregroupoid $(M,a,b)$ is a bisection of $(a,b)$, i.e., 
 an element of $U_{ab}$, and a {\em local bisection} is an element of $U_{ab}^\loc$  (cf.\ eqn.\  (\ref{eqn:Uabloc})).
\end{definition}

\nin
In general, a pregroupoid does not admit bisections (since $M/a$ and $M/b$ need not be equinumerous).
For a fixed bisection $s$, and for any $g \in M$, we let $a_g:= [g]_a \cap s$ and $b_g := [g]_b \cap s$.
Then we define {\em the inverse of $g$ (with respect to $s$)} by 
\begin{equation}
g\inv := [a_g \, g \, b_g],
\end{equation}
and, if  $b_g = a_h$, then we define the {\em (binary) composition of $g$ and $h$} by
\begin{equation}
g \circ h := [g b_g h] = [g a_h h ].
\end{equation}

\begin{lemma}
Let $(M,a,b)$ be a pregroupoid with a distinguished bisection $s$.
Then $M$ with inversion and partially defined binary composition as above becomes a groupoid
with set of units $s$. Conversely, every groupoid arises in this way from a pregroupoid.
Thus 
groupoids are the same as as pregroupoids $(M,a,b)$, together with some fixed bisection $s$,  representing 
the  set of units.
\end{lemma}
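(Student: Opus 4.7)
The plan is to show the correspondence goes both ways and is mutually inverse. In essence, a bisection $s$ of the pregroupoid $(M,a,b)$ provides a simultaneous splitting of the two projections $\pi_a : M \to M/a$ and $\pi_b : M \to M/b$, which identifies $M/a \cong s \cong M/b$; this identification is exactly what lets us convert the ternary product $[xyz]$ (which needs the ``bookkeeping'' $x \sim_a y \sim_b z$) into a partial binary product whose domain is controlled by a single object set, namely $s$.

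First I would verify the forward direction. Given $(M,a,b,s)$, I would check that for $g,h \in M$ with $b_g = a_h$, the triple $(g, b_g, h) = (g, a_h, h)$ lies in the domain $M\times_a M\times_b M$ (since $g \sim_b b_g$ by construction of $b_g$, and $a_h \sim_a h$), so $g \circ h := [g\, b_g\, h]$ is well defined, and $[g\, b_g\, h] = [g\, a_h\, h]$ since $b_g = a_h$. Next I would verify the groupoid axioms: unit laws follow from the idempotent law (IP), e.g.\ if $e = a_g \in s$, then $e \circ g = [e\, e\, g] = g$, and similarly for right units using $b_g$. For inverses, $g^{-1} := [a_g\, g\, b_g]$ satisfies $g \circ g^{-1} = a_g$ and $g^{-1} \circ g = b_g$ by a short application of (PA) together with (IP). Associativity $(g \circ h)\circ k = g \circ (h \circ k)$ (whenever both sides are defined, which reduces to the matching condition $b_g = a_h$, $b_h = a_k$) is a direct consequence of para-associativity (PA) applied to $[[g b_g h] a_k k] = [g b_g [h a_k k]]$, using that $b_h = a_k$ lets one rewrite the middle arguments appropriately. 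Condition (C) guarantees that $g \circ h$ lies in the correct equivalence class ($a_{g\circ h} = a_g$, $b_{g\circ h} = b_h$), so source and target behave correctly.

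For the converse direction, given a groupoid $\Gamma$ with object set $\Gamma_0$, source $\sigma$ and target $\tau$, I would define $a$ as the fibers of $\tau$, $b$ as the fibers of $\sigma$, and set $[xyz] := xy^{-1}z$ whenever $\tau(x) = \tau(y)$ and $\sigma(y) = \sigma(z)$. Then $(M,a,b,[\ ])$ is straightforwardly a pregroupoid: (PA) is immediate from associativity and the involutivity of inversion in $\Gamma$, (IP) is immediate, and (C) is precisely the compatibility of $\sigma,\tau$ with composition. The set of units $\Gamma_0 \subset \Gamma$ is a bisection since it contains exactly one element $\mathrm{id}_x$ per equivalence class of each of $a$ and $b$. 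Finally, I would check that the two constructions are mutually inverse on the nose: starting with $(M,a,b,s)$, building the groupoid, and then extracting the pregroupoid gives back $(M,a,b,s)$ because $b_g = a_h$ in the original data encodes exactly the groupoid's composability; conversely, starting with a groupoid, the recipe $g \circ h = [g\, b_g\, h] = g b_g^{-1} h = gh$ recovers the original composition since $b_g$ is a unit (equal to its own inverse).

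The main obstacle will be the verification of associativity of the binary product from (PA): one must handle the case distinction on which side is assumed defined and propagate the matching conditions $b_g = a_h, b_h = a_k$ through the para-associative identity $[xy[zuv]] = [x[uzy]v] = [[xyz]uv]$, making careful use of (C) to ensure that each intermediate ternary expression lies in the allowed domain. The bookkeeping is routine but somewhat tedious; once this is in place, the equivalence is a translation exercise.
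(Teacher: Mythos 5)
Your overall strategy coincides with the paper's: the forward direction rests on the three computations (unit laws from (IP), associativity and the inverse law from (PA) combined with (IP)), and the converse is the observation that $[xyz]:=xy^{-1}z$ on a groupoid, with $a,b$ the fibres of the two structure maps, defines a pregroupoid whose set of identities is a bisection. So there is no difference in approach. There is, however, one concrete step that fails as written, and it is exactly the step you single out for verification. You assert that the triple $(g,b_g,h)$ lies in $M\times_a M\times_b M$ ``since $g\sim_b b_g$ and $a_h\sim_a h$''. But $M\times_a M\times_b M=\{(x,y,z)\mid x\sim_a y,\ y\sim_b z\}$, so what is required is $g\sim_a b_g$ and $b_g\sim_b h$; what you checked is membership in $M\times_b M\times_a M$. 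Since $b_g$ is by definition the representative in $s$ of the \emph{$b$-class} of $g$, the relation $g\sim_a b_g$ does not hold in general, and $[g\,b_g\,h]$ is simply undefined. The consistent choice (the one used in the figure accompanying the lemma, where composability of $x$ with $y$ is labelled $a_x=b_y$) is: $g\circ h$ is defined when $a_g=b_h$, and then $g\circ h:=[g\,a_g\,h]=[g\,b_h\,h]$, whose arguments do satisfy $g\sim_a a_g$ and $b_h\sim_b h$.

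The same $a\leftrightarrow b$ swap propagates through the rest of your argument: the left unit at $g$ is $b_g$ (via $[b_g\,b_g\,g]=g$) and the right unit is $a_g$ (via $[g\,a_g\,a_g]=g$) --- your $[e\,e\,g]$ with $e=a_g$ is again outside the domain --- and the inverse law comes out as $g\circ g^{-1}=[g\,a_g\,[a_g\,g\,b_g]]=[[g\,a_g\,a_g]\,g\,b_g]=[g\,g\,b_g]=b_g$ and $g^{-1}\circ g=a_g$, the opposite of what you state. None of this affects the substance: after the single consistent relabelling, your (PA)/(IP) computations, your converse construction, and your check that the two constructions are mutually inverse all go through as intended. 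But as written, the domain verifications do not match the domain you name, so the proof does not close under the conventions you state; you should fix the composability condition first and then rerun the unit and inverse computations.
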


\begin{proof}
$f(gh) = [f a_f [g a_h h]] = [[f a_f g] a_h h] = (fg)h$ follows immediately from para-associativity,
$g a_g = [g a_a a_g]=g$ and $b_g g = [b_g b_g g]= g$ from idempotency,
and
$g \circ g\inv = [ga_g [a_g g b_g]] = [[g a_g a_g] g b_g] = [gg b_g] = b_g$
and $g\inv \circ g = a_g$ from para-associativity and idempotency.
\end{proof}

\nin
In chapter \ref{sec:Bisections} of the present work, the lemma appears as special case of more general results.
Here is an illustration, following Cannas da Silva and Weinstein \cite{CW}.
We prefer to draw $s$ as a curved line, instead of a straight one, in order to stress that, in principle, any bisection
can serve as set of units.
Note that $(x,y\inv,z,xyz)$ forms a parallelogram, and given $x,y\inv,z$, the last vertex does not depend on the
bisection $s$:

\begin{center} 
\newrgbcolor{xdxdff}{0.49 0.49 1}
\psset{xunit=0.5cm,yunit=0.5cm,algebraic=true,dotstyle=o,dotsize=3pt 0,linewidth=0.8pt,arrowsize=3pt 2,arrowinset=0.25}
\begin{pspicture*}(-4.86,-6.33)(18.83,6.85)
\psplot[plotpoints=200]{-9.857135596603504}{28.834528663528886}{0*(x-9.18)^3-0.02*(x-4.18)^2+0.84}
\psline{->}(-1.04,1.96)(2,5)
\psline{->}(-1.04,1.96)(-4.04,4.96)
\psplot{-9.86}{28.83}{(-6.08--3.04*x)/3.04}
\psplot{-9.86}{28.83}{(-18.48--3*x)/-3}
\psplot{-9.86}{28.83}{(-13.76--3.04*x)/3.04}
\psplot{-9.86}{28.83}{(-25.74--3*x)/-3}
\psplot{-9.86}{28.83}{(-22.75--3.04*x)/3.04}
\psplot{-9.86}{28.83}{(-35.16--3*x)/-3}
\psplot{-9.86}{28.83}{(-10.84--3*x)/-3}
\psplot{-9.86}{28.83}{(-38.2--3.04*x)/3.04}
\begin{scriptsize}
\rput[bl](-4.28,-1.44){$s$}
\rput[bl](0.17,3.91){$b$}
\rput[bl](-3.34,3.26){$a$}
\psdots[dotstyle=*,linecolor=blue](4.08,2.08)
\rput[bl](4.05,2.58){\blue{$x$}}
\psdots[dotstyle=*,linecolor=darkgray](-44.45,-46.45)
\psdots[dotstyle=*,linecolor=darkgray](2.81,0.81)
\rput[bl](3.56,0.94){\darkgray{$b_x$}}
\psdots[dotstyle=*,linecolor=darkgray](5.34,0.82)
\rput[bl](5.8,0.78){\darkgray{$a_x=b_y$}}
\psdots[dotstyle=*,linecolor=darkgray](53.02,-46.86)
\psdots[dotstyle=*,linecolor=xdxdff](6.55,2.03)
\rput[bl](6.59,2.54){\xdxdff{$y$}}
\psdots[dotstyle=*,linecolor=darkgray](8.03,0.55)
\rput[bl](8.64,0.45){\darkgray{$b_z=a_y$}}
\psdots[dotstyle=*,linecolor=darkgray](50.33,-41.75)
\psdots[dotstyle=*,linecolor=xdxdff](9.6,2.12)
\rput[bl](9.59,2.74){\xdxdff{$z$}}
\psdots[dotstyle=*,linecolor=darkgray](12.14,-0.42)
\rput[bl](12.75,-0.39){\darkgray{$a_z$}}
\psdots[dotstyle=*,linecolor=darkgray](46.22,-34.5)
\psdots[dotstyle=*,linecolor=darkgray](5.29,3.29)
\rput[bl](5.22,3.91){\darkgray{$xy$}}
\psdots[dotstyle=*,linecolor=darkgray](6.86,4.86)
\rput[bl](6.66,5.57){\darkgray{$xyz$}}
\psdots[dotstyle=*,linecolor=darkgray](8.12,3.6)
\rput[bl](8.09,4.24){\darkgray{$yz$}}
\psdots[dotstyle=*,linecolor=darkgray](10.57,-1.99)
\rput[bl](11.02,-2.05){\darkgray{$z\inv$}}
\psdots[dotstyle=*,linecolor=darkgray](9.36,-3.2)
\rput[bl](9.98,-3.35){\darkgray{$z\inv y\inv$}}
\psdots[dotstyle=*,linecolor=darkgray](8.09,-4.48)
\rput[bl](8.3,-4.59){\darkgray{$z\inv y\inv x\inv$}}
\psdots[dotstyle=*,linecolor=darkgray](5.55,-1.94)
\rput[bl](6.07,-2.11){\darkgray{$y\inv x\inv$}}
\psdots[dotstyle=*,linecolor=darkgray](6.82,-0.66)
\rput[bl](7.21,-0.68){\darkgray{$y\inv$}}
\psdots[dotstyle=*,linecolor=darkgray](4.07,-0.46)
\rput[bl](4.44,-0.62){\darkgray{$x\inv$}}
\end{scriptsize}
\end{pspicture*}
\end{center}

\nin
Summing up, just as for torsors and groups, working with a ternary map has the advantage of using a single map,
containing both inversion and the binary multiplications.
For instance, categorical definitions are greatly simplyfied by viewing groups and groupoids as defined by ternary
maps, and adding a unit element, respectively a bisection, as structure. 
Obviously, there is a forgetful functor forgetting this additional structure.
For general pregroupoids, Kock has constructed an adjoint functor to this forgetful functor, see \cite{Ko87, Ko05, Ko07}.

\subsection{On transitive pregroupoids}\label{ssec:trans}
It is well-known that general groupoids can be decomposed into a disjoint union  of transitive groupoids
(see \cite{CW, Ma05}). 
For pregroupoids, the analog is as follows.

\begin{definition}
Let $(M,a,b,[\quad])$ be a pregroupoid.
Recall (Lemma \ref{la:abcommute}) that the relations $a,b$ commute, and hence
$c:=ab$ is again an equivalence relation on $M$.
The quivalence classes of $c$ are called the {\em connected components} of $(M,a,b,[\quad])$.
We say that the pregroupoid is {\em transitive} if it has just one equivalence class. In other terms,
 the morphism onto the pair pregroupoid
$$
M \to M/a \times M/b, \quad x \mapsto ([x]_a,[x]_b)
$$
(see Example \ref{ex:pairpregroupoid})
is surjective.
\end{definition}

\nin
It is now obvious that every pregroupoid can be decomposed into a disjoint union of transitive pregroupoids.
Moreover, any transitive pregroupoid can be decomposed, in a non-canonical way, after
fixing a base point $([x]_a,[z]_b) \in M/a \times M/b$,
as a product
\begin{equation}
M \cong H \times (M/a \times M/b),
\end{equation}
where,
$H := [x]_a \cap [y]_b$ is  a torsor with product $xy\inv z$, considered as a pregroupoid over a point
(see \cite{Brown},\ p.119 for this item, in the case of groupoids).
Next, since the group $L:=\Bij(E) \times \Bij(F)$ acts transitively by automorphisms of the
pair pregroupoid on $E \times F$, by putting things together, it can be shown  that every transitive pregroupoid is
a quotient of a homogeneous pregroupoid $(G,G/A,B\backslash G)$ (example \ref{ex:homog!}; but of course the group $L$ 
is chosen much too big for everydaylife-situations).
Finally, if the pregroupoid does not admit bisections, by using Kock's construction, we may pass to an
even bigger transitive enveloping groupoid.




\begin{thebibliography}{BeNe05}


\bibitem[Ba29]{Ba29}
Baer, R., {\it Zur Einf\"uhrung des Scharbegriffs}, J.\ Reine u.\ Angew.\ Math.\ {\bf 160} (1929), 199 -- 207.




\bibitem[Be13]{Be13}
Bertram, W.. Jordan Geometries -- An Approach by Inversions, Journal of Lie Theory, to appear. Arxiv:
\url{http://arxiv.org/abs/1308.5888}



\bibitem[Be02]{Be02}
Bertram, W.,
Generalized projective geometries: general theory and equivalence with Jordan structures,
\textit{Adv. Geom.} \textbf{2} (2002), 329--369 (electronic version: preprint 90 at
\url{http://homepage.uibk.ac.at/~c70202/jordan/index.html}).




\bibitem[Be12]{Be12}
Bertram, W.,
The projective geometry of a group,
 \url{http://arxiv.org/abs/1201.6201}


\bibitem[BeKi10a]{BeKi10}
Bertram, W.,  and M.~Kinyon,
Associative Geometries. 
I: Torsors, Linear Relations and Grassmannians,
\textit{Journal of Lie Theory} 20 (2) (2010), 215-252 ,
arXiv : \url{http://arxiv.org/abs/0903.5441}

\bibitem[BeKi10b]{BeKi10b}
Bertram, W.\  and M.~Kinyon,
Associative Geometries. 
II: Involutions, the classical torsors, and their homotopes,
\textit{Journal of Lie Theory} 20 (2) (2010), 253-282 ;
arXiv: \url{http://arxiv.org/abs/0909.4438}.

\bibitem[BeKi12]{BeKi12}
Bertram, W., and M.~Kinyon,
Torsors and ternary Moufang loops arising in projective geometry. to appear.       
        arxiv :                   \url{http://arxiv.org/abs/1206.2222}




\bibitem[Br87]{Brown}
Brown, R.,  From Groups to Groupoids: a brief Survey.
Bull.\ London Math.\ Soc.\ {\bf 19} (1987), 113 -- 134

\bibitem[Co65]{Cohn}
Cohn, P.M., {\it Universal Algebra}, Harper and Row, New York 1965






\bibitem[CW99]{CW}
Cannas da Silva, A., and A.\ Weinstein, {\em Geometric Models
for Noncommutative Algebras}, AMS 1999, 
\url{http://www.math.ist.utl.pt/~acannas/Books/models_final.pdf}


\bibitem[DG70]{DG}
 Demazure, M., and P.\  Gabriel,
 {\em Groupes alg\'ebriques. I}, Masson, Paris 1970. 
 

\bibitem[ES04]{Struth}
Ebert, M., and  G.\ Struth,
Diagram Chase in Relational System Development. Proc. of 3rd IEEE Workshop Visual Languages and Formal Methods (VLFM 04), 
Vol. 127 of ENTCS, 2004.


\bibitem[FS90]{FS90}
Freyd, P., and A.\ Scedrov, {\it Categories, Allegories}, Math.\ Lib.\ {\bf 39}, North Holland (1990).



\bibitem[Gu83]{Gumm}
Gumm, H.P., {\it Geometrical methods in congruence modular algebras},
Mem.\ AMS {\bf 286}, AMS, Providence 1983


\bibitem[Jo91]{Jo91}
Johnstone, P.J., {\it The closed subgroup theorem for localic herds and pregroupoids}, 
J.\ Pure Appl.\ Alg.\ {\bf 70} (1991), 97 -- 106

\bibitem[Jo02]{Jo02}
Johnstone, P.J., {\it Sketches of an Elephant: A Topos Theory Compendium},
Clarendon Press, Oxford 2002


\bibitem[J\'o54]{Jo54}
J\'onsson, B., 
Modular lattices and Desargues' theorem,
Math.\ Scand.\ {\bf 2} (1954), 295--314

\bibitem[Ko82]{Ko82}
Kock, A.,  \textit{The algebraic theory of moving frames},
Cahiers top.\ g\'eo.\ diff.\ cat\'egoriques {\bf 23} (1982), 347 -- 362

\bibitem[Ko87]{Ko87}
Kock, A.,  {\it Generalized Fibre Bundles},
in: Categorical Algebra and its Applications, Springer LNM {\bf 1358}, 1987

\bibitem[Ko05]{Ko05}
Kock, A.,  {\it Pregroupoids and their enveloping groupoids}, 
Aarhus Mathematical Preprint no.\ 3 (2005), cf.
\url{http://arxiv.org/pdf/math/0502075v1.pdf}

\bibitem[Ko07]{Ko07}
Kock, A.,  {\it Principal Bundles, Groupoids, and Connections},
Geometry and Topology on Manifolds, Banach Center Publications {\bf 76},
Warszawa 2007




\bibitem[Lo75]{Lo75}
Loos, O., 
\textit{Jordan Pairs},
Lecture Notes in Math. \textbf{460}, Springer, New York, 1975.


\bibitem[Ma05]{Ma05}
Mackenzie, K.,  {\it General Theory of Lie Groupoids and Lie Algebroids},
Cambridge University Press, Cambridge, 2005.

\bibitem[NS]{NS}
Nagy, P.T., and K.\  Strambach, {\em Loops in Group Theory and Lie Theory}
Walter de Gruyter, 2002. 


\bibitem[Pi77]{Pi77}
Pilz, G.,  
\textit{Near-Rings}, 
North-Holland, Amsterdam 1977.








\end{thebibliography}
\end{document}